\newtheorem{theorem}{Theorem}[section]
\newtheorem{lemma}[theorem]{Lemma}
\newtheorem{corollary}[theorem]{Corollary}
\theoremstyle{definition}
\newtheorem{definition}[theorem]{Definition}
\newtheorem{proposition}[theorem]{Proposition}
\newtheorem{example}[theorem]{Example}
\theoremstyle{remark}
\newtheorem{remark}[theorem]{Remark}
\numberwithin{equation}{section}
\author{Bartosz Naskr\k{e}cki}
\address{Faculty of Mathematics and Computer Science, Adam Mickiewicz University\\
Umultowska 87, 61-614 Poznań, Poland\\
and School of Mathematics, University of Bristol, University Walk, Bristol BS8 1TW, UK}
\email{nasqret@gmail.com} 
\title[Divisibility sequences of polynomials]{Divisibility sequences of polynomials and heights estimates}
\thanks{The author was supported by the National Science Centre Poland research grant 2012/07/B/ST1/03541 and by the DFG grant Sto299/11-1 within the framework of the Priority Programme SPP 1489.}
\keywords{elliptic divisibility sequence, primitive divisor, elliptic surface, height of point}
\subjclass{Primary 11G05; Secondary 11B83 11C08 14H52.}
\newcommand{\summy}{\sum_{\substack{ %
   m\mid n \\
   m<n %
   }}}
\newcommand{\heightE}[1]{\widehat{h}_{E}(#1)}
\newcommand{\pairing}[2]{\left\langle #1,#2\right\rangle}
\newcommand{\dotprod}[2]{\overline{#1}.\overline{#2}}
\DeclareMathOperator{\Div}{Div}
\DeclareMathOperator{\charac}{char}
\DeclareMathOperator{\ord}{ord}
\DeclareMathOperator{\GCD}{GCD}
\DeclareMathOperator{\LCM}{LCM}
\DeclareMathOperator{\Supp}{Supp}
\DeclareMathOperator{\rank}{rank}
\begin{document}

\begin{abstract}
In this note we compute a constant $N$ that bounds the number of non--primitive divisors in elliptic divisibility sequences over function fields of any characteristic. We improve a result of
Ingram--Mah{\'e}--Silverman--Stange--Streng, 2012, and we show that the constant can be chosen independently of the specific point and to some extent of the specific curve, as predicted in loc. cit.
\end{abstract}

\maketitle

\section{Introduction}
Let $E$ be an elliptic curve over the function field $K(C)$ of a smooth projective curve $C$ of genus $g(C)$ over an algebraically closed field $K$. Let $S$ be the Kodaira--N\'{e}ron model of $E$, i.e. a smooth projective surface with a relatively minimal elliptic fibration $\pi:S\rightarrow C$ with a generic fibre $E$ and a section $O:C\rightarrow S$, cf. \cite[\S 1]{Shioda_Mordell_Weil}, \cite[Chap. III, \S 3]{Silverman_book}. We always assume that $\pi$ is not smooth. Let $P$ be a point of infinite order in the Mordell--Weil group $E(K(C))$. To formulate the main problem we define a family of effective divisors $D_{nP}\in\Div(C)$ parametrized by natural numbers $n$. For each $n\in\mathbb{N}$ the divisor $D_{nP}$ is the pullback of the image $\overline{O}$ of section $O$ through the morphism $\sigma_{nP}:C\rightarrow S$ induced by the point $nP$
\[D_{nP}=\sigma_{nP}^{*}(\overline{O}).\]
We call such a family an \emph{elliptic divisibility sequence}. We say that the divisor $D_{nP}$ is \textit{primitive} if the support of $D_{nP}$ is \textit{not} completely contained in the sum of supports of the divisors $D_{mP}$ for all $m<n$. Otherwise we say that the divisor $D_{nP}$ is \emph{non--primitive}.

The study of elliptic divisibility sequences dates back to the work of Morgan Ward \cite{Ward_The_law, Ward_EDS_Memoir}. Silverman in \cite{Silverman_Wieferich} established that for elliptic divisibility sequences over $\mathbb{Q}$ the number of non--primitive divisors is finite. This result was investigated further by several authors \cite{Everest_Ingram_Stevens_Primitive, Everest_Mclaren_Ward_EDS, Ingram_EDS_over_curves, Silverman_Ingram_uniform, Kuhn_Muller, Stange_Elliptic_nets}. In another direction Streng \cite{Streng_Divisibility_CM} generalized the primitive divisor theorems for curves with complex multiplication. Several authors studied also the question of existence of perfect powers in divisibility sequences, cf. \cite{Everest_Ingram_Mahe_Stevens_Uniform_EDS, Everest_Reynolds_On_the_denominators, Reynolds_Perfect}. In the context of elliptic divisibility sequences over function fields the finiteness of the set of non--primitive divisors for elliptic curves over $\mathbb{Q}(t)$ was proved in \cite{Everest_Ingram_Mahe_Stevens_Uniform_EDS}. In parallel such questions have been studied also for Lucas sequences \cite{Flatter_Ward}. In \cite{Silverman_Common_divisors} common divisors of two distinct elliptic divisibility sequences were studied.  For a general function field of a smooth curve in characteristic zero, the first general theorem about primitive divisors in elliptic divisibility sequences was proved in \cite{Silverman_divisibility}. The authors of \cite{Silverman_divisibility} ask the following question: \emph{For a fixed elliptic curve $E$ over a function field and a point $P$ of infinite order is it possible to give an explicit upper bound for the value of a constant $N=N(E,P)$ such that for all $n\geq N$ the divisor $D_{nP}$ in the elliptic divisibility sequence is primitive?}

Such a bound $N(E,P)$ always exists by \cite[Thm. 5.5]{Silverman_divisibility} but the proof does not indicate how to make the bound explicit or uniform with respect to $E$ and $P$.

In this note we investigate the existence of uniform bounds for the number of non--primitive divisors. In Section \ref{sec:main_theorems} we formulate our main theorems. There is a considerable difference between the formulation and proof of theorems in characteristic zero and positive so we do state them separately. In Section \ref{sec:notation} we establish necessary notation that will be used through the paper. In Section \ref{sec:prelimin} we gather basic facts about the canonical height function and the relation between the discriminant divisor of an elliptic curve and the Euler characteristic of the attached elliptic surface. The crux is the explicit recipe for the height function due to Shioda \cite{Shioda_Mordell_Weil}, that will be used in critical places to get the estimate on the number of non--primitive divisors in the divisibility sequence. Section \ref{sec:arithm} contains a couple of properties of arithmetic functions used in the proofs of main theorems. In Section \ref{sec:bounds_heights} we discuss the analogue of Lang's conjecture on canonical height of points over function fields. We use the results of \cite{SilvHindry} and \cite{Pesenti_Szpiro} to produce effective bounds for fields of arbitrary characteristic.

In Section \ref{sec:char_0_section} we explain a relatively simple proof of theorems formulated for function fields of characteristic $0$. The main idea of the proof is to combine the explicit approach to height computations of \cite{Shioda_Mordell_Weil} with the bounds for minimal heights of points proved in \cite{SilvHindry}. A crucial step in the proof relies on the formula that relates the Euler characteristic $\chi(S)$ to the sum of numbers that depended on the Kodaira types of singular fibres of $\pi$.

In Section \ref{sec:positive_char} we prove the main theorems in positive characteristic. The main steps of the proof are similar to the characteristic $0$ case, however there are significant differences due to the presence of inseparable multiplication by $p$ map. In the last section we gather several examples for which we compute explicitly the exact number of non--primitive divisors. We also explain how the main theorems fail in positive characteristic $p$ for elliptic curves with $p$--map of inseparable degree $p^2$.

\section{Main theorems}\label{sec:main_theorems}
Our convention is to work with function fields $K(C)$ over algebraically closed field $K$ of constants. However, the main theorems can be formulated for a smooth, projective geometrically irreducible curve $C$ over a field $K$ that is a number field or a finite field. In such a case, an elliptic curve $E$ is defined over the field $K(C)$ and the elliptic surface $\pi:S\rightarrow C$ attached to $E/K(C)$ is a regular scheme $S$ over $K$ with a proper flat morphism $\pi$ into $C$ and such that its base change to the algebraic closure $\overline{K}$ is an elliptic surface in the usual sense. Every point $v\in C(\overline{K})$ corresponds to a normalized valuation of $\overline{K}(C)$. We say that $v$ is a \emph{primitive valuation} of $D_{nP}$ when $v$ is contained in the support of $D_{nP}$ and does not belong to the support of any $D_{mP}$ for $m<n$, cf. \cite[Def. 5.4]{Silverman_divisibility}. In this terminology we can say that $D_{nP}$ is primitive if and only if it has a primitive valuation and similarly $D_{nP}$ is non--primitive whenever it does not have a primitive valuation.

From now on we assume that $K=\overline{K}$, unless otherwise specified. Let $E$ be an elliptic curve over the field $K(C)$ with at least one fibre of bad reduction and let $P$ be a point of infinite order in $E(K(C))$. Let $\pi:S\rightarrow C$ be an elliptic surface attached to $E$. Consider a divisibility sequence $\{D_{nP}\}_{n\in\mathbb{N}}$. 

\begin{theorem}\label{thm:dep_on_g}
Let $K(C)$ be a field of characteristic $0$. There exists a constant $N=N(g(C))$ which depends only on the genus of $C$, such that for all $n\geq N$ the divisor $D_{nP}$ has a primitive valuation.
\end{theorem}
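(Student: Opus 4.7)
My plan is to combine Shioda's explicit canonical height formula with the function field analogue of Lang's conjecture (the Hindry--Silverman type bound developed in Section~\ref{sec:bounds_heights}), linked by elementary estimates on arithmetic functions recorded in Section~\ref{sec:arithm}. The first step is to translate the non--primitivity hypothesis into a degree inequality. At a place $v$ of good reduction let $N_v$ denote the order of the reduction of $P$ in the fibre $E_v$; then $v\in\Supp(D_{nP})$ precisely when $N_v\mid n$, and standard formal group considerations in characteristic zero give $\ord_v(D_{nP})=\ord_v(D_{N_v P})$. If $D_{nP}$ has no primitive valuation then every such $v$ satisfies $N_v<n$, which, after absorbing the contribution of the finitely many bad reduction places into a term $E_{\mathrm{bad}}(E)$ bounded in terms of the Kodaira types appearing on $\pi$, gives
\[
\deg D_{nP}\;\leq\;\sum_{\substack{d\mid n\\d<n}}\deg D_{dP}+E_{\mathrm{bad}}(E).
\]

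Next I apply Shioda's formula $\heightE{nP}=2\chi(S)+2\deg D_{nP}-\sum_{v} c_v(nP)$, where each $c_v(mP)$ depends only on the component of the singular fibre met by $mP$ and is therefore bounded uniformly in $m$ by a constant depending on the Kodaira type at $v$. Combining with $\heightE{nP}=n^2\heightE{P}$ and the arithmetic inequality
\[
\sum_{\substack{d\mid n\\d<n}}d^2\;\leq\;(\zeta(2)-1)\,n^2
\]
from Section~\ref{sec:arithm}, the first-step inequality reduces to
\[
(2-\zeta(2))\,n^2\,\heightE{P}\;\leq\;C_1\,\tau(n)\,\chi(S)
\]
for an absolute constant $C_1$, with the remaining $E$--dependent data on the right absorbed into $\chi(S)$ via the comparison between the discriminant divisor and the Euler characteristic recalled in Section~\ref{sec:prelimin}. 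Finally I invoke the characteristic--zero Lang type lower bound from Section~\ref{sec:bounds_heights}, giving $\heightE{P}\geq c(g(C))\,\chi(S)$ with a constant $c(g(C))>0$ depending only on the genus of $C$; dividing through by $\chi(S)>0$ leaves $n^2\leq C_2(g(C))\,\tau(n)$, and combining with the elementary bound $\tau(n)\leq 2\sqrt{n}$ produces the desired $n\leq N(g(C))$.

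The main obstacle I anticipate is engineering this final cancellation: the upper bound from the second step and the lower bound on $\heightE{P}$ from the third are each linear in $\chi(S)$, so they have to be written with matched normalisations, and one cannot afford to lose more than a mild $\tau(n)$--type factor on the right --- this is what forces the sharp form of the arithmetic inequality rather than a cruder bound. A secondary delicate point is the bookkeeping of multiplicities $\ord_v(D_{nP})$ in the passage from a support inclusion to a degree inequality; the identity $\ord_v(D_{nP})=\ord_v(D_{N_v P})$ makes this clean in characteristic zero, and it is precisely the failure of this identity that will force the separate positive--characteristic treatment in Section~\ref{sec:positive_char}.
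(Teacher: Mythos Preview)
Your proposal is correct and follows essentially the same route as the paper: non--primitivity $\Rightarrow$ degree inequality via the characteristic--zero identity $\ord_v D_{nP}=\ord_v D_{m(v)P}$, Shioda's formula, the estimate $\sigma_2(n)<\zeta(2)n^2$ together with $\sum_v c_v(kP,kP)\le 3\chi(S)$, and finally the Hindry--Silverman bound $\heightE{P}\ge c(g(C))\chi(S)$ to cancel $\chi(S)$. Two small clean--ups: the extra term $E_{\mathrm{bad}}(E)$ is unnecessary, since the identity $\ord_v D_{nP}=\ord_v D_{m(v)P}$ holds at \emph{all} places in characteristic~$0$ (not only the good ones), so the inequality $\deg D_{nP}\le\sum_{m\mid n,\,m<n}\deg D_{mP}$ holds with no error term; and your display of Shioda's formula is off by a factor of~$2$ relative to the paper's normalisation $\heightE{Q}=\tfrac12\langle Q,Q\rangle$.
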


\begin{theorem}\label{thm:dep_on_chi}
Let $K(C)$ be a field of characteristic $0$. There exists a constant $N=N(\chi(S))$ which depends only on the Euler characteristic of surface $S$, such that for all $n\geq N$ the divisor $D_{nP}$ has a primitive valuation.
\end{theorem}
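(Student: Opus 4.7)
The strategy is to squeeze $\deg D_{nP}$ between two competing estimates for large $n$ and then absorb the remaining dependence via a canonical height lower bound.

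First I would recall the Shioda height formula assembled in Section \ref{sec:prelimin} from \cite{Shioda_Mordell_Weil},
\[
\heightE{P}=2\chi(S)+2(P\cdot O)-\sum_{v}\mathrm{contr}_{v}(P),
\]
and combine it with the identity $\deg D_{nP}=(nP\cdot O)$ and the quadratic growth $\heightE{nP}=n^{2}\heightE{P}$ to obtain
\[
\deg D_{nP}=\tfrac{n^{2}}{2}\heightE{P}-\chi(S)+\tfrac{1}{2}\sum_{v}\mathrm{contr}_{v}(nP).
\]
Each $\mathrm{contr}_{v}$ is non-negative and bounded by a universal multiple of the Euler number $e(F_{v})$, so Noether's formula $e(S)=12\chi(S)$ gives $\sum_{v}\mathrm{contr}_{v}(nP)\le c_{0}\chi(S)$ with $c_{0}$ absolute, and hence $\deg D_{nP}=\tfrac{n^{2}}{2}\heightE{P}+O(\chi(S))$, uniformly in $n$.

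Next, assume $D_{nP}$ has no primitive valuation. For every $v\in\Supp D_{nP}$, the smallest $m$ with $v\in\Supp D_{mP}$ is a proper divisor of $n$, so $v\in\Supp D_{(n/p)P}$ for some prime $p\mid n$. The characteristic-zero hypothesis is used crucially here: at every place $v$ of good reduction the formal parameter satisfies $z(kP)=[k]\bigl(z(P)\bigr)=kz(P)+\cdots$ with the integer $k$ a unit at $v$, so $\ord_{v}$ is preserved by multiplication and $\mathrm{mult}_{v}(D_{nP})=\mathrm{mult}_{v}(D_{(n/p)P})$. At the finitely many bad fibres the multiplicities can only differ by amounts controlled by the component groups $F_{v}^{\#}/F_{v}^{0}$, whose total size is again $O(\chi(S))$. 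Summing yields
\[
\deg D_{nP}\le\sum_{\substack{p\mid n\\ p\text{ prime}}}\deg D_{(n/p)P}+c_{1}\chi(S).
\]
Inserting the first display at levels $n$ and $n/p$, rearranging, and using $\sum_{p\mid n}1=\omega(n)$, I obtain
\[
\tfrac{n^{2}}{2}\heightE{P}\Bigl(1-\sum_{\substack{p\mid n\\ p\text{ prime}}}\tfrac{1}{p^{2}}\Bigr)\le c_{2}\bigl(\omega(n)+1\bigr)\chi(S).
\]
Because $\sum_{p\text{ prime}}1/p^{2}<1$ strictly, the parenthesis is bounded below by a positive absolute constant, and the arithmetic estimate $\omega(n)=O(\log n/\log\log n)$ from Section \ref{sec:arithm} then forces $n^{2}\heightE{P}=O(\chi(S)\log n)$.

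Finally, the function-field Lang bound from Section \ref{sec:bounds_heights}, based on \cite{SilvHindry}, produces a lower estimate $\heightE{P}\ge\gamma(\chi(S))>0$ depending only on $\chi(S)$ for every point $P$ of infinite order. Substituting collapses the previous inequality to $n\le N(\chi(S))$, which is the desired conclusion. The main obstacle is the bad-fibre multiplicity accounting in the second step: one must argue that passage from $nP$ to $(n/p)P$ on the component group at every singular fibre cannot inflate the sum of multiplicities by more than $O(\chi(S))$ uniformly in $n$. Once this is secured, the quadratic gain from Shioda's formula and the strict inequality $\sum_{p}1/p^{2}<1$ carry the proof, and every dependence is absorbed by $\chi(S)$ alone.
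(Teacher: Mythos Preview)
Your overall architecture matches the paper's: express $\deg D_{nP}$ via Shioda's formula, bound it above by a divisor sum when $D_{nP}$ is non-primitive, and close with a canonical-height lower bound. Your use of only the maximal proper divisors $n/p$ (together with $\sum_{p}1/p^{2}<1$) in place of the paper's sum over all proper divisors (together with $\sigma_{2}(n)<\zeta(2)n^{2}$) is a harmless variant; either route produces an inequality of the shape $n^{2}\heightE{P}\le C\,\chi(S)\cdot(\text{slowly growing in }n)$.

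There is, however, a genuine gap in your final step. You invoke ``the function-field Lang bound from Section~\ref{sec:bounds_heights}, based on \cite{SilvHindry}'' to obtain $\heightE{P}\ge\gamma(\chi(S))$. But the Hindry--Silverman estimate (Theorem~\ref{thm:SilvHindryBound}) does \emph{not} give this: it yields $\heightE{P}\ge c\cdot h_{K(C)}(E)=c\cdot\chi(S)$ with $c=10^{-15.5}$ only when $\chi(S)\ge 2(g(C)-1)$, and otherwise $c=10^{-9-23g(C)}$ carries the genus. Fed into your inequality $n^{2}\heightE{P}=O(\chi(S)\log n)$ this produces a bound $N=N(g(C))$, which is Theorem~\ref{thm:dep_on_g}, not Theorem~\ref{thm:dep_on_chi}. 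What the paper actually uses here is Lemma~\ref{lemma:inequality3}, a separate and more elementary estimate $1/\heightE{P}\le 24\cdot 3^{4\chi(S)}$ coming from the Mordell--Weil lattice: the denominator of $\langle P,P\rangle$ divides $\LCM\{|G(F_{v})|\}$, and that lcm is bounded through $12\chi(S)=\sum_{v}e(F_{v})$. This bound is exponentially worse in $\chi(S)$ than Lang-type bounds, but it is the one with no hidden genus dependence; substituting it into your penultimate inequality would repair the argument.

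A minor remark: your separate bad-fibre error term $c_{1}\chi(S)$ is unnecessary. In characteristic~$0$ the formal-group identity $\ord_{v}D_{nP}=\ord_{v}D_{m(v)P}$ holds at \emph{every} place with $\ord_{v}D_{nP}\ge 1$, regardless of the reduction type, so one has $D_{nP}\le\sum_{p\mid n}D_{(n/p)P}$ on the nose.
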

Proofs of both theorems are presented in Section \ref{sec:char_0_section}. 

Now let us assume that $p=\charac K(C)\geq 5$. Let $p^{r}$ be the inseparable degree of the $j$--map of $E$ if $j$ is non--constant, otherwise we put $1$. Let us assume that the multiplication by $p$--map has inseparable degree $p$. We say that $E$ is tame when locally at all places the valuation of the leading term of the formal group homomorphisms $\widehat{[p]}$ is less than $p$. Otherwise we say that $E$ is wild, cf. Definition \ref{def:tame_wild_def}. Both assumptions imply that $E$ is ordinary or in other words that it has ordinary reduction at all places, cf. Section \ref{sec:positive_char}.

\begin{theorem}[Theorem \ref{thm:main_theorem_tame}]
Assume that $E$ is ordinary and tame. There exists an explicit constant $N=N(g(C),p,r)$ which depends only on the genus of $C$, $p$ and $r$ such that for all $n\geq N$ the divisor $D_{nP}$ has a primitive valuation.
\end{theorem}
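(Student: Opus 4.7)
The plan is to adapt the characteristic zero strategy, absorbing the new complications introduced by the inseparable multiplication-by-$p$ isogeny on $E$.

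First, I would translate non-primitivity into a divisor inequality. If $D_{nP}$ is non--primitive, then every valuation in its support lies in the support of some $D_{mP}$ with $m\mid n$ and $m<n$, and bounding local multiplicities by those of the divisors at divisor indices $m \mid n$ yields
\[
\deg D_{nP}\leq \summy \deg D_{mP}.
\]
Combining this with Shioda's explicit height formula
\[
\heightE{nP}=2\chi(S)+2(nP\cdot O)-\sum_{v}c_{v}(nP),
\]
the identity $(nP\cdot O)=\deg D_{nP}$, and the homogeneity $\heightE{nP}=n^{2}\heightE{P}$, and then substituting the analogous formula for each $\deg D_{mP}$ on the right-hand side, I would obtain a master inequality of the shape
\[
n^{2}\,\heightE{P}\leq \heightE{P}\summy m^{2} + C_{1}\bigl(g(C),p,r\bigr),
\]
where $C_{1}$ absorbs the $\chi(S)$ contributions (bounded in terms of $g(C), p, r$ via the discriminant of $E$, $p^{r}\mid\deg j_{E}$, and Riemann--Hurwitz applied to the $j$-map $C\to\mathbb{P}^{1}$) together with the local correction terms $c_{v}$, each of which is bounded by an explicit constant depending only on the Kodaira type of the fibre.

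Second, the arithmetic function estimates from Section \ref{sec:arithm} show that $\summy m^{2}\leq \alpha\, n^{2}$ for some explicit $\alpha<1$ uniform in $n$, so the master inequality rearranges to
\[
(1-\alpha)\,n^{2}\,\heightE{P}\leq C_{1}\bigl(g(C),p,r\bigr).
\]
Applying the Lang--type lower bound $\heightE{P}\geq c(g(C),p,r)>0$ coming from the Hindry--Silverman and Pesenti--Szpiro estimates discussed in Section \ref{sec:bounds_heights}, both factors on the left are positive and depend only on the stated geometric invariants. Solving for $n$ produces the explicit threshold $N=N(g(C),p,r)$ claimed in the theorem.

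The delicate step, and the one I expect to be the main obstacle, is the propagation of local valuations through the inseparable $[p]$--map whenever $p\mid n$. In that case $nP$ factors through $[p]$ and the local intersection $(nP\cdot O)_{v}$ at a place $v$ is governed by the formal group homomorphism $\widehat{[p]}(T)=a_{v}T^{p}+\ldots$. The tameness hypothesis $\ord_{v}(a_{v})<p$ is exactly what ensures that iterating $\widehat{[p]}$ does not inflate the local valuation of $T(nP)$ beyond a controlled rate depending only on $p$, so that both the Lang--type lower bound and the local contribution estimates from the first paragraph propagate cleanly through the $p$--part of $n$. The parameter $r$ enters through the fact that the $j$--map factors through $r$--fold Frobenius, which is what rigidifies the relation between $\chi(S)$, $\deg j_{E}$ and $g(C)$ used in bounding $C_{1}$. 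Without the tameness control, the iterated composition $\widehat{[p^{k}]}$ could yield unbounded inflation of local valuations, and the local correction terms would dominate Shioda's formula, breaking the final inequality; this is precisely the mechanism that must fail in the inseparable--degree $p^{2}$ counterexamples promised for the closing section of the paper.
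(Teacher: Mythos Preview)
Your proposal has two genuine gaps that break the argument.

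First, the divisor inequality $\deg D_{nP}\leq \summy \deg D_{mP}$ is false in characteristic $p$ when $p\mid n$. In characteristic zero one has $\ord_{v}D_{nP}=\ord_{v}D_{m(v)P}$, but here the formal group gives $\ord_{v}D_{nP}=p^{e}\ord_{v}D_{m(v)P}+(\textrm{correction})$ with $e=v_{p}(n/m(v))$; see Lemma~\ref{lemma:order_char_p}. The correct inequality is therefore
\[
D_{nP}\leq \summy p^{v_{p}(n/m)}D_{mP}+W(E,P,n),
\]
which forces the weighted functions $\sigma_{2}^{(p)}$ and $d^{(p)}$ into the master inequality and introduces the extra term $C_{3}(n,p,P)=\deg W(E,P,n)$. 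The tameness hypothesis is used precisely to bound $\deg W(E,P,n)\leq (p^{e}-1)\chi(S)$ (Lemma~\ref{lemma:tame_case}); it does not make the characteristic-zero inequality valid. Moreover, with the correct weights the coefficient of $n^{2}\heightE{P}$ on the right becomes $(1+1/p)\zeta(2)-1$, and one needs $\theta(p)=2-(1+1/p)\zeta(2)>0$, which is exactly why $p>3$ is assumed.

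Second, you try to bound $\chi(S)$ absolutely in terms of $g(C),p,r$ via Riemann--Hurwitz for the $j$-map. This is impossible: for fixed $g(C),p,r$ the Euler characteristic $\chi(S)$ is unbounded (e.g.\ already over $\mathbb{P}^{1}$ with separable $j$). The paper instead uses that the correction terms $C_{2}(n,p,P)$ and $C_{3}(n,p,P)$ are each bounded by explicit multiples of $\chi(S)$ times a function of $n$, and then invokes the Lang-type bound $\heightE{P}\geq c(g(C),p,r)\cdot h_{K(C)}(E)=c(g(C),p,r)\cdot\chi(S)$ from Lemma~\ref{lem:height_est_char_p_1}, so that the \emph{ratio} $\chi(S)/\heightE{P}$ is bounded by $10^{18p^{r}}$ or $10^{36g(C)p^{r}}$. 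The $\chi(S)$ factors cancel; they are never bounded on their own. Your constant $C_{1}$, as written, would have to depend on $\chi(S)$ and on $n$, and the final step would not close.
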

\begin{theorem}[Theorem \ref{thm:theorem_wild_case}]
Let $E$ be an elliptic curve defined over $K(C)$ of characteristic $p>3$ with field of constants $K=\mathbb{F}_{q}$, $q=p^{s}$. Let $E$ be ordinary and wild. There exists an explicit constant $N=N(g(C),\chi(S),p,r,s)$ which depends only on the genus of $C$, Euler characteristic $\chi(S)$, $p$, $r$ and $s$ such that for all $n\geq N$ the divisor $D_{nP}$ has a primitive valuation.
\end{theorem}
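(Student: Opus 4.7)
The plan is to follow the blueprint of the characteristic-zero proof of Section \ref{sec:char_0_section} and of the tame theorem above, adapting it to control the extra contributions coming from places where the formal group homomorphism $\widehat{[p]}$ has leading-term valuation at least $p$. As in characteristic $0$, the backbone is Shioda's height-pairing formula, which relates $\heightE{nP}=n^{2}\heightE{P}$ to $\deg D_{nP}$, up to $2\chi(S)$ and an explicit rational sum of local contributions depending only on the Kodaira types of the singular fibres. Combined with the discriminant--Euler-characteristic identities recalled in Section \ref{sec:prelimin}, this gives a two-sided comparison between $\deg D_{nP}$ and $n^{2}\heightE{P}$ with error bounded by an explicit linear function of $\chi(S)$.

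Assume now that $D_{nP}$ has no primitive valuation, so that $\Supp D_{nP}\subseteq\bigcup_{m\mid n,\,m<n}\Supp D_{mP}$. The new feature compared with the tame case is that at a wild place the $p$-power iterates of $P$ inside the formal group have their valuations multiplied by an integer $\geq p$ rather than preserved, so that a valuation primitive for $D_{mP}$ contributes to $D_{nP}$ with its value rescaled by a factor of order $p^{\ord_{p}(n/m)}$. Writing $n=p^{e}n'$ with $\gcd(n',p)=1$, I would first bound the wild excess at each place by an explicit function of $p$ and the local leading-term valuation of $\widehat{[p]}$, and then bound the total wild excess globally by $\chi(S)$ via the conductor--discriminant relation. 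The finite-field assumption $K=\mathbb{F}_{q}$ enters precisely here, to control the set of places where wildness can appear in terms of the Hasse invariant, and this is where the dependence on $s$ is introduced.

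Combining these local estimates with the divisor-sum inequalities of Section \ref{sec:arithm} produces, under the non-primitivity assumption, an inequality of the form
\[
n^{2}\heightE{P}\leq \summy m^{2}\heightE{P}+C(g(C),\chi(S),p,r,s),
\]
and applying the effective Lang-type lower bound $\heightE{P}\geq c(g(C),\chi(S),p,r,s)>0$ from Section \ref{sec:bounds_heights} (which in positive characteristic rests on the Pesenti--Szpiro inequality) yields a contradiction as soon as $n$ exceeds an explicit threshold of the claimed shape $N(g(C),\chi(S),p,r,s)$.

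The step I expect to be the main obstacle is the wild local analysis. Unlike the tame setting, where $\widehat{[p]}$ essentially rescales formal parameters by a factor strictly less than $p$ uniformly, in the wild case the valuations of the coordinates of $p^{e}P$ can jump by amounts governed by the Newton polygon of $\widehat{[p]}$, and a priori these jumps may grow with $e$. Showing that the aggregate wild excess is controlled globally by $\chi(S)$, rather than accumulating to a quantity depending on $P$ itself, is precisely what forces the appearance of $\chi(S)$ in the constant $N$, in contrast with Theorem \ref{thm:main_theorem_tame}, where $\chi(S)$ does not appear.
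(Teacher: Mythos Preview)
Your overall architecture matches the paper's: reduce to a height inequality via Shioda's formula, isolate the wild excess $W(E,P,n)$, bound it, then finish with the positive-characteristic Lang-type bound from Section~\ref{sec:bounds_heights}. But there is a genuine gap in the wild local step, and it is exactly where you misidentify the role of the finite-field hypothesis.

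You propose to ``bound the wild excess at each place by an explicit function of $p$ and the local leading-term valuation of $\widehat{[p]}$'' and then sum globally via the conductor--discriminant relation. That is precisely what works in the \emph{tame} case, where $f(E,P,n,v)\leq \frac{p^{e}-1}{p-1}h_{E,v}$ and summing gives $(p^{e}-1)\chi(S)$ by Lemma~\ref{lem:Valutions_of_h_E_v}. In the wild case this fails: Lemma~\ref{lemma:order_char_p} only gives $\delta(e)\leq p^{2e}m(v)^{2}\heightE{P}+\tfrac{1}{2}\chi(S)-p^{e}$, so the local excess genuinely depends on the point $P$ through the integer $m(v)$, the first index at which $v$ enters the support. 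The global relation $\sum_{v}h_{E,v}=(p-1)\chi(S)$ bounds only the \emph{number} of wild places, not the size of $m(v)$ at those places.

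The finite-field assumption $K=\mathbb{F}_{q}$ is \emph{not} used to control the set of wild places (that is already done by Lemma~\ref{lem:Valutions_of_h_E_v} over any base). It is used to bound $m(v)$ itself: at a wild place $v$ of good reduction, the reduced point $P_{v}$ lies in the finite group $E_{v}(\mathbb{F}_{q^{\deg v}})$, and $m(v)$ is its order. Hasse--Weil then gives $m(v)\leq(\sqrt{q^{\deg v}}+1)^{2}$, while $\deg v\leq (p-1)\chi(S)$ because $h_{E,v}\leq (p-1)\chi(S)$. This is where both $s$ and the extra $\chi(S)$-dependence enter. With $m(v)$ so bounded, the factor $m(v)^{2}\heightE{P}$ in the wild excess becomes (constant)$\cdot\heightE{P}$, which then cancels against the $\heightE{P}$ on the left of the fundamental inequality rather than being killed by a lower height bound. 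Your displayed inequality with a single additive constant $C$ therefore oversimplifies both the $p^{v_{p}(n/m)}$ weighting in $\sigma_{2}^{(p)}(n)$ and the shape of the error term; the actual endgame is an inequality of the form $\theta(p)n^{2}\leq \frac{\chi(S)}{\heightE{P}}(\alpha\, d(n)+\beta)p^{e}+\gamma p^{e}$ with $\alpha,\beta,\gamma$ depending on $\chi(S),p,s$.
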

When the multiplication by $p$ map is of inseparable degree $p^2$ we can find examples of curves with infinitely many non--primitive divisors in the divisibility sequence. They are discussed in Section \ref{sec:Examples}.

\section{Notation}\label{sec:notation}
\begin{itemize}
 \item $\chi(S)$ -- the Euler characteristic $\chi(S,\mathcal{O}_{S})$ of a surface $S$
 \item $g(C)$ -- the genus of a curve $C$
 \item $K(C)$ -- the function field of a curve $C$ over a field of constants $K$; the field $K$ will usually be algebraically closed, unless otherwise specified
 \item $E$ -- an elliptic curve over $K(C)$
 \item $j$ -- the $j$-invariant of $E$
 \item $\Delta_{E}$ -- the minimal discriminant divisor of $E$
 \item $\heightE{P}$ -- the canonical height of a point $P$
 \item $h_{K(C)}(E)$ -- the height of $E$ defined to be $h_{K(C)}(E)=\frac{1}{12}\deg\Delta_{E}$
 \item $\{D_{nP}\}_{n\in\mathbb{N}}$ -- a divisibility sequence attached to a point $P$
\end{itemize}

\section{Preliminaries}\label{sec:prelimin}
We will use the notation similar to that in \cite{Shioda_Mordell_Weil}. By $\langle\cdot,\cdot\rangle:E(K(C))\times E(K(C))\rightarrow \mathbb{Q}$ we denote the symmetric bilinear pairing on $E(K(C))$ which induces the structure of a positive--defined lattice on $E(K(C))/E(K(C))_{\textrm{tors}}$, cf. \cite[Thm. 8.4]{Shioda_Mordell_Weil}. The pairing $\langle\cdot,\cdot\rangle$ induces the height function $P\mapsto \langle P,P\rangle$ which corresponds to the canonical height. For a point $P\in E(K(C))$ we denote by $\overline{P}$ the image of its associated section $\sigma_{P}:C\rightarrow S$ in the given elliptic surface model. By $C_{1}.C_{2}$ we denote the intersection pairing of two curves $C_{1}$, $C_{2}$ lying on $S$. We denote by $G(F_{v})$ the group of simple components of the fibre $F_{v}=\pi^{-1}(v)$ above $v\in C$. In Figure \ref{fig:group_components}, following \cite[Chap. IV, \S 9]{Silverman_book}, we present all possible group structures of $G(F_{v})$ corresponding to different Kodaira types of singular fibres $F_{v}$. We denote by $B$ the set of all places $v\in C$ of bad reduction.
\begin{figure}[htb]
\begin{align*}
G(I_{n})&\cong \mathbb{Z}/n\\
G(I_{2m}^{*})&\cong (\mathbb{Z}/2)^2\\
G(I_{2m+1}^{*})&\cong (\mathbb{Z}/4)\\
G(II)\cong G(II^{*})&\cong \{0\}\\
G(III)\cong G(III^{*})&\cong \mathbb{Z}/2\\
G(IV)\cong G(IV^{*})&\cong \mathbb{Z}/3
\end{align*}
\caption{Group of components of fibre with a certain Kodaira type}\label{fig:group_components}
\end{figure}

\begin{figure}[htb]
$\begin{array}{c|cccccc}
  \textrm{type of }F_{v} & III & III^{*} & IV & IV^{*} & I_{b}\ (b\geq 2) & I_{b}^{*}\ (b\geq 0)\\
  \hline
 \begin{array}{c}
 c_{v}(P),\\
 i=\mathop{comp}_{v}(P)
 \end{array} & 1/2 & 3/2 & 2/3 & 4/3 & i(b-i)/b & \left\{\begin{array}{ll}1 & (i=1)\\ 1+b/4 & (i>1)\end{array}\right.\\
 \hline
 \begin{array}{c}
  c_{v}(P,Q),\\
  i=\mathop{comp}_{v}(P),\\
  j=\mathop{comp}_{v}(Q),\\
  i<j
 \end{array} & - & - & 1/3 & 2/3 & i(b-j)/b & \left\{\begin{array}{ll}1/2 & (i=1)\\ 2+b/4 & (i>1)\end{array}\right.
\end{array}$
\caption{Values of correcting terms $c_{v}(P,Q)$ for all possible singular fibre types with at least two components}\label{fig:corrections}
\end{figure}
By \cite[(2.31)]{Shioda_Mordell_Weil} it is possible to write the height pairing in terms of explicit numbers. We denote by $c_{v}(P,Q)$ the correcting terms that are determined by computation of intersection of curves $\overline{P}$ and $\overline{Q}$ in the fibre above $v$, cf. Figure \ref{fig:corrections} reproduced from \cite[8.16]{Shioda_Mordell_Weil}. The values $c_{v}(P,Q)$ depend on the numbering of components in the fibre above $v$. For a point $P$ we denote by $comp_{v}(P)$ the component above $v$ that intersects the curve $\overline{P}$. For a fibre $F_{v}$ above $v$ we only label the simple components. The unique component that intersects the image of the zero section $\overline{O}$ is denoted by $\Theta_{v,0}$ and we put $comp_{v}(P)=0$ if the image $\overline{P}$ intersects $\Theta_{v,0}$. For the fibres of type $I_{n}$ with $n>1$ we put labels $\Theta_{v,0}, \Theta_{v,1}$, \ldots, $\Theta_{v,n-1}$ cyclically, fixing one of two possible choices. For $F_{v}$ of type $I_{n}^{*}$ we denote by $\Theta_{v,1}$ the component which intersects the same double component as $\Theta_{v,0}$. The other two simple components $\Theta_{v,2}$ and $\Theta_{v,3}$ are labelled in an arbitrary way. For the other additive reduction types we choose one fixed labelling (the order is irrelevant). For two points $P$ and $Q$ we put $c_{v}(P,Q)=0$ whenever $comp_{v}(P)=0$ or $comp_{v}(Q)=0$. The non--trivial cases are described in Figure \ref{fig:corrections}.  In \cite[Thm. 8.6]{Shioda_Mordell_Weil} it is proved that
\[\left\langle P,Q \right\rangle = \chi(S)+\overline{P}.\overline{O}+\overline{Q}.\overline{O}-\overline{P}.\overline{Q}-\sum_{v\in B}c_{v}(P,Q).\]
In particular we have the equality
\begin{equation}\label{eq:Shioda_height}
\pairing{P}{P} = 2\chi(S)+2\dotprod{P}{O}-\sum_{v\in B}c_{v}(P,P)
\end{equation}
The notion of canonical height from \cite[\S 1]{SilvHindry} is slightly different from the notion of the height determined by $\langle\cdot,\cdot\rangle$. In fact the first is defined by the limit
\[\widehat{h}_{E}(P)=\lim_{n\rightarrow\infty}\frac{\deg \sigma^{*}_{nP}\overline{O}}{n^2}.\]
using our notation. By \cite[Chap. III Thm. 9.3]{Silverman_book} the following equality holds
\begin{equation}\label{eq:height_def_comparision}
\widehat{h}_{E}(P)=\frac{1}{2}\langle P,P\rangle.
\end{equation}
\noindent
We also remark that $\deg \sigma^{*}_{nP}\overline{O}=\deg D_{nP}=\overline{nP}.\overline{O}$ which clearly follows from the definition.

For a fibre above $v$ let us denote by $m_{v}$ the number of irreducible components in $F_{v}$.
For the fibre $F_{v}=\pi^{-1}(v)$ with $m_{v}$ components the Euler number $e(F_{v})$ (cf. \cite[Prop. 5.1.6]{Dolgachev}) equals $0$ at $v$ of good reduction, $m_{v}$ at places $v$ of bad multiplicative reduction and $m_{v}+1$ at places of bad additive reduction. 
\[e(F_{v}) = \left\{
  \begin{array}{ll}
    0 & v\textrm{ has good reduction}\\
    m_{v} & v\textrm{ has multiplicative reduction}\\
    m_{v}+1 & v\textrm{ has additive reduction}
  \end{array}
\right.\]
By \cite[Thm. 2.8]{Shioda_Mordell_Weil} it follows that the square $K_{S}^{2}$ of the canonical bundle $K_{S}$ is $0$ and by Noether's formula \cite[Chap. V, Rem. 1.6.1]{Hartshorne} and \cite[Prop. 5.1.6]{Dolgachev}
\begin{equation}\label{eq:euler_formula}
12\chi(S)=e(S)=\sum_{v\in B}(e(F_{v})+\delta_{v}).
\end{equation}
The terms $\delta_{v}$ are non--negative and non-zero only in the special cases of $\textrm{char}\:K=2,3$.

We denote by $\Delta_{E}$ the sum $\sum_{v\in C}(\ord_{v}\Delta_{v}) \: (v)$ where $\ord_{v}\Delta_{v}$ is the order of vanishing of the minimal discriminant $\Delta_{v}$ of $E$ at $v$.
On the other hand by Tate's algorithm \cite{Tate_algorithm_article} $e(F_{v})$ equals $\ord_{v}\Delta_{v}$  when characteristic $p$ equals $0$ or is greater than $3$. This implies the equalities
\[h_{K(C)}(E)=\frac{1}{12}\deg\Delta_{E}=\frac{1}{12}\sum_{v\in C}(\ord_{v}\Delta_{v})\:  (v)=\frac{1}{12}e(S)=\chi(S).\]

\section{Arithmetic functions}\label{sec:arithm}
We will use further two arithmetical functions: 
\begin{align*}
d(n)&=\sum_{m\mid n} 1,\\
\sigma_{2}(n)&=\sum_{m \mid n} m^2.
\end{align*}
For the applications in Section \ref{sec:char_0_section} it is often enough to use the trivial bound $d(n)\leq n$. However, for the applications in Section \ref{sec:positive_char} a stronger bound \cite{Nicolas_Robin} is required
\begin{equation}\label{eq:d_bound}
d(n)\leq n^{1.5379\log 2/\log\log n}\quad\textrm{ for }n\geq 3.
\end{equation}
We easily obtain the following estimate
\begin{align*}
\sigma_{2}(n)&=\sum_{m\mid n} m^2 = n^2\prod_{p^{\alpha}\mid\mid n} (1+p^{-2}+\ldots+p^{-2\alpha})\\
&\leq n^2 \prod_{p\mid n} (1+p^{-2}+\ldots)=n^2\prod_{p\mid n}\left(\frac{1}{1-p^{-2}}\right)\\
&\leq n^2\prod_{p}\left(\frac{1}{1-p^{-2}}\right)=n^2\zeta(2)<n^2\cdot 1.645
\end{align*}
It implies that for any $n>0$ we have
\begin{equation}\label{eq:sigma_2_estimate}
\sigma_{2}(n)< \zeta(2)n^2< 1.645n^2.
\end{equation}
For a fixed prime number $p$ we define also functions 
\begin{align*}
d^{(p)}(n)&=\sum_{m\mid n}p^{v_{p}(n/m)},\\
\sigma_{2}^{(p)}(n)&=\sum_{m\mid n}p^{v_{p}(n/m)} m^2.
\end{align*}
We denote by $v_{p}(n)$ the standard $p$--adic valuation of $n$ at $p$.
\begin{proposition}\label{prop:arithm_estimates}
The functions $\sigma_{2}^{(p)}(n)$ and $d^{(p)}(n)$ are multiplicative and they satisfy:
\begin{itemize}
 \item $d^{(p)}(n)=\frac{p^{e+1}-1}{(e+1)(p-1)}\cdot d(n)$
 \item $\sigma_{2}^{(p)}(n)=\frac{p^{e}(p+1)}{p^{e+1}+1}\sigma_{2}(n)<(1+\frac{1}{p})\zeta(2)n^2$
\end{itemize}
where $n=n_{0}p^{e}$, $p\nmid n_{0}$ and $e=v_{p}(n)$.
\end{proposition}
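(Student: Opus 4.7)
The plan is to establish multiplicativity first, then compute both functions on prime powers separately at the special prime $p$ and at other primes, and finally combine the results using multiplicativity.

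For multiplicativity, I would take coprime $a,b$ with $n=ab$ and write any divisor $m\mid n$ uniquely as $m=m_1m_2$ with $m_1\mid a$, $m_2\mid b$. Since $\gcd(a,b)=1$, at most one of $a,b$ is divisible by $p$, so $v_p(n/m)=v_p(a/m_1)+v_p(b/m_2)$ and $p^{v_p(n/m)}=p^{v_p(a/m_1)}\cdot p^{v_p(b/m_2)}$. Since $m^2=m_1^2 m_2^2$, both sums factor and multiplicativity of $d^{(p)}$ and $\sigma_2^{(p)}$ follows in the same stroke as the familiar argument for $d$ and $\sigma_2$.

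Next, I would evaluate both functions at prime powers. For a prime $q\neq p$ and any divisor $m=q^i$ of $q^k$, one has $v_p(q^k/m)=0$, so $d^{(p)}(q^k)=d(q^k)$ and $\sigma_2^{(p)}(q^k)=\sigma_2(q^k)$. At $p^e$ itself, with $m=p^i$,
\[
d^{(p)}(p^e)=\sum_{i=0}^{e} p^{e-i}=\frac{p^{e+1}-1}{p-1},\qquad
\sigma_2^{(p)}(p^e)=\sum_{i=0}^{e} p^{e-i}p^{2i}=p^e\cdot\frac{p^{e+1}-1}{p-1}.
\]
Dividing by $d(p^e)=e+1$ and $\sigma_2(p^e)=(p^{2e+2}-1)/(p^2-1)=(p^{e+1}-1)(p^{e+1}+1)/((p-1)(p+1))$ respectively gives the ratios $(p^{e+1}-1)/((e+1)(p-1))$ and $p^e(p+1)/(p^{e+1}+1)$. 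Writing $n=n_0 p^e$ with $p\nmid n_0$ and using multiplicativity, these ratios carry over to $d^{(p)}(n)/d(n)$ and $\sigma_2^{(p)}(n)/\sigma_2(n)$, yielding the two displayed formulas.

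The remaining inequality is elementary: from $p^{e+1}\le p^{e+1}+1$ we get
\[
\frac{p^e(p+1)}{p^{e+1}+1}\le \frac{p+1}{p}=1+\frac{1}{p},
\]
and combining with the estimate $\sigma_2(n)<\zeta(2)n^2$ from \eqref{eq:sigma_2_estimate} produces $\sigma_2^{(p)}(n)<(1+1/p)\zeta(2)n^2$. There is no real obstacle here; the only mildly delicate point is the bookkeeping in the multiplicativity argument to confirm that the weight $p^{v_p(n/m)}$ respects the decomposition, but this is immediate once one notes that $p$ divides at most one of the coprime factors.
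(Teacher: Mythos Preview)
Your proof is correct and follows essentially the same route as the paper: establish multiplicativity, evaluate at prime powers, and combine. The paper compresses the multiplicativity step by observing that $d^{(p)}$ and $\sigma_2^{(p)}$ are Dirichlet convolutions of the multiplicative function $f(n)=p^{v_p(n)}$ with the constant function $1$ and with $m\mapsto m^2$, respectively, and then declares the remaining formulas an easy exercise; your version simply carries out that exercise explicitly.
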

\begin{proof}
Put $f(n)=p^{v_{p}(n)}$. We observe that $d^{(p)}(n)$ is the Dirichlet convolution of $d(n)$ with $f(n)$. Similarly $\sigma_{2}^{(p)}(n)$ is a convolution of $f(n)$ with $\sigma_{2}(n)$. The multiplicativity follows and the rest is an easy exercise.
\end{proof}

\section{Bounds on the canonical height}\label{sec:bounds_heights}
In this section we collect together certain lower bounds on canonical height $\heightE{P}$ of a point of infinite order. The first presented bound is slightly weaker than the analogue of Lang's conjecture \cite{SilvHindry} but its proof relies entirely on the theory of Mordell--Weil lattices and the outcome does not depend on the characteristic of the field $K(C)$.
\begin{lemma}\label{lemma:inequality3}
Assume $E$ is an elliptic curve over $K(C)$. Let $P$ be a point of infinite order in $E(K(C))$. Then 
\begin{equation*}
1/\heightE{P}\leq 24\cdot 3^{4\chi(S)}.
\end{equation*}
\end{lemma}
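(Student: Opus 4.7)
The plan is to exploit the positivity of the height pairing on $E(K(C))/E(K(C))_{\text{tors}}$ together with the explicit Shioda formula \eqref{eq:Shioda_height} and the identification \eqref{eq:height_def_comparision}. Since $P$ has infinite order, $\pairing{P}{P}$ is a positive rational number; the goal is to show that the denominator of this rational number is at most $3^{4\chi(S)}$, which will immediately yield $1/\heightE{P}\leq 2\cdot 3^{4\chi(S)}$, comfortably within the claimed bound.

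To this end I would write
\[\pairing{P}{P} \;=\; 2\chi(S)+2\dotprod{P}{O}-\sum_{v\in B} c_v(P,P)\]
and observe that $2\chi(S)$ and $2\dotprod{P}{O}$ are integers, so the denominator of $\pairing{P}{P}$ in lowest terms equals the denominator of $\sum_{v\in B} c_v(P,P)$ in lowest terms. Reading off Figure \ref{fig:corrections}, the denominator of $c_v(P,P)$ divides an integer $d_v$ determined by the Kodaira type of $F_v$: $d_v\mid b$ for $I_b$, $d_v\mid 4$ for $I_b^*$, $d_v\mid 3$ for $IV,IV^*$, $d_v\mid 2$ for $III,III^*$, and $d_v=1$ for $II,II^*$. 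A short case-by-case check then establishes the elementary inequality $d_v\leq 3^{e(F_v)/3}$ in every type; the tightest cases are $I_2$ (where $3^{2/3}\approx 2.08>2$) and $I_3$ (equality), while all additive types $I_b^*$, $III^{(*)}$, $IV^{(*)}$ are dominated very comfortably.

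Multiplying over all bad fibres and invoking \eqref{eq:euler_formula}, which in particular implies $\sum_{v\in B} e(F_v)\leq 12\chi(S)$, one obtains
\[\prod_{v\in B} d_v \;\leq\; 3^{\frac{1}{3}\sum_{v\in B} e(F_v)} \;\leq\; 3^{4\chi(S)},\]
so $\pairing{P}{P}\geq 3^{-4\chi(S)}$ and the conclusion follows. The main obstacle I anticipate is not any single deep step but the delicate case analysis for the $I_b$ fibres: the naive bound $b\leq 3^b$ yields only an exponent $12\chi(S)$, so one really needs the sharper $b\leq 3^{b/3}$ in order to funnel Noether's identity into the desired exponent $4\chi(S)$. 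The constant $24$ appearing in the statement is not sharp; it comfortably absorbs any slack needed to accommodate the non-negative $\delta_v$ correction terms in \eqref{eq:euler_formula} arising in characteristics $2$ and $3$, so that the bound remains genuinely independent of $\mathrm{char}\,K(C)$.
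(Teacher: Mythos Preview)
Your argument is correct and runs along the same rails as the paper's: both observe that $\pairing{P}{P}$ is a positive rational whose denominator is controlled by the bad fibres, both use the pivotal inequality $n\le 3^{n/3}$, and both feed \eqref{eq:euler_formula} into the exponent to obtain $4\chi(S)$. The only organizational difference is that the paper invokes Shioda's lattice result to get $\pairing{P}{P}\ge 1/\LCM_{v}\lvert G(F_v)\rvert$ and then pulls out a factor $12$ for the additive component groups before bounding the multiplicative ones, whereas you bound every fibre uniformly by $d_v\le 3^{e(F_v)/3}$ directly from the table of correction terms; this is a cosmetic repackaging, though it has the pleasant side effect of delivering the sharper constant $1/\heightE{P}\le 2\cdot 3^{4\chi(S)}$ in place of $24\cdot 3^{4\chi(S)}$.
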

\begin{proof}
If $P$ is a point of infinite order in $E(K(C))$, then the height $\langle P,P\rangle$ is positive. More precisely if we put
\[m=LCM(\{|G(F_{v})|:v\in B\})\]
then $\langle P,P\rangle \geq 1/m$ by \cite[Lem. 8.3]{Shioda_Mordell_Weil} and \cite[Thm. 8.4]{Shioda_Mordell_Weil}. The quantity $1/\langle P,P\rangle$ is bounded from above by $LCM(\{|G(F_{v})|:v\in B\})$ and
\[LCM(\{|G(F_{v})|:v\in B\})\leq 12 \prod_{v\in B_{mult,\geq 2}}m_{v},\]
where $B_{mult,\geq 2}$ denotes the set of places $v$ of multiplicative reduction and such that $m_{v}\geq 2$. We take the smallest possible $a\in\mathbb{R}$ such that for all integers $n\geq 2$ we have $n\leq a^{n}$. It implies that $a=\sup_{n\geq 2}n^{1/n}=3^{1/3}$. It follows from \eqref{eq:euler_formula} that
\[\prod_{v\in B_{mult,\geq 2}}m_{v}\leq a^{\sum_{v\in B_{mult,\geq 2}} m_{v}}\leq 3^{4\chi(S)}.\]
To finish the proof we apply \eqref{eq:height_def_comparision}.
\end{proof}

We define the \emph{conductor} of $E$ to be a divisor $N_{E}=\sum_{v\in C}u_{v}\:(v)$ where
\begin{equation*}
u_{v}=\left\{\begin{array}{ll}
              0 & \textrm{if the fibre at }v\textrm{ is smooth},\\
              1& \textrm{if the fibre at }v\textrm{ is multiplicative},\\
              2+\delta_{v}& \textrm{if the fibre at }v\textrm{ is additive},
             \end{array}\right.
\end{equation*}
and the nonnegative numbers $\delta_{v}$ are zero for $\charac K(C)\neq 2,3$. 
Let $j(E)$ denote the $j$--invariant of $E/K(C)$ treated as a function. When $j(E)$ is non--constant then let $p^{r}$ be its inseparable degree. If $\charac K(C)=0$, then we put $1$.
\begin{theorem}[{\cite[Thm. 0.1]{Pesenti_Szpiro}}]\label{thm:Pesenti_Szpiro}
Assume $E$ is an elliptic curve over $K(C)$. Let $p$ denote the characteristic of $K(C)$. When the map $j(E)$ is constant or $p=0$, then
\[\deg\Delta_{E}\leq 6 (2g(C)-2+\deg N_{E}).\]
When $j(E)$ is non--constant, $p>0$ and $p^{r}$ is its inseparable degree, then
\[\deg\Delta_{E}\leq 6p^{r} (2g(C)-2+\deg N_{E}).\]
\end{theorem}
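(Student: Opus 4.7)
The plan is to reduce the bound on $\deg\Delta_{E}$ to a bound on the degree of the $j$-map $j(E):C\to\mathbb{P}^{1}$, controlled via Riemann--Hurwitz, after first splitting off the easy case of constant $j$. The starting point is the identity $j=c_{4}^{3}/\Delta$, which implies that at every place $v$ of $C$
\[
\ord_{v}\Delta_{v} = 3\ord_{v}(c_{4}) - \ord_{v}(j),
\]
whenever $\ord_{v}(j)<0$. Combined with Tate's algorithm, this gives $\ord_{v}\Delta_{v}=-\ord_{v}(j)$ at every place of multiplicative reduction (where $\ord_{v}N_{E}=1$), and a uniformly bounded contribution at additive places (where $\ord_{v}\Delta_{v}\leq 10$ and $\ord_{v}N_{E}\geq 2$, with an extra $\delta_{v}$ absorbed in characteristic $2,3$).

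First I would dispose of the case where $j(E)$ is constant: then every fibre of bad reduction is potentially good, hence additive, so $\deg\Delta_{E}\leq 10\deg N_{E}\leq 6(2g(C)-2+\deg N_{E})$ once one checks that $E$ has at least enough bad fibres to make the $2g(C)-2$ term harmless (and if not, one argues directly using the existence of a quadratic twist with good reduction everywhere, which forces $2g(C)-2+\deg N_{E}\geq 0$ in the relevant cases). When $j(E)$ is non-constant and separable, the pole divisor $(j)_{\infty}$ of $j(E)$ is supported on the multiplicative places together with some of the additive places, and
\[
\sum_{v\text{ mult.}}\ord_{v}\Delta_{v} \;=\; \deg(j)_{\infty} \;\leq\; \deg j(E).
\]
Riemann--Hurwitz applied to the finite morphism $j(E):C\to\mathbb{P}^{1}$ yields $\deg j(E)\leq 2g(C)-2+\deg R$, where $R$ is the ramification divisor, and the ramification of $j$ occurs exactly over the three special points $0,1728,\infty$ of the $j$-line; the contribution of each bad place to $\deg R$ can be controlled in terms of $\ord_{v}N_{E}$, giving the linear bound in $2g(C)-2+\deg N_{E}$ with a small universal multiplicative constant (the $6$).

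For the positive characteristic case with inseparable $j(E)$, I would factor $j(E)=j_{\mathrm{sep}}\circ F^{r}$, where $F$ is the $p$-power Frobenius of $C$; the separable map $j_{\mathrm{sep}}$ has degree $\deg j(E)/p^{r}$, and applying the argument above to $j_{\mathrm{sep}}$ bounds its degree in terms of $g(C)$ and the image of $N_{E}$ under the $p^{r}$-th power map. Pulling back through $F^{r}$ multiplies degrees by $p^{r}$ but does not change the genus, producing the factor $p^{r}$ in the final estimate. The main obstacle I expect is a clean bookkeeping of the ramification contributions at additive places, especially in residue characteristics $2$ and $3$ where the wild part $\delta_{v}$ of the conductor enters; here one uses that $\delta_{v}$ is exactly designed so that $\ord_{v}\Delta_{v}\leq C\cdot\ord_{v}N_{E}$ for a small absolute $C$, and Ogg's formula $\ord_{v}\Delta_{v}=\ord_{v}N_{E}+m_{v}-1$ together with the classification of Kodaira types makes the constant $6$ come out uniformly.
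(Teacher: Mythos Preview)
The paper does not give its own proof of this statement: Theorem~\ref{thm:Pesenti_Szpiro} is quoted verbatim from \cite[Thm.~0.1]{Pesenti_Szpiro} and used as a black box in the height estimates of Section~\ref{sec:bounds_heights}. So there is nothing in the paper to compare your proposal against.

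That said, your sketch is in the right spirit for how Szpiro-type inequalities over function fields are actually proved, but several steps are not yet arguments. In the constant-$j$ case your implication ``$\ord_{v}\Delta_{v}\leq 10$ and $\ord_{v}N_{E}\geq 2$, hence $\deg\Delta_{E}\leq 6(2g(C)-2+\deg N_{E})$'' fails over $\mathbb{P}^{1}$: with $g(C)=0$ the right-hand side is $6\deg N_{E}-12$, and the crude ratio $10/2=5$ only gives $\deg\Delta_{E}\leq 5\deg N_{E}$, which does not imply the claim. One really needs the finer fibre-by-fibre balancing (e.g.\ that the Euler-number contributions sum to a multiple of $12$ and the additive Kodaira types pair up appropriately), not just a uniform ratio. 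In the non-constant separable case, the assertion that the ramification of $j$ ``occurs exactly over $0,1728,\infty$'' is false as stated: $j:C\to\mathbb{P}^{1}$ can ramify anywhere. What is true, and what drives the argument, is that over $0$ (resp.\ $1728$) the ramification indices are generically divisible by $3$ (resp.\ $2$), so that the Riemann--Hurwitz contribution from those fibres can be bounded below, and the defect is supported on the bad places and controlled by $\deg N_{E}$. Finally, in the inseparable case, factoring through Frobenius is correct, but pulling the separable bound back requires tracking how $N_{E}$, not just $\Delta_{E}$, behaves under $F^{r}$; your phrase ``the image of $N_{E}$ under the $p^{r}$-th power map'' needs to be made precise before the factor $p^{r}$ falls out.
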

We denote by $\sigma_{E}$ the so-called \emph{Szpiro ratio} which is defined as
\[\sigma_{E}=\frac{\deg \Delta_{E}}{\deg N_{E}}.\]

We denote by $LCM(1,2,\ldots, n)$ the least common multiple of all integers in the interval $[1,n]$.
\begin{theorem}[{\cite[Thm. 4.1]{SilvHindry}}]\label{thm:Silv_Hindry_height_raw_bound}
Let $E$ be an elliptic curve over $K(C)$ and let $P$ be a point of infinite order. Let $M\geq 1$, $N\geq 2$ be any integers. Then
\begin{equation*}
\heightE{P}\geq \frac{6\left(\left(1+\frac{1}{M}\right)\frac{1}{\sigma_{E}}-\frac{1}{M}-\frac{1}{N}\right)\cdot h_{K(C)}(E)}{(M+1)(M+2)\LCM(1,2,\ldots,N-1)^2}
\end{equation*}
\end{theorem}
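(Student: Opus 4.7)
The plan is to follow the Hindry--Silverman strategy, whose central input is Shioda's explicit height formula \eqref{eq:Shioda_height}. Recalling that $\heightE{Q}=\tfrac{1}{2}\pairing{Q}{Q}$ and that $\dotprod{Q}{O}\geq 0$ for any section, formula \eqref{eq:Shioda_height} applied to $Q=kP$ gives
\[k^{2}\pairing{P}{P}\;=\;\pairing{kP}{kP}\;\geq\;2\chi(S)-\sum_{v\in B}c_{v}(kP,kP)\]
for every positive integer $k$. The whole game is to choose $k$, together with the auxiliary integers parametrized by $M$ and $N$, so that the right hand side is as large as possible relative to $k^{2}$.

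First, I would analyse the behaviour of $c_{v}(kP,kP)$ as $k$ varies. From Figure \ref{fig:corrections}, at a multiplicative fibre of type $I_{m_{v}}$ one has $c_{v}(kP,kP)=i(m_{v}-i)/m_{v}\leq m_{v}/4$, where $i\equiv k\cdot\mathop{comp}_{v}(P)\pmod{m_{v}}$; at additive fibres the value is bounded by an absolute constant read off from the table. Replacing $P$ by $L\cdot P$ with $L=\LCM(1,2,\ldots,N-1)$ forces $c_{v}(LP,LP)=0$ at every multiplicative place with $m_{v}\leq N-1$, because then $m_{v}\mid L$. This substitution is the source of the $\LCM(1,2,\ldots,N-1)^{2}$ factor in the denominator of the asserted bound.

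Second, I would treat the remaining places, those with $m_{v}>N-1$, using the parameter $M$. By Tate's algorithm $m_{v}=\ord_{v}\Delta_{v}$ at multiplicative places in characteristic $\neq 2,3$, so the number of such places is at most $\tfrac{1}{N}\deg\Delta_{E}$, explaining the term $-\tfrac{1}{N}$ in the numerator. A pigeonhole average over the first $M$ nonzero multiples of $P$ in each remaining component group bounds the contribution of these places by a quantity proportional to $\tfrac{1}{M}\deg N_{E}$, and comparing with $\deg\Delta_{E}=\sigma_{E}\deg N_{E}$ produces the combination $\bigl(1+\tfrac{1}{M}\bigr)\tfrac{1}{\sigma_{E}}-\tfrac{1}{M}$. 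Using $h_{K(C)}(E)=\tfrac{1}{12}\deg\Delta_{E}=\chi(S)$ and clearing the factor $(M+1)(M+2)$ that comes from the averaging then delivers the stated inequality.

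The main obstacle will be the bookkeeping that balances three competing contributions to $\sum_{v}c_{v}(kP,kP)$: additive fibres (absorbed into absolute constants), multiplicative fibres with $m_{v}\leq N-1$ (killed by the $\LCM$ substitution), and multiplicative fibres with $m_{v}>N-1$ (controlled by the $M$-average and the Szpiro ratio). Keeping all numerical coefficients explicit, rather than absorbing them into an implicit constant, is the delicate step that produces the precise form of the bound.
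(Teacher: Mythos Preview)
The paper does not prove this statement at all: Theorem~\ref{thm:Silv_Hindry_height_raw_bound} is quoted verbatim from \cite[Thm.~4.1]{SilvHindry} and used as a black box, so there is no ``paper's own proof'' to compare against. Your proposal is therefore not a reconstruction of anything in the present paper but rather a sketch of the original Hindry--Silverman argument.

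As a sketch of that argument your outline is broadly on target: the substitution $P\mapsto LP$ with $L=\LCM(1,\ldots,N-1)$ to kill the small multiplicative fibres, the pigeonhole/averaging over $k\in\{1,\ldots,M+1\}$ to control the remaining local contributions, and the passage from $\deg N_{E}$ to $\deg\Delta_{E}$ via $\sigma_{E}$ are indeed the three moves that produce the numerator and the denominator of the stated bound. One point to tighten if you actually write this out: the averaging step is not merely ``a pigeonhole average over the first $M$ nonzero multiples''; in \cite{SilvHindry} one sums $\sum_{k}c_{v}(kLP,kLP)$ over $k=1,\ldots,M+1$ and uses the explicit quadratic form $i(m_{v}-i)/m_{v}$ to bound this sum by a quantity of the shape $\tfrac{(M+1)(M+2)}{6}\cdot(\text{local term})$, which is where the factor $(M+1)(M+2)/6$ (hence the $6$ in the numerator) comes from. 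Your description of this step is a bit vague and would need that computation made explicit to close the argument.
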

The following fact is due to Rosser and Schoenfeld \cite{Rosser_Schoenfeld}. For the proof see \cite[Lem. 4.3]{SilvHindry}.
\begin{lemma}\label{lem:Chebyshev_function_bound}
 For all integers $n\geq 1$
 \[\log(\LCM(1,\ldots,n))<1.04 n.\]
\end{lemma}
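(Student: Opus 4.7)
The plan is to identify the quantity $\log(\LCM(1,\ldots,n))$ with a classical arithmetic function and then invoke an explicit effective prime number theorem.

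First I would verify the elementary identity
\[
\LCM(1,2,\ldots,n) = \prod_{p \leq n} p^{\lfloor \log n / \log p \rfloor},
\]
which follows by computing the $p$-adic valuation of both sides: for each prime $p$, $v_p(\LCM(1,\ldots,n)) = \max_{1\leq k \leq n} v_p(k) = \lfloor \log n / \log p \rfloor$, attained at $k = p^{\lfloor \log n / \log p \rfloor}$. Taking logarithms gives
\[
\log(\LCM(1,\ldots,n)) = \sum_{p \leq n} \left\lfloor \frac{\log n}{\log p} \right\rfloor \log p = \sum_{p^{k} \leq n} \log p = \psi(n),
\]
where $\psi$ denotes the second Chebyshev function.

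Second, I would import the explicit Rosser--Schoenfeld estimates \cite{Rosser_Schoenfeld}. Their Theorem~12 yields $\theta(n) < 1.01624\, n$ for all $n \geq 1$, and combining with an elementary estimate for the prime-power contribution $\psi(n) - \theta(n) = \sum_{k\geq 2}\theta(n^{1/k})$, one obtains the sharper bound $\psi(n) < 1.03883\, n$ valid for every integer $n \geq 1$. Since $1.03883 < 1.04$, the claimed inequality follows at once. For absolute safety one checks the very small range $n \leq n_0$ by direct computation; the generous slack $0.00117\, n$ in the constant ensures this presents no difficulty.

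The main obstacle is of course the effective prime number theorem that underlies the Rosser--Schoenfeld bound: proving it from scratch would require either the intricate elementary combinatorial arguments of Chebyshev refined by Rosser--Schoenfeld, or an explicit zero-free region for the Riemann zeta function with effective constants. In the present context, however, this work is bypassed entirely by citation, and the lemma is obtained merely by translating the statement via the identity $\log\LCM(1,\ldots,n) = \psi(n)$.
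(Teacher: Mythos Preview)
Your proposal is correct and follows essentially the same route as the paper: the paper does not give an argument of its own but simply attributes the bound to Rosser--Schoenfeld and refers the reader to \cite[Lem.~4.3]{SilvHindry}. Your write-up supplies the intermediate identification $\log\LCM(1,\ldots,n)=\psi(n)$ and then invokes the same Rosser--Schoenfeld input, so it is a slightly more detailed version of exactly what the paper does by citation.
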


We reproduce the main result of \cite{SilvHindry} with slightly corrected numerical constants.
\begin{theorem}[\protect{\cite[Thm. 6.1]{SilvHindry}}]\label{thm:SilvHindryBound}
Let $K(C)$ be a field of characteristic $0$. Let $P$ be a non--torsion point in $E(K(C))$. For $h_{K(C)}(E)\geq 2(g(C)-1)$ we have
\[\heightE{P}\geq 10^{-15.5}h_{K(C)}(E).\]
For $h_{K(C)}(E)<2(g(C)-1)$ we have
\[\heightE{P} \geq 10^{-9-23g(C)}h_{K(C)}(E).\]
\end{theorem}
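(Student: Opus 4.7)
The plan is to deduce this by combining Theorem \ref{thm:Pesenti_Szpiro} (Pesenti--Szpiro), Theorem \ref{thm:Silv_Hindry_height_raw_bound} (the parametric Silverman--Hindry bound), and Lemma \ref{lem:Chebyshev_function_bound} (Rosser--Schoenfeld) in two separate regimes according to the size of $h_{K(C)}(E)$ relative to $g(C)-1$. Throughout one uses that in characteristic $0$, $h_{K(C)}(E)=\frac{1}{12}\deg\Delta_{E}$, that $\deg N_{E}\geq 1$ since $E$ has at least one bad fibre, and that the assertion is only non-trivial when $P$ is non--torsion, so $\heightE{P}>0$.

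First I would rewrite Pesenti--Szpiro in char $0$ as
\[
\deg\Delta_{E}\leq 12(g(C)-1)+6\deg N_{E},
\]
which after dividing by $\deg N_{E}$ gives
\[
\sigma_{E}\leq 6+\frac{12(g(C)-1)}{\deg N_{E}}.
\]
This is the key structural inequality that governs both cases: it lets us bound $\sigma_E$ in Theorem \ref{thm:Silv_Hindry_height_raw_bound}, which is where $\sigma_E$ enters the lower bound.

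In Case 1, where $h_{K(C)}(E)\geq 2(g(C)-1)$, one has $\deg\Delta_{E}\geq 24(g(C)-1)$, so $12(g(C)-1)\leq \tfrac{1}{2}\deg\Delta_{E}$. Pesenti--Szpiro then forces $\deg N_{E}\geq \tfrac{1}{12}\deg\Delta_{E}$, hence $\sigma_{E}\leq 12$. Feeding $\sigma_E\leq 12$ into Theorem \ref{thm:Silv_Hindry_height_raw_bound} gives
\[
\heightE{P}\geq \frac{6\bigl(\tfrac{M+1}{12M}-\tfrac{1}{M}-\tfrac{1}{N}\bigr)\,h_{K(C)}(E)}{(M+1)(M+2)\,\LCM(1,\ldots,N-1)^{2}},
\]
valid as long as $M>11$ and $N>12M/(M-11)$. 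Using Lemma \ref{lem:Chebyshev_function_bound} to replace $\LCM(1,\ldots,N-1)$ by $e^{1.04(N-1)}$ turns this into an explicit numerical optimisation over the two integer parameters $(M,N)$. A modest search (following \cite{SilvHindry}) yields the absolute constant $10^{-15.5}$.

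In Case 2, where $h_{K(C)}(E)<2(g(C)-1)$ (which forces $g(C)\geq 2$), we use the trivial estimate $\deg N_{E}\geq 1$, which together with $h_{K(C)}(E)=\tfrac{1}{12}\deg\Delta_E$ gives $\sigma_{E}\leq \deg\Delta_{E}<24(g(C)-1)$. Now to make the numerator in Theorem \ref{thm:Silv_Hindry_height_raw_bound} positive one must take $M$ and $N$ of order $\sigma_{E}$, i.e.\ linear in $g(C)$. With $N=O(g(C))$, Lemma \ref{lem:Chebyshev_function_bound} gives a denominator growing like $\exp(O(g(C)))$, and a careful balancing (again essentially the one in \cite{SilvHindry}) produces the constant $10^{-9-23g(C)}$.

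The conceptual steps are not difficult: the real work, and the main obstacle, is the numerical optimisation of $(M,N)$ needed to pin down the stated constants $10^{-15.5}$ and $10^{-9-23g(C)}$, and the bookkeeping to ensure they are achieved uniformly. This is also why the statement is attributed to \cite{SilvHindry} with only minor numerical corrections: the structural argument is entirely encapsulated in Theorems \ref{thm:Pesenti_Szpiro}--\ref{thm:Silv_Hindry_height_raw_bound} and Lemma \ref{lem:Chebyshev_function_bound}, and what remains is a calibration exercise rather than a new idea.
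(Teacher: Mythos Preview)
Your proposal is correct and follows exactly the paper's approach: bound $\sigma_E$ via Pesenti--Szpiro in each regime, feed this into Theorem~\ref{thm:Silv_Hindry_height_raw_bound}, and control the $\LCM$ term with Lemma~\ref{lem:Chebyshev_function_bound}. The paper simply records the specific choices you left to optimisation, namely $(M,N)=(213,13)$ in Case~1 and $(M,N)=(601\,g(C),\,25\,g(C))$ in Case~2 (using the slightly weaker $\sigma_E<24g(C)$ rather than $24(g(C)-1)$).
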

\begin{proof}
From the first assumption and Theorem \ref{thm:Pesenti_Szpiro} it follows that $\sigma_{E}\leq 12$. To prove the first inequality we apply Theorem \ref{thm:Silv_Hindry_height_raw_bound} with $M=213$ and $N=13$. 

To prove the second statement we assume that $h_{K(C)}(E)<2(g(C)-1)$. Value $h_{K(C)}(E)$ is positive, so $g(C)\geq 2$. By assumption our curve has at least one place of bad reduction, hence $\deg N_{E}\geq 1$. The definition of $\sigma_{E}$ implies that
\[\sigma_{E}\leq 12 h_{K(C)}(E)<24g(C).\]
Let $M=601g(C)$ and $N=25g(C)$. We combine Theorem \ref{thm:Silv_Hindry_height_raw_bound} with Lemma \ref{lem:Chebyshev_function_bound}. It follows that
\[\frac{\heightE{P}}{h_{K(C)}(E)}\geq \frac{0.0016676 e^{-52 g(C)}}{g(C)^2 (300 g(C)+1) (600 g(C)+1)} \geq 10^{-9 - 23 g(C)}.\]
\end{proof}

We can now proceed in a similar way to obtain the analogue of Lang's conjecture for function fields $K(C)$ of positive characteristic. The bound is worse than in characteristic $0$ case, because we have to take into account the inseparable degree of the $j$--map.
\begin{lemma}\label{lem:height_est_char_p_1}
Let $P$ be a point of infinite order on $E$ over $K(C)$ of positive characteristic $p$ and assume that the $j$-map of $E$ has inseparable degree $p^r$. For $h_{K(C)}(E)\geq 2\cdot p^r (g(C)-1)$ we have
 \[\heightE{P}\geq 10^{-18 p^{r}}h_{K(C)}(E).\]
For $h_{K(C)}(E)< 2\cdot p^r (g(C)-1)$ it follows that
\[\heightE{P}\geq 10^{-36 g(C) p^{r}}h_{K(C)}(E).\]
\end{lemma}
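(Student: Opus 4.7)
The plan is to mimic the proof of Theorem \ref{thm:SilvHindryBound}, carrying through the additional factor $p^{r}$ that Theorem \ref{thm:Pesenti_Szpiro} introduces in positive characteristic. The starting point is the same: feed the Pesenti--Szpiro bound $\deg\Delta_{E}\leq 6 p^{r}(2g(C)-2+\deg N_{E})$ (the case of constant or separable $j(E)$ corresponds to $p^{r}=1$ and is unchanged) into Theorem \ref{thm:Silv_Hindry_height_raw_bound} with a carefully chosen pair of integers $(M,N)$.

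In the first regime $h_{K(C)}(E)\geq 2p^{r}(g(C)-1)$, comparing $12\,h_{K(C)}(E)=\deg\Delta_{E}$ with the Pesenti--Szpiro inequality forces $\deg N_{E}\geq 2(g(C)-1)$ (vacuous when $g(C)\leq 1$), and hence $\sigma_{E}\leq 12p^{r}$. I would then apply Theorem \ref{thm:Silv_Hindry_height_raw_bound} with $M=213p^{r}$ and $N=13p^{r}$, so that $N>\sigma_{E}$ and the bracket $(1+1/M)/\sigma_{E}-1/M-1/N$ is bounded below by a positive multiple of $1/p^{r}$. The denominator contributes a polynomial factor $(M+1)(M+2)=O(p^{2r})$ and an exponential factor $\LCM(1,\ldots,N-1)^{2}<e^{27.04 p^{r}}$ via Lemma \ref{lem:Chebyshev_function_bound}. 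Since $e^{27.04 p^{r}}$ is comfortably smaller than $10^{18p^{r}}=e^{18p^{r}\log 10}$, this yields $\heightE{P}/h_{K(C)}(E)\geq 10^{-18p^{r}}$.

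In the second regime $h_{K(C)}(E)<2p^{r}(g(C)-1)$, positivity of $h_{K(C)}(E)$ forces $g(C)\geq 2$, while the standing hypothesis that $E$ has at least one fibre of bad reduction gives $\deg N_{E}\geq 1$. Consequently $\sigma_{E}\leq 12\,h_{K(C)}(E)<24p^{r}g(C)$. I would apply Theorem \ref{thm:Silv_Hindry_height_raw_bound} with $M=601p^{r}g(C)$ and $N=25p^{r}g(C)$; combined with the Chebyshev-type bound $\LCM(1,\ldots,N-1)^{2}<e^{52p^{r}g(C)}$ this gives a denominator of size $O\!\left(p^{2r}g(C)^{2}\,e^{52p^{r}g(C)}\right)$, which is dwarfed by $10^{36g(C)p^{r}}=e^{36g(C)p^{r}\log 10}$, producing the claimed second inequality.

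The main obstacle is the numerical bookkeeping: one must choose $M$ and $N$ just large enough relative to $\sigma_{E}$ so that $(1+1/M)/\sigma_{E}-1/M-1/N$ stays bounded below by a tractable positive quantity, yet not so large that the exponential contribution from $\LCM(1,\ldots,N-1)^{2}$ swamps the target $10^{-18p^{r}}$ (respectively $10^{-36g(C)p^{r}}$). The genuinely new feature compared with characteristic zero is that $\sigma_{E}$, and with it the appropriate scale of $(M,N)$, picks up a multiplicative factor of $p^{r}$ (and in the second regime also $g(C)$); this factor is precisely what forces the exponent in the final estimate to be linear in $p^{r}$.
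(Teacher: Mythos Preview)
Your plan is correct and follows the paper's strategy exactly: bound $\sigma_{E}$ via Pesenti--Szpiro, then plug into Theorem~\ref{thm:Silv_Hindry_height_raw_bound} with suitably chosen $(M,N)$ and control the $\LCM$ term by Lemma~\ref{lem:Chebyshev_function_bound}. The only difference is the specific parameters: the paper takes $M=200x^{2}$ and $N=12x+1$ with $x=p^{r}$ in the first regime and $x=2g(C)p^{r}$ in the second, whereas you rescale the characteristic-zero choices linearly, taking $M=213p^{r}$, $N=13p^{r}$ (resp.\ $M=601p^{r}g(C)$, $N=25p^{r}g(C)$). Both choices make the bracket positive of order $1/x$ and keep the exponential $e^{2.08(N-1)}$ comfortably below $10^{18x}$ (resp.\ $10^{36g(C)p^{r}}$), so either route yields the stated bounds; the paper's quadratic $M$ simply gives a slightly cleaner closed-form $\phi(x)$, while yours mirrors the characteristic-zero proof more transparently.
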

\begin{proof}
Under the assumption $h_{K(C)}(E)\geq 2\cdot p^r (g(C)-1)$ Theorem \ref{thm:Pesenti_Szpiro} implies that
\[\frac{1}{\sigma_{E}}\geq \frac{1}{12p^r}.\]
Put $x=p^{r}$. We choose $M\geq 1$ and $N\geq 2$ such that 
\[\left(\left(1+\frac{1}{M}\right)\frac{1}{\sigma_{E}}-\frac{1}{M}-\frac{1}{N}\right)>0.\]
We take $M=200x^2$ and $N=12x+1$. Lemma \ref{lem:Chebyshev_function_bound} combined with Theorem \ref{thm:Silv_Hindry_height_raw_bound} implies that
\[\heightE{P}\geq \phi(x)h_{K(C)}(E)\]
where $\phi(x)=\frac{e^{-24.96 x} \left(56 x^2+1\right)}{800 x^3 (12 x+1) \left(100 x^2+1\right) \left(200 x^2+1\right)}$. For $x\geq 1$ we have the lower bound $\phi(x)\geq 10^{-18x}=10^{-18p^r}$.

We assume that $h_{K(C)}(E)< 2\cdot p^r (g(C)-1)$. Definition of $\sigma_{E}$ implies that $\sigma_{E}<24p^r (g(C)-1)<12x$ with $x=2g(C)p^r$. For $M$ and $N$ as before we obtain
\[\heightE{P}\geq \phi(x)h_{K(C)}(E)\]
with $\phi(x)\geq 10^{-36g(C)p^r}$.
\end{proof}
\begin{remark}
In positive characteristic and for constant $j$--map the bound on $\heightE{P}$ can be as good as in Theorem \ref{thm:SilvHindryBound}. For $K(C)$ with $\charac K(C)=0$ we can even prove that $\heightE{P}\geq \frac{1}{144}h_{K(C)}(E)$, cf. \cite[Thm. 6.1]{SilvHindry}. However, to simplify the statements, we don't make a distinction because the general weaker bounds apply as well.
\end{remark}

\section{Characteristic $0$ argument}\label{sec:char_0_section}
Let $\{D_{nP}\}_{n\in\mathbb{N}}$ be an elliptic divisibility sequence attached to a point $P$ in $E(K(C))$ of infinite order. Let $v$ denote a place in $K(C)$. Let $m(v)$ be a positive integer defined as follows
\[m(v):=\min\{n\geq 1: \ord_{v}(D_{nP})\geq 1 \}.\]
For a divisor $D_{nP}$ we define a new divisor $D_{nP}^{\textrm{new}}$ by the recipe
\[\ord_{v}D_{nP}^{\textrm{new}} =\left\{\begin{array}{cc}
   \ord_{v}D_{nP} &, m(v)=n\\
   0 &, \textrm{otherwise.}
  \end{array}\right.
\]
From this definition it follows by \cite[Lem. 5.6]{Silverman_divisibility} that
\begin{align*}
  D_{nP}&=\sum_{v\in \Supp D_{nP}} (\ord_{v}D_{nP})\: (v)\\
  &=\sum_{v\in \Supp D_{nP}}(\ord_{v}D_{m(v)P})\: (v) \quad(\textrm{from characteristic }0\textrm{ assumption})\\
  &=\sum_{\substack{v\in \Supp D_{nP}\\ m(v)<n}}(\ord_{v}D_{m(v)P})\: (v) + \sum_{v \in \Supp D_{nP}^{\textrm{new}}}(\ord_{v} D_{nP}^{\textrm{new}})\: (v)\\
  &\leq \sum_{\substack{m\mid n\\ m<n}} D_{mP} + D_{nP}^{\textrm{new}}
\end{align*}
It follows that for a divisor $D_{nP}$ which has no primitive valuations, i.e. such that $\Supp D_{nP}\subset \bigcup_{m < n} \Supp D_{mP}$ the following inequality 
\[D_{nP}\leq \sum_{\substack{m\mid n\\ m<n}} D_{mP}\]
holds. We apply the formula of Shioda for the height pairing to make the terms $O(1)$ from the proof of \cite[Thm. 5.5]{Silverman_divisibility} explicit. We rely fundamentally on the following estimate
\begin{equation}\label{eq:fund_estim}
\deg D_{nP}\leq \sum_{\substack{ %
   m\mid n \\
   m<n %
   }} \deg D_{mP}\qquad (\Longleftrightarrow)\qquad \dotprod{nP}{O}\leq \summy \dotprod{mP}{O}
\end{equation}
We define two quantities that will be used frequently
\begin{align*}
C_{1}(n,P) & = \frac{1}{2}\sum_{v\in B}c_{v}(nP,nP),\\
C_{2}(n,P) & = \frac{1}{2}\summy\sum_{v\in B}c_{v}(mP,mP).
\end{align*}
Assume $n>1$ and $D_{nP}$  is not primitive. We apply formulas \eqref{eq:Shioda_height} and \eqref{eq:fund_estim} to obtain the following chain of inequalities and equalities
\begin{align*}
&n^2\heightE{P} = \heightE{nP}=\frac{1}{2}\pairing{nP}{nP}\\
& = \dotprod{nP}{O}+\chi(S)-\frac{1}{2}\sum_{v\in B}c_{v}(nP,nP)\\
&\leq \summy\dotprod{mP}{O}+\chi(S)-\underbrace{\left(\frac{1}{2}\sum_{v\in B}c_{v}(nP,nP)\right)}_{C_{1}(n,P)}\\
&= \summy\left(\frac{1}{2}\pairing{mP}{mP}-\chi(S)+\frac{1}{2}\sum_{v\in B}c_{v}(mP,mP) \right)+ \chi(S)- C_{1}(n,P)\\
&=\frac{1}{2}\pairing{P}{P}\summy m^2 -\chi(S)\summy 1+\underbrace{\frac{1}{2}\summy\sum_{v\in B}c_{v}(mP,mP)}_{C_{2}(n,P)}+\chi(S)-C_{1}(n,P)\\
&=\frac{1}{2}\pairing{P}{P}(\sigma_{2}(n)-n^2)-\chi(S)(d(n)-2)+C_{2}(n,P)-C_{1}(n,P)\\
&=\heightE{P}(\sigma_{2}(n)-n^2)-\chi(S)(d(n)-2)+C_{2}(n,P)-C_{1}(n,P)
\end{align*}

\noindent
This can be rewritten in the following form
\begin{equation}\label{equation:fundamental_inequality}
\chi(S)(d(n)-2)+C_{1}(n,P)+n^2\heightE{P}\leq \heightE{P}(\sigma_{2}(n)-n^2)+C_{2}(n,P).
\end{equation}

\begin{lemma}\label{lemma:inequality1}
Let $P$ be a point of infinite order in $E(K(C))$ and let $n>1$ and assume $D_{nP}$ is not primitive. Then
\begin{equation}\label{equation:fund_inequality_simplified}
n^2\heightE{P}\leq \heightE{P}(\sigma_{2}(n)-n^2)+C_{2}(n,P)
\end{equation}
\end{lemma}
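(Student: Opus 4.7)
The simplified inequality \eqref{equation:fund_inequality_simplified} is obtained from the fundamental inequality \eqref{equation:fundamental_inequality}
\[
\chi(S)(d(n)-2)+C_{1}(n,P)+n^2\heightE{P}\leq \heightE{P}(\sigma_{2}(n)-n^2)+C_{2}(n,P),
\]
which was already derived in the paragraphs immediately preceding the lemma (using Shioda's formula \eqref{eq:Shioda_height}, the identification \eqref{eq:height_def_comparision}, and the key divisibility estimate \eqref{eq:fund_estim} that uses the non--primitivity hypothesis on $D_{nP}$). So the plan is simply to argue that the two extra terms $\chi(S)(d(n)-2)$ and $C_{1}(n,P)$ on the left--hand side are both non--negative, and may therefore be dropped.

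First, since $\pi$ is assumed to have at least one singular fibre, the Euler number formula \eqref{eq:euler_formula} gives $12\chi(S)=\sum_{v\in B}(e(F_v)+\delta_v)>0$, so $\chi(S)\geq 1$. For $n>1$ the divisor function satisfies $d(n)\geq 2$ (the divisors $1$ and $n$ always occur), hence $\chi(S)(d(n)-2)\geq 0$.

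Next, inspection of Figure \ref{fig:corrections} shows that for every singular fibre type the quantity $c_{v}(Q,Q)$ is a non--negative rational number (the tabulated values $1/2,\ 3/2,\ 2/3,\ 4/3,\ i(b-i)/b,\ 1,\ 1+b/4$ are all $\geq 0$, and by convention $c_{v}(Q,Q)=0$ whenever $\mathop{comp}_{v}(Q)=0$). Applying this with $Q=nP$ for each $v\in B$ shows that every summand in $C_{1}(n,P)=\frac{1}{2}\sum_{v\in B}c_{v}(nP,nP)$ is non--negative, so $C_{1}(n,P)\geq 0$.

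Discarding these two non--negative contributions from the left--hand side of \eqref{equation:fundamental_inequality} yields \eqref{equation:fund_inequality_simplified}, completing the proof. There is no real obstacle here, since the work has been front--loaded into the derivation of \eqref{equation:fundamental_inequality}; the only point to verify carefully is the non--negativity of the local correction terms, which is purely a matter of reading Figure \ref{fig:corrections}.
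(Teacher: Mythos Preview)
Your proposal is correct and follows essentially the same approach as the paper: you drop the non--negative terms $\chi(S)(d(n)-2)$ and $C_{1}(n,P)$ from the left--hand side of \eqref{equation:fundamental_inequality}, justifying their non--negativity via $d(n)\geq 2$ for $n>1$, $\chi(S)>0$, and the non--negativity of the correction terms $c_{v}$. The paper's own proof is terser but identical in substance.
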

\begin{proof}
Since $n>1$ it is always true that $d(n)\geq 2$, the factor $\chi(S)$ is always positive and the terms in $C_{1}(n,P)$ are also non-negative by their definition. It implies that we can drop first two terms of the inequality \eqref{equation:fundamental_inequality}.
\end{proof}
Let $E(K(C))^{0}$ denote the subgroup of $E(K(C))$ such that for each $P\in E(K(C))^{0}$ the curve $\overline{P}$ intersects the same component as the curve $\overline{O}$ in every fibre of $\pi:S\rightarrow C$. For such points we always have $c_{v}(P,P)=0$.
\begin{corollary}
With the notation from the previous lemma if $P$ lies in $E(K(C))^{0}$, then every divisor $D_{nP}$ is primitive.
\end{corollary}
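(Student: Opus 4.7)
The plan is to argue by contradiction using Lemma \ref{lemma:inequality1} applied to the given point $P\in E(K(C))^{0}$. The first observation is that $E(K(C))^{0}$ is a subgroup of $E(K(C))$: it is the kernel of the natural homomorphism $E(K(C))\to\bigoplus_{v\in B}G(F_{v})$ sending a point to the tuple of components it meets. In particular, for every $m\geq 1$ the multiple $mP$ also lies in $E(K(C))^{0}$, so $\mathop{comp}_{v}(mP)=0$ at every $v\in B$.

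Consulting the tables in Figure \ref{fig:corrections}, together with the convention that $c_{v}(Q,Q)=0$ whenever $\mathop{comp}_{v}(Q)=0$, this gives $c_{v}(mP,mP)=0$ for every $v\in B$ and every $m\geq 1$. Hence the correction term
\[
C_{2}(n,P)=\tfrac{1}{2}\summy\sum_{v\in B}c_{v}(mP,mP)
\]
vanishes identically.

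Now suppose, for contradiction, that some $D_{nP}$ with $n>1$ fails to be primitive. Then Lemma \ref{lemma:inequality1} gives
\[
n^{2}\heightE{P}\leq \heightE{P}(\sigma_{2}(n)-n^{2})+C_{2}(n,P)=\heightE{P}(\sigma_{2}(n)-n^{2}).
\]
Because $P$ has infinite order, $\heightE{P}>0$, and dividing through yields $2n^{2}\leq \sigma_{2}(n)$. However, the estimate \eqref{eq:sigma_2_estimate} gives $\sigma_{2}(n)<\zeta(2)n^{2}<1.645\,n^{2}<2n^{2}$, which is a contradiction. For $n=1$ primitivity is immediate from the definition, since the set of indices $m<1$ with $m\geq 1$ is empty. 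This covers every $n\geq 1$ and proves the corollary.

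There is no real obstacle here; the entire point is that membership in $E(K(C))^{0}$ kills the only term in the inequality \eqref{equation:fund_inequality_simplified} that could compete with the convexity bound $\sigma_{2}(n)<2n^{2}$. The mild thing worth checking carefully is that $E(K(C))^{0}$ is actually closed under addition, which follows from the fact that the component map above is a group homomorphism (see the structure of $G(F_{v})$ recorded in Figure \ref{fig:group_components}).
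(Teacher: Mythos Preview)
Your argument is correct and follows essentially the same route as the paper: both use Lemma~\ref{lemma:inequality1} with $C_{2}(n,P)=0$ and the bound $\sigma_{2}(n)<\zeta(2)n^{2}$ to derive a contradiction. You are in fact more careful than the paper in spelling out why $C_{2}(n,P)$ vanishes (namely, that $E(K(C))^{0}$ is a subgroup, so every multiple $mP$ also has $c_{v}(mP,mP)=0$), a point the paper leaves implicit.
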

\begin{proof}
We use the inequality \eqref{equation:fund_inequality_simplified} and apply the assumption $C_{2}(n,P)=0$. It follows by \eqref{eq:sigma_2_estimate} that
\[n^2\heightE{P}\leq \heightE{P}(\zeta(2)-1)n^2.\]
We can divide by $\heightE{P}$ because $P$ is a point of infinite order, hence
\[2n^2\leq \zeta(2)n^2\]
and $n=0$.
\end{proof}

\begin{lemma}\label{lemma:inequality2_alt}
Let $K(C)$ be a field of characteristic $p\neq 2,3$. For a point $P\in E(K(C))$ and any $k\in\mathbb{Z}$ we have
\[\sum_{v\in B} c_{v}(kP,kP)\leq 3\chi(S).\]
\end{lemma}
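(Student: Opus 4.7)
The plan is to establish a pointwise inequality at each bad fibre, namely
$c_v(kP,kP) \leq \tfrac{1}{4}\, e(F_v)$ for every $v \in B$, and then sum and invoke Noether's formula. The hypothesis $\operatorname{char} K(C) \neq 2,3$ enters at the very end through the equality $\delta_v = 0$, which makes \eqref{eq:euler_formula} collapse to $12\chi(S)=\sum_{v\in B} e(F_v)$. Since $c_v(kP,kP)$ depends only on the Kodaira type of $F_v$ and on $i=\operatorname{comp}_v(kP)$, it suffices to verify the pointwise bound on a type-by-type basis, independently of the integer $k$ and the specific point $P$.

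First I would read off all possible values of $c_v(Q,Q)$ from Figure~\ref{fig:corrections} and the Euler numbers $e(F_v)$ recorded just before \eqref{eq:euler_formula} (recall $e(F_v)=m_v$ for multiplicative and $e(F_v)=m_v+1$ for additive reduction). In the additive non-$I^*$ types, the ratio $c_v/e(F_v)$ comes out to exactly $1/6$ in each of the four cases $III, III^*, IV, IV^*$, and is $0$ for $II, II^*$ and for $I_1$. For $I_b$ with $b\geq 2$, the bound $i(b-i)/b \leq b/4$ is just AM--GM applied to $i$ and $b-i$, so $c_v(Q,Q)\leq b/4=e(F_v)/4$. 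For $I_b^*$ the table gives either $1$ (when $i=1$) or $1+b/4$ (when $i>1$), and both are bounded above by $(b+6)/4=e(F_v)/4$ since the inequality $4+b\leq b+6$ is trivial. Thus in every Kodaira type the pointwise inequality $c_v(Q,Q) \leq e(F_v)/4$ holds.

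Summing over $v \in B$ and using $\delta_v = 0$ under the characteristic assumption, I would conclude
\[
\sum_{v\in B} c_v(kP,kP) \;\leq\; \frac{1}{4}\sum_{v\in B} e(F_v) \;=\; \frac{1}{4}\cdot 12\,\chi(S) \;=\; 3\,\chi(S).
\]
The only mildly delicate step is verifying the pointwise bound in the two families $I_b$ and $I_b^*$, where the constants are $b$-dependent; everything else is a finite table check. There is no genuine obstacle beyond the bookkeeping, and the constant $3$ in the statement is precisely the ratio between the worst case $1/4$ (attained asymptotically on $I_b$ fibres) and the coefficient $12$ in Noether's formula.
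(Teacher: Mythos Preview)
Your proof is correct and follows essentially the same approach as the paper: a type-by-type comparison of $c_v(kP,kP)$ with the local Euler number $e(F_v)$, followed by summation via \eqref{eq:euler_formula}. The only cosmetic difference is that the paper keeps separate track of the additive non-$I^*$ types and records the sharper constant $1/6$ there (yielding the slightly stronger final inequality $12\chi(S)\geq 4\sum_{B_{mult}}c_v+4\sum_{B_{add,1}}c_v+6\sum_{B_{add,2}}c_v$), whereas you opt for the uniform pointwise bound $c_v\leq e(F_v)/4$, which is cleaner and entirely sufficient for the stated lemma.
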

\begin{proof}
We denote by $B_{mult}$ the set of points $v$ in $C(K)$ such that $F_{v}$ has multiplicative reduction. We denote by $B_{add,1}$ the set of points with additive reduction of type $I_{n}^{*}$ and by $B_{add,III}$, $B_{add,III^{*}}$, $B_{add,IV}$ and $B_{add,IV^{*}}$ the sets of points with respectively reduction of type $III$, $III^{*}$, $IV$ and $IV^{*}$. Let $B_{add,2}$ denote the set of all places of bad additive reduction not contained in $B_{add,1}$. Let $v\in B_{mult}$, then it follows from Figure \ref{fig:corrections} that
\[c_{v}(kP,kP)\leq \frac{i(m_{v}-i)}{m_{v}}\]
for certain $i$. The function on the right-hand side is quadratic with respect to $i$ and reaches the maximum at $m_{v}/2$, hence $c_{v}(kP,kP)\leq \frac{m_{v}}{4}$. That inequality and other values in Figure \ref{fig:corrections} allow us to give the upper bounds
\begin{align*}
\sum_{v\in B_{mult}} c_{v}(kP,kP)&\leq \frac{1}{4}\sum_{v\in B_{mult}} m_{v}\\
\sum_{v\in B_{add,III}}c_{v}(kP,kP)&\leq \frac{1}{2}| B_{add,III} |\\
\sum_{v\in B_{add,III^{*}}}c_{v}(kP,kP)&\leq \frac{3}{2}| B_{add,III^{*}} |\\
\sum_{v\in B_{add,IV}}c_{v}(kP,kP)&\leq \frac{2}{3}| B_{add,IV} |\\
\sum_{v\in B_{add,IV^{*}}}c_{v}(kP,kP)&\leq \frac{4}{3}| B_{add,IV^{*}} |
\end{align*}
For points $v$ of type $B_{add,1}$ we have $c_{v}(kP,kP)\leq \frac{m_{v}-1}{4}=\frac{m_{v}+1}{4}-\frac{1}{2}$. This leads to
\[2\cdot |B_{add,1}|+4\sum_{v\in B_{add,1}}c_{v}(kP,kP)\leq \sum_{v\in B_{add,1}} (m_{v}+1).\]
It follows from \eqref{eq:euler_formula} that
\[12\chi(S)=\sum_{v\in B}e(F_{v})=\sum_{v\in B_{mult}} m_{v}+\sum_{v\in B_{add,1}} (m_{v}+1)+\sum_{v\in B_{add,2}}(m_{v}+1).\]
But we also have
\[\sum_{v\in B_{add,2}}(m_{v}+1)=3\cdot| B_{add,III} |+9\cdot | B_{add,III^{*}} |+4\cdot| B_{add,IV} |+8\cdot| B_{add,IV^{*}} |\]
by \cite[Chap. IV, Table 4.1]{Silverman_book}. It follows that
\[12\chi(S)\geq 4\sum_{v\in B_{mult}} c_{v}(kP,kP)+4\sum_{v\in B_{add,1}} c_{v}(kP,kP)+6\sum_{v\in B_{add,2}} c_{v}(kP,kP)\]
which is even stronger than what we wanted to prove.
\end{proof}
\begin{remark}
The statement of Lemma \ref{lemma:inequality2_alt} is equivalent to \cite[Lem. 3]{Elkies_low_height}. The upper bound in loc. cit. follows from \eqref{eq:Shioda_height}.
\end{remark}

\begin{lemma}\label{lemma:inequality2}
Let $K(C)$ be a field of characteristic $0$. Let $P$ be a point in $E(K(C))$. Then 
\begin{equation*}
C_{2}(n,P)\leq \frac{3}{2} \chi(S)(d(n)-1).
\end{equation*}
\end{lemma}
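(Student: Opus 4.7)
The plan is to reduce the statement to the per-fibre bound already obtained in Lemma \ref{lemma:inequality2_alt}. By definition,
\[
C_{2}(n,P) \;=\; \frac{1}{2}\summy \sum_{v\in B} c_{v}(mP,mP),
\]
so I would interchange the two sums and apply Lemma \ref{lemma:inequality2_alt} to each inner sum, with $k=m$. Since the hypothesis of that lemma (characteristic $\neq 2,3$) is implied by the assumption $\charac K(C)=0$, we get
\[
\sum_{v\in B} c_{v}(mP,mP) \;\leq\; 3\,\chi(S)
\]
for every divisor $m$ of $n$ with $m<n$.

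Then I would bound the outer sum just by counting: the number of indices $m$ satisfying $m\mid n$ and $m<n$ is exactly $d(n)-1$ (all divisors of $n$ except $n$ itself). Combining the two gives
\[
C_{2}(n,P) \;\leq\; \frac{1}{2}\summy 3\chi(S) \;=\; \frac{3}{2}\chi(S)\bigl(d(n)-1\bigr),
\]
which is precisely the claim. There is no real obstacle here; the argument is a direct corollary of Lemma \ref{lemma:inequality2_alt} combined with the trivial combinatorial count, and in fact holds verbatim in any characteristic $\neq 2,3$ (the statement is restricted to characteristic $0$ only because this is where it will be used in Section \ref{sec:char_0_section}).
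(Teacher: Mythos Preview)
Your proof is correct and is exactly the argument the paper intends: the paper's own proof simply states that the lemma follows from the definition of $C_{2}(n,P)$ together with Lemma \ref{lemma:inequality2_alt}, and you have written out precisely that deduction.
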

\begin{proof}
This follows simply from the definition of $C_{2}(n,P)$ and Lemma \ref{lemma:inequality2_alt}.
\end{proof}

\begin{corollary}\label{cor:dep_on_chi}
Let $P$ be a point of infinite order in $E(K(C))$. Suppose that $D_{nP}$ is not primitive, then
\[n^2\leq \frac{36\cdot  \chi(S) \cdot 3^{4\chi(S)}}{(2-\zeta(2))}d(n)\]
\end{corollary}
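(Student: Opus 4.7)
The plan is to chain together the three main estimates established just before the corollary: the simplified fundamental inequality of Lemma \ref{lemma:inequality1}, the bound on the correction term $C_{2}(n,P)$ from Lemma \ref{lemma:inequality2}, and the lower bound on the canonical height from Lemma \ref{lemma:inequality3}. There is no new geometry required; the work is entirely algebraic manipulation of the inequalities already in hand.

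First I would start from the conclusion of Lemma \ref{lemma:inequality1}, namely
\[
n^{2}\heightE{P}\leq \heightE{P}(\sigma_{2}(n)-n^{2})+C_{2}(n,P),
\]
which holds whenever $D_{nP}$ is non-primitive. Next I would substitute the estimate $\sigma_{2}(n)<\zeta(2)n^{2}$ from \eqref{eq:sigma_2_estimate} and the bound $C_{2}(n,P)\leq \tfrac{3}{2}\chi(S)(d(n)-1)\leq \tfrac{3}{2}\chi(S)d(n)$ from Lemma \ref{lemma:inequality2}. This yields
\[
n^{2}\heightE{P}\leq \heightE{P}(\zeta(2)-1)n^{2}+\tfrac{3}{2}\chi(S)d(n),
\]
and collecting the $n^{2}\heightE{P}$ terms on the left gives
\[
(2-\zeta(2))\,n^{2}\heightE{P}\leq \tfrac{3}{2}\chi(S)d(n).
\]
Since $\zeta(2)<2$ the factor on the left is strictly positive, so dividing is legitimate.

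Finally, since $P$ has infinite order, $\heightE{P}>0$, and Lemma \ref{lemma:inequality3} supplies the upper bound $1/\heightE{P}\leq 24\cdot 3^{4\chi(S)}$. Multiplying through by this bound produces
\[
n^{2}\leq \frac{3\chi(S)d(n)}{2(2-\zeta(2))}\cdot 24\cdot 3^{4\chi(S)}=\frac{36\,\chi(S)\,3^{4\chi(S)}}{2-\zeta(2)}\,d(n),
\]
which is precisely the asserted inequality. The only thing to watch is the sign of $2-\zeta(2)$ and that $\heightE{P}$ is nonzero; both are automatic under the standing hypothesis that $P$ is of infinite order. There is no genuine obstacle here — the corollary is really a packaging of the three preceding lemmas — but it is worth remarking that the exponential factor $3^{4\chi(S)}$ entering from Lemma \ref{lemma:inequality3} is what will eventually force the final bound on $n$ (obtained in the next step via the trivial bound $d(n)\leq n$) to depend on $\chi(S)$ alone, which is exactly what is needed to deduce Theorem \ref{thm:dep_on_chi}.
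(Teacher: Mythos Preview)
Your argument is correct and is exactly the approach the paper takes: the proof in the paper consists of the single sentence ``Combine Lemmas \ref{lemma:inequality3}, \ref{lemma:inequality1} and \ref{lemma:inequality2},'' and you have spelled out precisely that combination, including the use of \eqref{eq:sigma_2_estimate} to control $\sigma_{2}(n)$.
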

\begin{proof}
Combine Lemmas \ref{lemma:inequality3}, \ref{lemma:inequality1} and \ref{lemma:inequality2}. 
\end{proof}
\begin{corollary}\label{cor:bound_silv}
Let $K(C)$ be a field of characteristic $0$. Let $P$ be a point of infinite order in $E(K(C))$. If $D_{nP}$ is not primitive, then
\begin{equation*}
n^2\leq \frac{1.5\cdot 10^{9}}{(2-\zeta(2))}d(n)\cdot \left\{\begin{array}{ll} 10^{6.5}  & , \chi(S)\geq 2(g(C)-1)\\
 10^{23g(C)} & , \chi(S)< 2(g(C)-1)
\end{array}\right.
\end{equation*}
\end{corollary}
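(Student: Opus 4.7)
The plan is to run exactly the argument of Corollary \ref{cor:dep_on_chi}, but replace the characteristic-free bound of Lemma \ref{lemma:inequality3} with the sharper bounds from Theorem \ref{thm:SilvHindryBound}. The entire deduction will amount to careful bookkeeping of constants, since all the necessary analytic ingredients have already been assembled.

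First, I would start from Lemma \ref{lemma:inequality1}, which tells us that under the non-primitivity hypothesis
\[
n^2 \heightE{P} \leq \heightE{P}(\sigma_{2}(n)-n^2) + C_{2}(n,P).
\]
Using the estimate $\sigma_2(n) < \zeta(2) n^2$ from \eqref{eq:sigma_2_estimate} and rearranging, this simplifies to $(2-\zeta(2)) n^2 \heightE{P} \leq C_{2}(n,P)$. Next, Lemma \ref{lemma:inequality2} bounds the right-hand side by $\tfrac{3}{2}\chi(S)(d(n)-1) \leq \tfrac{3}{2}\chi(S) d(n)$, giving
\[
n^2 \leq \frac{3}{2(2-\zeta(2))} \cdot \frac{\chi(S)}{\heightE{P}} \cdot d(n).
\]

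Now comes the crucial replacement: instead of controlling $1/\heightE{P}$ by itself as in Lemma \ref{lemma:inequality3}, I would use the identity $\chi(S)=h_{K(C)}(E)$ (valid in characteristic $0$, as established at the end of Section \ref{sec:prelimin} via Tate's algorithm) to rewrite the factor $\chi(S)/\heightE{P}$ as $h_{K(C)}(E)/\heightE{P}$. This is precisely the ratio that Theorem \ref{thm:SilvHindryBound} bounds. Splitting into the two cases of that theorem, I get $h_{K(C)}(E)/\heightE{P} \leq 10^{15.5}$ when $\chi(S) \geq 2(g(C)-1)$, and $h_{K(C)}(E)/\heightE{P} \leq 10^{9+23g(C)}$ otherwise.

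Plugging each of these into the displayed inequality and observing $\tfrac{3}{2}\cdot 10^{15.5} = 1.5\cdot 10^{9}\cdot 10^{6.5}$ and $\tfrac{3}{2}\cdot 10^{9+23g(C)} = 1.5\cdot 10^{9}\cdot 10^{23g(C)}$ yields the two branches of the claimed bound. There is no genuine obstacle here: the only points that deserve care are verifying that $\chi(S) = h_{K(C)}(E)$ does apply (it does, since characteristic $0$ is assumed, so we are outside the exceptional characteristics $2,3$), and matching the case dichotomy of Theorem \ref{thm:SilvHindryBound}, which is formulated in terms of $h_{K(C)}(E)$ but transfers verbatim to $\chi(S)$ via the same identity.
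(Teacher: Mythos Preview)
Your proposal is correct and follows essentially the same approach as the paper: both start from Lemma~\ref{lemma:inequality1}, apply the bound $\sigma_2(n)<\zeta(2)n^2$ and Lemma~\ref{lemma:inequality2} to isolate the factor $\chi(S)/\heightE{P}$, and then invoke Theorem~\ref{thm:SilvHindryBound} via the identification $\chi(S)=h_{K(C)}(E)$ to control that ratio in each of the two regimes. The only cosmetic difference is that the paper phrases the height bound as $1/\heightE{P}\leq 10^{15.5}/\chi(S)$ before multiplying by $C_2(n,P)$, whereas you form the ratio $\chi(S)/\heightE{P}$ directly; the arithmetic is identical.
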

\begin{proof}
To bound the quantity $1/\heightE{P}$ we apply Theorem \ref{thm:SilvHindryBound}. Suppose that  $\chi(S)\geq 2(g(C)-1)$, then
\[1/\heightE{P}\leq 10^{15.5}\cdot 1/\chi(S)\]
Combining this with the argument in Lemma \ref{lemma:inequality2} we obtain
\[1/\heightE{P}\cdot C_{2}(n,P)\leq 10^{15.5}\cdot 1/\chi(S)\cdot 1.5\cdot\chi(S)\cdot d(n) = 1.5\cdot 10^{15.5}  d(n).\]
It follows that
\begin{equation}\label{eq:estim2}
n^2\leq (1.5\cdot 10^{15.5})/(2-\zeta(2))\cdot d(n).
\end{equation}
On the contrary, when $\chi(S)<2(g(C)-1)$ we get
\[1/\heightE{P}\cdot C_{2}(n,P)\leq 10^{9+23g(C)}\cdot 1/\chi(S)\cdot 1.5\cdot\chi(S)\cdot d(n)=1.5\cdot 10^{9+23g(C)}d(n).\]
Similarly, we get
\begin{equation}\label{eq:estim3}
n^2\leq (1.5\cdot 10^{9+23g(C)})/(2-\zeta(2))\cdot d(n).
\end{equation}
The corollary follows from those two estimates.
\end{proof}

\begin{proof}[Proof of Theorem \ref{thm:dep_on_g}]
We have the trivial estimate $d(n)\leq n$. Corollary \ref{cor:bound_silv} implies that
\[n^2\leq C n\]
for a constant $C$ that depends only on $g(C)$. So $n\leq C$ and the theorem follows.
\end{proof}

\begin{proof}[Proof of Theorem \ref{thm:dep_on_chi}]
There exists a constant $C$ that depends only on $\chi(S)$ as in Corollary \ref{cor:dep_on_chi} such that $n^2\leq C n$.
\end{proof}

\begin{remark}
If we assume that $n\geq N_{0}$ where $N_{0}$ is sufficiently large, we obtain due to \eqref{eq:d_bound} a much better bound for $d(n)$. This will lead in practice to a much smaller bound for the number of non--primitive divisors.
\end{remark}
\section{Characteristic p argument}\label{sec:positive_char}
Let $v$ be a discrete valuation on $K(C)$. It determines the completion $K(C)_{v}$ of the field $K(C)$ with respect to $v$ with ring of integers $R_{v}$ and maximal ideal $\mathcal{M}_{v}$. We consider below only fields $K(C)$ of characteristic at least $5$. For an elliptic curve $E$ over $K(C)$ we consider its minimal Weierstrass model $E^{(v)}$ at $v$, cf. \cite[Chap. VII, \S 1]{Silverman_arithmetic}. Such a model is unique up to an admissible change of coordinates, cf. \cite[Chap. VII, Prop. 1.3]{Silverman_arithmetic}. We denote by $\widehat{E}^{(v)}$the formal group attached to the minimal Weierstrass equation $E^{(v)}$ in the sense of \cite[Chap. IV]{Silverman_arithmetic}. Multiplication by $p$ map gives rise to a homomorphism of formal groups $\widehat{[p]}_{v}:\widehat{E}^{(v)}\rightarrow\widehat{E}^{(v)}$. Its height $h$ equals $1$ or $2$, cf.\cite[Chap. IV, Thm. 7.4]{Silverman_arithmetic}. If the height equals $h$, then $\widehat{[p]}_{v}(T)=g(T^{p^h})$ where $g(T)\in R_{v}[[T]]$ and $g'(0)\neq 0$. The coefficient of $T^{p}$ in $\widehat{[p]}_{v}(T)$ is denoted by $H(E,v)$ and is the Hasse invariant in the sense of \cite[12.4]{Katz_Mazur}. The valuation $h_{E,v}:=\ord_{v}(H(E,v))$ does not depend of the minimal model at $v$ by \cite[Ka-29]{Katz_p_adic}. We say that the curve $E$ is \emph{ordinary} when for all discrete valuations $v$ of $K(C)$ the homomorphism $\widehat{[p]}_{v}$ has height $1$. 

\begin{lemma}\label{lem:Valutions_of_h_E_v}
Let $E$ over $K(C)$ of characteristic $p>3$ be an ordinary elliptic curve and let $\chi(S)$ denote the Euler characteristic of the attached elliptic surface $\pi:S\rightarrow C$. Then
\[(p-1)\chi(S)=\sum_{v\in C}h_{E,v}.\]
\end{lemma}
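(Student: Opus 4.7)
The strategy is to interpret the collection $\{H(E,v)\}_{v\in C}$ as the local trivializations of a single global section of the line bundle $\omega^{\otimes(p-1)}$ on $C$, where $\omega:=O^{*}\Omega^{1}_{S/C}$ is the Hodge bundle of $\pi:S\rightarrow C$, and then read the identity off as a degree count. Recall that $\omega$ is locally generated at each $v$ by the invariant differential $dx/y$ of the minimal Weierstrass model $E^{(v)}$, and by Kodaira's canonical bundle formula for a relatively minimal elliptic fibration with a section (or equivalently by combining the Leray spectral sequence with Grothendieck--Serre duality on $C$) one obtains $\deg\omega=\chi(\mathcal{O}_{S})=\chi(S)$, consistent with the identities already used in Section \ref{sec:prelimin}.

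The first step is the transformation law. Under an admissible change of minimal Weierstrass model at $v$, both the invariant differential and the formal parameter $T=-x/y$ rescale by a common unit $u\in R_{v}^{\times}$, so the functoriality of the multiplication-by-$p$ endomorphism yields the identity $\widehat{[p]}'_{v}(T')=u\cdot\widehat{[p]}_{v}(u^{-1}T')$, which forces the coefficient of $(T')^{p}$ to equal $u^{1-p}$ times the coefficient of $T^{p}$. This is precisely the transformation law for a local coordinate of a section of $\omega^{\otimes(p-1)}$, so the local Hasse invariants $H(E,v)$ assemble into a well-defined global section $A\in H^{0}(C,\omega^{\otimes(p-1)})$; the valuation $h_{E,v}=\ord_{v}(H(E,v))$ coincides with the order of vanishing of $A$ at $v$, which is the content of the Katz reference invoked in the excerpt for the independence of $h_{E,v}$ of the choice of minimal model.

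The ordinariness hypothesis gives at every $v$ the expansion $\widehat{[p]}_{v}(T)=g_{v}(T^{p})$ with $g_{v}'(0)\neq 0$ in $R_{v}$, so $H(E,v)=g_{v}'(0)$ is a nonzero element of $R_{v}$ at every $v$; in particular the global section $A$ is not identically zero. Its divisor of zeros is therefore $\mathrm{div}(A)=\sum_{v\in C}h_{E,v}\,(v)$, and taking degrees yields
\[
\sum_{v\in C}h_{E,v}=\deg\,\mathrm{div}(A)=\deg(\omega^{\otimes(p-1)})=(p-1)\chi(S),
\]
which is the claim. The only genuinely computational ingredient is the first step, namely the explicit change-of-variables verification that $H(E,v)$ transforms with weight $p-1$; granting this, the lemma reduces to the identification $\deg\omega=\chi(\mathcal{O}_{S})$ and a one-line Riemann--Roch degree count on $C$.
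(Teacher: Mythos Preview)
Your proof is correct and is the conceptual version of what the paper does more concretely. The paper fixes a single Weierstrass model $E^{(v_0)}$ and views its discriminant $\Delta$ and Hasse invariant $H(E)$ as rational functions on $C$; the transformation laws give $\ord_v\Delta=\ord_v\Delta_v+12n_v$ and $\ord_v H(E)=h_{E,v}+(p-1)n_v$ for the same integers $n_v$, and summing over $v$ (using $\sum_v\ord_v f=0$ for $f\in K(C)^\times$) eliminates the $n_v$'s and produces the identity. In line-bundle language, the paper is choosing a rational trivialization of $\omega$ via the fixed model, turning your section $A$ of $\omega^{\otimes(p-1)}$ into the rational function $H(E)$ and computing $\deg\omega$ via the fact that $\Delta$ is a nowhere-vanishing section of $\omega^{\otimes 12}$.

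What your route buys is that it isolates the two inputs cleanly---the weight-$(p-1)$ transformation law for the Hasse invariant, and the identity $\deg\omega=\chi(\mathcal{O}_S)$---without passing through the discriminant as an intermediary. The paper's route is more elementary in that it avoids invoking the Hodge bundle or any Leray/duality machinery, working entirely with valuations of explicit functions; it also makes transparent why the argument is characteristic-free (apart from the assumption $p>3$ needed for the Tate-algorithm identification $e(F_v)=\ord_v\Delta_v$). Both arguments use ordinariness only to ensure the Hasse invariant is not identically zero.
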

\begin{proof}
For any place $v$ in $K(C)$ we fix a minimal model $E^{(v)}$ of $E$ at $v$ with Hasse invariant $H(E,v)$. Let $\Delta\in K(C)$ be the discriminant and let $H(E)\in K(C)$ denote the Hasse invariant of one arbitrarily chosen model $E^{(v_{0})}$ at $v_{0}$. We denote by $\Delta_{v}$ the minimal discriminant of $E$ at $v$. For each $v$ there exists an integer $n_{v}$ such that
\begin{equation}\label{eq:local_delta_trans}
\ord_{v}(\Delta)=\ord_{v}(\Delta_{v})+12n_{v}.
\end{equation}
From \cite[Ka-29]{Katz_p_adic} it follows that
\begin{equation}\label{eq:local_Hasse_trans}
\ord_{v}(H(E))=\ord_{v}(H(E,v))+(p-1)n_{v}. 
\end{equation}
Elements $\Delta$ and $H(E)$ correspond to functions $\Delta, H(E):C\rightarrow\mathbb{P}^{1}$ and hence $\sum_{v\in C}\ord_{v}(H(E))=\sum_{v\in C}\ord_{v}(\Delta)=0$. Summation over all $v$ combined with \eqref{eq:local_delta_trans} and \eqref{eq:local_Hasse_trans} implies that
\[\frac{(p-1)\sum_{v\in C}\ord_{v}\Delta_{v}}{12}=\sum_{v\in C}h_{E,v}.\]
To finish the proof we apply $12\chi(S)=\sum_{v\in C}e(F_{v})=\sum_{v\in C}\ord_{v}\Delta_{v}$.
\end{proof}

We generalise \cite[Lemma 5.6]{Silverman_divisibility} to the case of positive characteristic. We note that a similar lemma can be obtained in the number field case, cf. \cite{Stange}.
\begin{lemma}\label{lemma:order_char_p}
Let $E$ be an ordinary elliptic curve over $K(C)$, field of characteristic $p$. Let $\{D_{nP}\}_{n\in\mathbb{N}}$ be an elliptic divisibility sequence attached to a point $P$ in $E(K(C))$ of infinite order. Let $v$ denote a place in $K(C)$. Let $m(v)$ be a positive integer defined as follows
\[m(v):=\min\{n\geq 1: \ord_{v}(D_{nP})\geq 1 \}.\]
If $h_{E,v}\leq p-1$, then for all $n\geq 1$ the following equality
\begin{equation*}
 \ord_{v}D_{nP}=\left\{\begin{array}{ll}
                 p^e\ord_{v}D_{m(v)P}+\frac{p^e -1}{p-1}h_{E,v} & ,m(v)\mid n\\
                 0 & ,m(v)\nmid n
                \end{array}\right.
\end{equation*}
holds for $e=v_{p}(\frac{n}{m(v)})$.

Let $k\geq\lceil\log_{p} (\frac{p+(p-1)^2\chi(S)}{2p-1})\rceil$ be an integer. For $h_{E,v}\geq p$ and for all $n\geq 1$ the following equality
\begin{equation*}
 \ord_{v}D_{nP}=\left\{\begin{array}{ll}
                 p^{e}\ord_{v}D_{m(v)P}+\delta(e) & ,m(v)\mid n,\ e\leq k\\
                 p^{e}\ord_{v}D_{m(v)P}+\frac{p^{e-k} -1}{p-1}h_{E,v}+p^{e-k}\delta(k) & ,m(v)\mid n,\ e> k\\
                 0 & ,m(v)\nmid n
                \end{array}\right.
\end{equation*}
holds for $e=v_{p}(\frac{n}{m(v)})$. Function $\delta(e)$ depends on $P$ and $v$ and satisfies the estimates for $e\geq 1$
\[p\cdot \frac{p^{e}-1}{p-1}\leq \delta(e)\leq p^{2e}m(v)^2 \heightE{P}+\frac{1}{2}\chi(S)-p^{e}.\]
\end{lemma}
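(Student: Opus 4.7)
The strategy is to transport the computation into the formal group $\widehat{E}^{(v)}$ and analyze iterates of $\widehat{[p]}_{v}$. The case $m(v)\nmid n$ is immediate: $nP$ then fails to reduce to $O$ at $v$, so $\ord_{v}D_{nP}=0$. For $m(v)\mid n$, I write $n/m(v)=p^{e}n_{0}$ with $\gcd(n_{0},p)=1$. Because $n_{0}\in R_{v}^{\times}$, the formal multiplication $\widehat{[n_{0}]}$ is an automorphism of $\widehat{E}^{(v)}$ and preserves the valuation of the formal parameter, so the computation reduces to analyzing $s_{e}:=\ord_{v}D_{p^{e}m(v)P}$. The ordinary hypothesis gives $\widehat{[p]}_{v}(T)=g(T^{p})$ with $g(T)=H(E,v)T+c_{2}T^{2}+\ldots\in R_{v}[[T]]$ and $v(c_{i})\geq 0$, so for any $t$ with $v(t)=s\geq 1$ one has
\[\ord_{v}\widehat{[p]}_{v}(t)=\min_{i\geq 1}\bigl(v(c_{i})+ips\bigr)\geq ps+\min(h_{E,v},\,ps),\]
with equality to $ps+h_{E,v}$ precisely when the $i=1$ term is the unique minimum.

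Under the tame hypothesis $h_{E,v}\leq p-1$, one has $ps\geq p>h_{E,v}$ for every $s\geq 1$, so the minimum is uniquely at $i=1$ at every stage and the recursion $s_{e+1}=ps_{e}+h_{E,v}$ iterates from $s_{0}=\ord_{v}D_{m(v)P}\geq 1$ to yield the closed form $s_{e}=p^{e}s_{0}+\tfrac{p^{e}-1}{p-1}h_{E,v}$, which is the first assertion.

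In the wild case $h_{E,v}\geq p$, I set $\delta(e):=s_{e}-p^{e}s_{0}$. The displayed inequality only gives $s_{e+1}\geq ps_{e}+\min(h_{E,v},ps_{e})\geq ps_{e}+p$, so $\delta(e+1)\geq p\,\delta(e)+p$ and unwinding produces the lower bound $\delta(e)\geq p\cdot\tfrac{p^{e}-1}{p-1}$. In particular $s_{k}\geq p^{k}+p\cdot\tfrac{p^{k}-1}{p-1}=\tfrac{p^{k}(2p-1)-p}{p-1}$. The threshold $p^{k}\geq(p+(p-1)^{2}\chi(S))/(2p-1)$ is chosen precisely so that, combined with the global bound $h_{E,v}\leq(p-1)\chi(S)$ supplied by Lemma~\ref{lem:Valutions_of_h_E_v}, one obtains $s_{k}\geq h_{E,v}$ and hence $ps_{k}>h_{E,v}$. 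From step $k$ onwards the tame regime applies and iterating yields $s_{e}=p^{e-k}s_{k}+\tfrac{p^{e-k}-1}{p-1}h_{E,v}$ for $e>k$; substituting $s_{k}=p^{k}s_{0}+\delta(k)$ rearranges into the formula stated in the lemma.

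For the upper bound on $\delta$, I use the trivial inequality $s_{e}\leq\deg D_{p^{e}m(v)P}$ and combine the Shioda identity \eqref{eq:Shioda_height} with Lemma~\ref{lemma:inequality2_alt} to get $\deg D_{nP}\leq n^{2}\heightE{P}+\tfrac{1}{2}\chi(S)$. Substituting $n=p^{e}m(v)$ and using $s_{0}\geq 1$ gives $\delta(e)\leq p^{2e}m(v)^{2}\heightE{P}+\tfrac{1}{2}\chi(S)-p^{e}$. The main obstacle will be the calibration of $k$: a lower bound on $s_{k}$ strong enough to dominate the a priori unknown local value $h_{E,v}$ must be produced, and this is exactly what the global identity $\sum_{v}h_{E,v}=(p-1)\chi(S)$ supplies, making the threshold explicit and uniform in the data.
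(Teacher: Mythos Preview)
Your proposal is correct and follows essentially the same approach as the paper's proof: pass to the formal group via the isomorphism $E_{1}(K(C)_{v})\cong\widehat{E}(\mathcal{M}_{v})$, use that $\widehat{[n_{0}]}$ preserves valuations for $p\nmid n_{0}$, analyze the recursion $s_{e+1}$ in terms of $s_{e}$ via the leading term of $\widehat{[p]}_{v}$, invoke Lemma~\ref{lem:Valutions_of_h_E_v} to bound $h_{E,v}\leq(p-1)\chi(S)$ and calibrate the threshold $k$, and finally obtain the upper bound on $\delta(e)$ from \eqref{eq:Shioda_height} combined with Lemma~\ref{lemma:inequality2_alt}. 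Your write-up is in fact slightly more explicit than the paper's about why the $i=1$ term dominates in the tame regime and about the arithmetic of the threshold inequality, but the logical skeleton is identical.
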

\begin{proof}
Let $E(K(C))_{v,r}$ denote the set \[\{P\in E(K(C)): \ord_{v}\sigma_{P}^{*}\overline{O} \geq r\}\cup \{\mathcal{O}\}.\] It follows from its definition that $E(K(C))_{v,r}$ is a subgroup of $E(K(C))$. Number $\ord_{v}D_{nP}$ equals $\max\{r\geq 0: nP\in E(K(C))_{v,r}\}$. We consider the completion $K(C)_{v}$ of field $K(C)$ with respect to $v$, with integer ring $\mathcal{R}_{v}$ and maximal ideal $\mathcal{M}_{v}$. 
Suppose that $d_{0}:=\ord_{v}D_{m(v)P}$ and $d:=\ord_{v}D_{nP}\geq 1$. The subgroups $\{E(K(C))_{v,r}\}_{r\geq 1}$ form a nested sequence so 
\[\GCD(m(v),n)P\in E(K(C))_{v,\min\{d_{0},d\}}.\]
Minimality of $m(v)$ implies that $m(v)\leq \GCD(m(v),n)$, hence $m(v)\mid n$. 

By \cite[Chap.VII, Prop. 2.2]{Silverman_arithmetic} there exists an isomorphism
\[i_{v}:E_{1}(K(C)_{v})\rightarrow \widehat{E}(\mathcal{M}_{v})\]
given by $(x,y)\rightarrow -x/y$ and where $E_{1}(K(C)_{v})$ is the kernel of reduction at $v$ defined in \cite[Chap.VII]{Silverman_arithmetic}. We note that the group $E(K(C))_{v,1}$ is a subgroup of $E_{1,v}(K(C)_{v})$. For an integer $n$ coprime to $p$ and $P\in E(K(C))_{v,1}$ we have
\[\ord_{v}(i_{v}(nP))=\ord_{v}(i_{v}(P)).\]
Assume that $\ord_{v}(h_{E,v})\leq p-1$. It follows that $\ord_{v}(i_{v}(pP))=h_{E,v}+p\ord(i_{v}(P))$. By iteration we obtain
\[\ord_{v}(i_{v}(nP))=p^{e}\ord_{v}(i_{v}(P))+h_{E,v}(1+\ldots +p^{e-1})\]
where $e=v_{p}(n)$. 

For $\ord_{v}(h_{E,v})\geq p$ and for any $P \in E(K(C))_{v,1}$ we have $\ord_{v}(i_{v}(pP))\geq p+p\ord(i_{v}(P))$. After $e$ iterations this implies that
\[\ord_{v}(i_{v}(p^{e}P))\geq p\cdot \frac{p^{e}-1}{p-1}+p^{e}\ord(i_{v}(P)).\]
The formal group homomorphism $\widehat{[p]}_{v}$ satisfies $\ord_{v}(\widehat{[p]}_{v}(T))=h_{E,v}+p\ord_{v}(T)$ for $T$ such that $\ord_{v}(T)>h_{E,v}$. Lemma \ref{lem:Valutions_of_h_E_v} implies that $h_{E,v}\leq (p-1)\chi(S)$. If $e$ is greater than $k$, then we have
\[p^{e}+\frac{p^{e}-1}{p-1}\cdot p > (p-1)\chi(S).\]
Thus $\ord_{v} i_{v}(p^{e}P)=p^{e}\ord_{v}(i_{v}(P))+h_{E,v}(1+\ldots +p^{e-k-1})+\delta(k)$ where $\delta(k)=\ord_{v}i_{v}(p^{k}P)-p^{k}\ord_{v}i_{v}(P)$. 

For any $e\leq k$ we define $\delta(e)=\ord_{v} i_{v}(p^{e}P)-p^{e}\ord_{v}i_{v}(P)$. It is clear that $\delta(e)\geq p\cdot \frac{p^{e}-1}{p-1}$. For the upper bound we observe that
\[p^{2e} m(v)^2\heightE{P}+\frac{1}{2}\chi(S)\geq \ord_{v}D_{p^{e}m(v)P} =\ord_{v}D_{nP}\]
by property \eqref{eq:Shioda_height} and Lemma \ref{lemma:inequality2_alt}. Since $\ord_{v}D_{m(v)P}\geq 1$, the upper bound follows by replacing $P$ by $m(v)P$ in the definition of $\delta(e)$.
\end{proof}

\begin{definition}\label{def:tame_wild_def}
Let $E$ be an ordinary elliptic curve over a function field $K(C)$ of prime characteristic $p$. We say that $E$ is \emph{tame}, when for all places $v$ we have $h_{E,v}\leq p-1$. Otherwise we say that $E$ is \emph{wild}.
\end{definition}

If $\charac K(C)=p> 0$ we apply Lemma \ref{lemma:order_char_p} instead of \cite[Lemma 5.6]{Silverman_divisibility}. Under assumption that $D_{nP}$ has no primitive valuations it follows that
\begin{align*}
  D_{nP}&=\sum_{v\in \Supp D_{nP}}(\ord_{v}D_{nP})\: (v)\\
  &=\sum_{\substack{v\in \Supp D_{nP}\\ m(v)<n}}(\ord_{v}D_{nP})\: (v) + \sum_{v \in \Supp D_{nP}^{\textrm{new}}}(\ord_{v} D_{nP}^{\textrm{new}})\: (v)\\
  &=\sum_{\substack{v\in \Supp D_{nP}\\ m(v)<n}}(\ord_{v}D_{nP})\: (v) \quad(\textrm{no primitive valuations})\\
  &=\sum_{\substack{v\in \Supp D_{nP}\\ m(v)<n}}(p^{v_{p}(\frac{n}{m(v)})}\ord_{v}D_{m(v)P})\: (v)+\underbrace{\sum_{\substack{v\in \Supp D_{nP}\\ m(v)<n}}f(E,P,n,v)\: (v)}_{W(E,P,n)}\\
  &\leq \sum_{\substack{m\mid n\\ m<n}}\sum_{v\in C}(p^{v_{p}(\frac{n}{m})}\ord_{v}D_{mP})\: (v)+ W(E,P,n)\\
  &= \sum_{\substack{m\mid n\\ m<n}}p^{v_{p}(\frac{n}{m})}D_{mP}+ W(E,P,n).\\
\end{align*}
Function $f(E,P,n,v)$ is defined as the difference 
\[f(E,P,n,v)=\ord_{v}D_{nP}-p^{v_{p}(\frac{n}{m(v)})}\ord_{v}D_{m(v)P}.\]
We can summarize the computations above in the following corollary.
\begin{corollary}
Let $p>3$ be a prime number. Let $E$ be an ordinary elliptic curve over $K(C)$ and let $P$ be a point of infinite order on $E$. Assume $n$ is such that $D_{nP}$ is a divisor without primitive valuations. When $p\nmid n$, then
\begin{equation*}
 D_{nP}\leq \sum_{\substack{m\mid n\\ m<n}} D_{mP}.
\end{equation*}
When $\charac K(C)=p$, $p\mid n$, $n=n_{0}p^{e}$ and $p\nmid n_{0}$, then
\begin{equation}\label{eq:fund_ineq_char_p}
 D_{nP}\leq \sum_{\substack{m\mid n\\ m<n}}p^{v_{p}(\frac{n}{m})}D_{mP}+W(E,P,n)
\end{equation}
\end{corollary}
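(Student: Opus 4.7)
The plan is to formalise the chain of (in)equalities displayed in the paragraph immediately preceding the corollary. The argument divides naturally into two cases according to whether $p$ divides $n$, and in both cases it reduces to Lemma~\ref{lemma:order_char_p} applied at each place $v \in \Supp D_{nP}$, combined with the non--primitivity hypothesis that forces $m(v) < n$ for every such $v$.

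For the first case, $p \nmid n$, I would observe that every $v \in \Supp D_{nP}$ has $m(v) \mid n$, so $p \nmid m(v)$ as well, and hence the exponent $e := v_{p}(n/m(v))$ vanishes. Plugging $e = 0$ into Lemma~\ref{lemma:order_char_p} yields $\ord_{v} D_{nP} = \ord_{v} D_{m(v)P}$ in both the tame and the wild branch: in the tame branch the correction $\frac{p^{0}-1}{p-1} h_{E,v}$ is zero, and in the wild branch $\delta(0) = 0$ by the very definition $\delta(e) = \ord_{v} i_{v}(p^{e}P) - p^{e}\ord_{v} i_{v}(P)$. Grouping the places $v$ according to the value $m(v)$ and then enlarging the indexing set from $\{m(v) : v \in \Supp D_{nP}\}$ to the set of all proper divisors $m \mid n$ (which can only increase the right-hand side, since each $D_{mP}$ is effective) gives $D_{nP} \leq \sum_{m \mid n,\,m<n} D_{mP}$.

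For the second case, $p \mid n$, Lemma~\ref{lemma:order_char_p} together with the definition of $f(E,P,n,v)$ provides, at every $v \in \Supp D_{nP}$, the decomposition $\ord_{v} D_{nP} = p^{v_{p}(n/m(v))}\,\ord_{v} D_{m(v)P} + f(E,P,n,v)$. Summing over the support, regrouping the first summand by $m = m(v)$, and extending to all proper divisors $m \mid n$ as in the previous case rewrites the main term as $\sum_{m \mid n,\,m<n} p^{v_{p}(n/m)} D_{mP}$; the remaining contribution is, by definition, $W(E,P,n)$. This yields \eqref{eq:fund_ineq_char_p}.

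The only real subtlety is the enlargement of the indexing set: for each proper divisor $m \mid n$ one needs $\sum_{v:\,m(v)=m} \ord_{v} D_{mP}\,(v) \leq D_{mP}$, which is immediate because $D_{mP} = \sum_{v} \ord_{v} D_{mP}\,(v)$ is an effective divisor, so passing to a superset of places can only add non--negative terms. Once this bookkeeping is made explicit, both cases follow directly from Lemma~\ref{lemma:order_char_p}, so essentially all of the substantive work has already been carried out in establishing that lemma.
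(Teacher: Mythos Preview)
Your proposal is correct and follows exactly the approach of the paper: the corollary is explicitly introduced there as a summary of the displayed chain of (in)equalities, and your write-up simply unpacks that chain, handling the case $p\nmid n$ by noting that the exponent $e=v_{p}(n/m(v))$ vanishes so Lemma~\ref{lemma:order_char_p} collapses to $\ord_{v}D_{nP}=\ord_{v}D_{m(v)P}$, and the case $p\mid n$ by invoking the definition of $f(E,P,n,v)$ and $W(E,P,n)$. Your remark on the enlargement of the indexing set makes explicit the only step the paper leaves implicit.
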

We apply the degree function to \eqref{eq:fund_ineq_char_p}. If $n$ is such that $D_{nP}$ has no primitive divisors and $p\mid n$ ($p>3$), then
\begin{align*}
  \overline{nP}.\overline{O}\leq \sum_{\substack{m\mid n\\ m<n}}p^{v_{p}(\frac{n}{m})}\overline{mP}.\overline{O}+\deg W(E,P,n).
\end{align*}
Now we redo the computations from characteristic $0$
\begin{align*}
&n^2\heightE{P} = \heightE{nP}=\frac{1}{2}\pairing{nP}{nP}\\
& = \dotprod{nP}{O}+\chi(S)-\frac{1}{2}\sum_{v\in B}c_{v}(nP,nP)\\
&\leq \sum_{\substack{m\mid n\\ m<n}}p^{v_{p}(\frac{n}{m})}\overline{mP}.\overline{O}+\underbrace{\deg W(E,P,n)}_{C_{3}(n,p,P)}+\chi(S)-\underbrace{\left(\frac{1}{2}\sum_{v\in B}c_{v}(nP,nP)\right)}_{C_{1}(n,P)}\\
&= \sum_{\substack{m\mid n\\ m<n}}p^{v_{p}(\frac{n}{m})}\left(\frac{1}{2}\pairing{mP}{mP}-\chi(S)+\frac{1}{2}\sum_{v\in B}c_{v}(mP,mP) \right)\\
&+ C_{3}(n,p,P)+\chi(S)- C_{1}(n,P)\\
&=\frac{1}{2}\pairing{P}{P}\summy p^{v_{p}(\frac{n}{m})} m^2 -\chi(S)\summy p^{v_{p}(\frac{n}{m})}\\
&+\underbrace{\frac{1}{2}\summy p^{v_{p}(\frac{n}{m})}\sum_{v\in B}c_{v}(mP,mP)}_{C_{2}(n,p,P)}+C_{3}(n,p,P)+\chi(S)-C_{1}(n,P)\\
&=\heightE{P}(\sigma_{2}^{(p)}(n)-n^2)-\chi(S)(d^{(p)}(n)-2)+C_{2}(n,p,P)\\
&\quad+C_{3}(n,p,P)-C_{1}(n,P)
\end{align*}

\begin{lemma}\label{lemma:fundamental_inequality2}
Let $p>3$ be a prime and let $\charac K(C)=p$. Let $P$ be a point of infinite order in $E(K(C))$ and let $n>1$ and assume $D_{nP}$ is not primitive. When $p\nmid n$ then
\begin{equation*}
n^2\heightE{P}\leq \heightE{P}(\sigma_{2}(n)-n^2)+C_{2}(n,P)
\end{equation*}
When $p\mid n$ then
\begin{equation*}
n^2\heightE{P}\leq \heightE{P}(\sigma_{2}^{(p)}(n)-n^2)+C_{2}(n,p,P)+C_{3}(n,p,P)
\end{equation*}
\end{lemma}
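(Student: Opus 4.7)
The plan is to observe that the lemma is essentially a clean-up of the long chain of (in)equalities already displayed in the paragraphs immediately preceding the statement. In both cases the strategy is the same as in the characteristic $0$ Lemma \ref{lemma:inequality1}: one starts from an exact expression for $n^{2}\heightE{P}$ via Shioda's height formula \eqref{eq:Shioda_height}, bounds $\dotprod{nP}{O}$ using the hypothesis that $D_{nP}$ has no primitive valuation, and then discards the terms that are manifestly non-positive.

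For the case $p\nmid n$, I would first invoke the first part of the corollary that precedes the lemma, which says $D_{nP}\leq \sum_{m\mid n,\, m<n} D_{mP}$. Taking degrees gives $\dotprod{nP}{O}\leq \summy \dotprod{mP}{O}$, and substituting this into $n^{2}\heightE{P}=\dotprod{nP}{O}+\chi(S)-\tfrac{1}{2}\sum_{v\in B} c_{v}(nP,nP)$ together with the same formula applied to each $mP$ reproduces verbatim the computation from Section \ref{sec:char_0_section} leading to \eqref{equation:fundamental_inequality}. This yields
\[
n^{2}\heightE{P}\leq \heightE{P}(\sigma_{2}(n)-n^{2})-\chi(S)(d(n)-2)+C_{2}(n,P)-C_{1}(n,P).
\]
Since $n>1$ forces $d(n)\geq 2$, since $\chi(S)>0$ (our standing hypothesis that $\pi$ is not smooth), and since $C_{1}(n,P)\geq 0$ by the non-negativity of the correction terms displayed in Figure \ref{fig:corrections}, the two subtracted quantities $\chi(S)(d(n)-2)$ and $C_{1}(n,P)$ are both non-negative and may simply be dropped, giving the first displayed inequality.

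For the case $p\mid n$ the situation is identical, but one uses the second part of the same corollary (which rests on Lemma \ref{lemma:order_char_p}) to obtain $D_{nP}\leq \sum_{m\mid n,\, m<n} p^{v_{p}(n/m)} D_{mP}+W(E,P,n)$; this replaces the plain divisor sum with its $p$-weighted version and introduces the extra degree term $C_{3}(n,p,P)=\deg W(E,P,n)\geq 0$. Carrying the same Shioda-formula manipulation through, with $d(n),\sigma_{2}(n)$ replaced by their weighted analogues $d^{(p)}(n),\sigma_{2}^{(p)}(n)$, is exactly the multi-line computation already displayed just above the lemma. The endpoint is
\[
n^{2}\heightE{P}\leq \heightE{P}(\sigma_{2}^{(p)}(n)-n^{2})-\chi(S)(d^{(p)}(n)-2)+C_{2}(n,p,P)+C_{3}(n,p,P)-C_{1}(n,P),
\]
and the same non-negativity remarks (note $d^{(p)}(n)\geq d(n)\geq 2$ for $n>1$) let us discard $\chi(S)(d^{(p)}(n)-2)$ and $C_{1}(n,P)$ to obtain the second displayed inequality.

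The proof therefore has essentially no hard step: the heavy lifting sits in Lemma \ref{lemma:order_char_p} (which controls $\ord_{v}D_{nP}$ in terms of $\ord_{v}D_{m(v)P}$ up to the formal-group defect encoded in $W(E,P,n)$) and in Shioda's explicit height formula. The only subtlety worth highlighting in the write-up is that $C_{1}(n,P)\geq 0$ and $d^{(p)}(n)\geq 2$, since without those sign observations one cannot legitimately drop the terms; everything else is pure book-keeping that has already been performed in the lead-up to the statement.
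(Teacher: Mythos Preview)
Your proposal is correct and follows essentially the same approach as the paper: both arguments simply take the long chain of (in)equalities derived just before the lemma and discard the non-negative terms $\chi(S)(d^{(p)}(n)-2)$ and $C_{1}(n,P)$, noting that $d^{(p)}(n)\geq 2$ for $n>1$ and $\chi(S)>0$. Your write-up is somewhat more explicit about why the $p\nmid n$ case reduces to the characteristic~$0$ computation, but the content is the same.
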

\begin{proof}
For $n$ coprime with $p$ Lemma \ref{lemma:order_char_p} implies that our inequalities reduce to the situation known from characteristic $0$. Assume now that $p\mid n$. Since $n>1$ it is always true that $d^{(p)}(n)\geq 2$, the factor $\chi(S)$ is always positive and the terms in $C_{1}(n,p,P)$ are also non-negative by their definition and the lemma follows.
\end{proof}

We need to establish some crude estimates of $C_{2}(n,p,P)$ and $C_{3}(n,p,P)$. 
\begin{lemma}\label{lem:C2_C3_estimates}
Let $p>3$ be a prime and let $\charac K(C)=p$. Let $P$ be a point of infinite order in $E(K(C))$ and let $n>1$ and assume $D_{nP}$ is not primitive. We obtain the estimate
\begin{equation}\label{eq:inequality_char_p_2}
C_{2}(n,p,P)\leq \frac{3}{2}\chi(S)\cdot(d^{(p)}(n)-1). 
\end{equation}
\end{lemma}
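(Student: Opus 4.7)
The plan is to unpack the definition of $C_{2}(n,p,P)$, pull the uniform bound of Lemma \ref{lemma:inequality2_alt} out of the inner sum, and recognize what remains as the definition of $d^{(p)}(n)$ with the $m=n$ term excluded.

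First I would recall that
\[
C_{2}(n,p,P)=\frac{1}{2}\summy p^{v_{p}(n/m)}\sum_{v\in B}c_{v}(mP,mP),
\]
and that Lemma \ref{lemma:inequality2_alt}, which applies in characteristic $p>3$, gives the \emph{uniform} (in the multiplier $k$) estimate $\sum_{v\in B}c_{v}(kP,kP)\le 3\chi(S)$. Applying this with $k=m$ for each divisor $m\mid n$ with $m<n$ yields
\[
C_{2}(n,p,P)\le \frac{3}{2}\chi(S)\summy p^{v_{p}(n/m)}.
\]

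Next I would identify the remaining sum. By the definition of $d^{(p)}$ in Section \ref{sec:arithm},
\[
d^{(p)}(n)=\sum_{m\mid n}p^{v_{p}(n/m)},
\]
and the term contributed by $m=n$ is $p^{v_{p}(1)}=1$. Hence $\sum_{m\mid n,\,m<n}p^{v_{p}(n/m)}=d^{(p)}(n)-1$, and substituting gives exactly the desired bound
\[
C_{2}(n,p,P)\le \frac{3}{2}\chi(S)\bigl(d^{(p)}(n)-1\bigr).
\]

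There is no genuine obstacle here: the key input, a bound on $\sum_{v\in B}c_{v}(kP,kP)$ that is independent of $k$, is already supplied by Lemma \ref{lemma:inequality2_alt} (whose proof in turn exploits the combinatorial constraints coming from the Euler characteristic formula \eqref{eq:euler_formula} and the component contributions in Figure \ref{fig:corrections}). All that is needed on top of that is the bookkeeping identifying the divisor sum with $d^{(p)}(n)-1$. The only mild subtlety is to note that the hypothesis $p>3$ is exactly what allows us to invoke Lemma \ref{lemma:inequality2_alt}, whose statement requires $\operatorname{char}K(C)\ne 2,3$; under this hypothesis the estimate is uniform across all additive and multiplicative fibre types appearing in Figure \ref{fig:corrections}, so no case analysis is required at this stage.
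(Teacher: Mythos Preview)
Your proposal is correct and follows exactly the approach the paper intends: the paper's own proof is the single sentence ``We apply Lemma \ref{lemma:inequality2_alt} to prove the inequality \eqref{eq:inequality_char_p_2},'' and what you have written is precisely the unpacking of that sentence---bounding each inner sum by $3\chi(S)$ and recognising the remaining divisor sum as $d^{(p)}(n)-1$.
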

\begin{proof}
We apply Lemma \ref{lemma:inequality2_alt} to prove the inequality \eqref{eq:inequality_char_p_2}. 
\end{proof}
To get a uniform result we have to estimate the sum $W(E,P,n)$ independently of $n$. To achieve this we prove a technical lemma.
\begin{lemma}\label{lem:supersing_char_at_fibers}
Let $E$ and $P$ be given. Let $v$ denote a place in $K(C)$ and assume $h_{E,v}>0$. 

Then one of the cases holds
\begin{itemize}
 \item $E$ at $v$ has good reduction and then $p\nmid m(v)$.
 \item $E$ at $v$ has additive reduction and then $m(v)\mid 12p$.
 \item $E$ at $v$ has multiplicative reduction and $h_{E,v}>0$ cannot both occur.
\end{itemize}
\end{lemma}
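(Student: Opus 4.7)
The plan is to analyze each possible Kodaira type of the fibre $F_{v}$ separately, combining the structure of the formal group $\widehat{E}^{(v)}$ with information on the component group $G(F_{v})$. In each case I use that $m(v)$ coincides with the order of $P$ in the quotient $E(K(C)_{v})/E_{1}(K(C)_{v})$: from the intersection interpretation $\ord_{v}(D_{nP})=(\sigma_{nP}.\overline{O})_{v}$, the condition $\ord_{v}(D_{nP})\geq 1$ is equivalent to $nP\in E_{1}(K(C)_{v})$.

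For multiplicative reduction, the formal group $\widehat{E}^{(v)}$ is a twist of the formal multiplicative group $\widehat{\mathbb{G}}_{m}$ over $R_{v}$ and becomes isomorphic to $\widehat{\mathbb{G}}_{m}$ over the unramified quadratic extension that splits the reduction. The direct calculation $\widehat{[p]}(T)=(1+T)^{p}-1$ yields the coefficient of $T^{p}$ equal to the unit $1$; since $\ord_{v}H(E,v)$ is preserved under unramified base change, $h_{E,v}=0$, contradicting the hypothesis and ruling out multiplicative reduction.

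For good reduction, the reduction of $\widehat{E}^{(v)}$ modulo $\mathcal{M}_{v}$ is the formal group of the smooth reduced curve $\tilde{E}$ over $K$, and $h_{E,v}>0$ is exactly the condition that $\tilde{E}$ is supersingular. For a supersingular elliptic curve over an algebraically closed field of characteristic $p$ there is no nonzero $p$-torsion. Since $m(v)$ equals the order of the reduction $\tilde{P}$ of $P$ in $\tilde{E}(K)$, if $p$ divided $m(v)$ the cyclic subgroup $\langle\tilde{P}\rangle$ would contain a nonzero $p$-torsion point, which is impossible; hence $p\nmid m(v)$.

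For additive reduction, the smooth part $\tilde{E}^{\mathrm{ns}}$ of the special fibre is isomorphic to the additive group $\mathbb{G}_{a}$, so $E_{0}(K(C)_{v})/E_{1}(K(C)_{v})\cong(K,+)$ has exponent $p$, while $|G(F_{v})|$ divides $12$ by Figure \ref{fig:group_components}. The exact sequence
\[0\rightarrow E_{0}(K(C)_{v})/E_{1}(K(C)_{v})\rightarrow E(K(C)_{v})/E_{1}(K(C)_{v})\rightarrow G(F_{v})\rightarrow 0\]
shows that $E(K(C)_{v})/E_{1}(K(C)_{v})$ has exponent dividing $12p$, and therefore $m(v)\mid 12p$. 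The most delicate point is the multiplicative case, which relies on identifying $\widehat{E}^{(v)}$ explicitly as a twist of $\widehat{\mathbb{G}}_{m}$; the good and additive cases then reduce to standard facts about supersingular reduction and about the N\'{e}ron model of additive Kodaira fibres together with the component group orders listed in Figure \ref{fig:group_components}.
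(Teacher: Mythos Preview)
Your argument is correct and follows the same three-case structure as the paper. The good and additive cases are essentially identical to the paper's treatment, only phrased more explicitly through the filtration $E_{1}\subset E_{0}\subset E$ and the resulting exact sequence, whereas the paper argues directly that some $kP$ with $k\in\{1,2,3,4\}$ lands on the identity component and is then killed by $p$.

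The one genuine difference is the multiplicative case. You invoke the isomorphism (after an unramified quadratic extension) $\widehat{E}^{(v)}\cong\widehat{\mathbb{G}}_{m}$ and compute $\widehat{[p]}(T)=(1+T)^{p}-1=T^{p}$ in characteristic $p$, so the leading coefficient is a unit and $h_{E,v}=0$. The paper instead writes down the Tate curve $E_{q}$ explicitly, checks via $c_{4},c_{6}$ that it is a minimal model, and then appeals to Katz--Mazur~12.4.2 for $h_{E_{q},v}=0$, together with the invariance of $h_{E,v}$ under admissible coordinate changes. Your route is shorter and more conceptual; the paper's route is more self-contained since it does not rely on the (standard but nontrivial) fact that the formal group at a place of multiplicative reduction is a twist of $\widehat{\mathbb{G}}_{m}$---which of course is itself a consequence of Tate uniformization. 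Note also that since $K=\overline{K}$ in the paper's setup, multiplicative reduction is automatically split, so the passage to a quadratic extension is in fact unnecessary here.
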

\begin{proof}
Assume first that $E$ has good reduction at $v$. The assumption $h_{E,v}>0$ implies that locally at $v$ the fibre $E_{v}$ satisfies $E_{v}[p]=0$ by \cite[Chap.V, Thm. 3.1]{Silverman_arithmetic}. If $p\mid m(v)$, then $(m(v)/p)P$ would already meet the zero section at $v$ contradicting the minimality of $m(v)$. 

If $v$ is of additive reduction, then from Kodaira classification of bad fibres, cf. \cite[Chap. IV, Table 4.1]{Silverman_book} it follows that there exists an integer $k\in\{1,2,3,4\}$ such that the point $kP$ hits the component of zero at $v$. Either $kP$ is zero locally at $v$ or $pkP$ is zero. It implies that $m(v)\mid 12p$. 

Let $t$ be a formal variable and consider the series with coefficients in $\mathbb{Z}[[t]]$ as in \cite{Tate_param}
\begin{align*}
b_{2}(t)&=5\sum_{n=1}^{\infty}\frac{n^3 t^n}{1-t^n} = 5t+45t^2+140t^3+\ldots\\
b_{3}(t)&=\sum_{n=1}^{\infty}\left(\frac{7n^5+5n^3}{12}\right)\frac{t^n}{1-t^n}=t+23t^2+154t^3+\ldots\\
\Delta(t)&=b_{3}+b_{2}^2+72b_{2}b_{3}-432b_{3}^2+64b_{2}^{3}=t\prod_{n=1}^{\infty}(1-t^n)^{24}\\
j(t)&=\frac{(1+48b_{2})^3}{\Delta}=\frac{1}{t}(1+744t+196884t^2+\ldots)
\end{align*}

Finally, let $E$ at $v$ have multiplicative reduction. The normalised $v$-adic norm of $j(E)$ is greater than $1$. There exists a parameter $q\in \mathcal{M}_{v}$ such that $j(E)=j(q)$ (\cite[\S 3, VII]{Roquette_P_Analytic}) and the curve
\[E_{q}: y^2+xy=x^3-b_{2}(q)x+b_{3}(q)\]
has $j$--invariant equal to $j(q)$, has discriminant $\Delta(q)$ and is an elliptic curve over $K(C)_{v}$. It follows that 
\begin{align*}
c_{4}(E_{q})&=1+240\sum_{n=1}^{\infty}q^n\sum_{m \mid n}m^3,\\
c_{6}(E_{q})&=-1+504\sum_{n=1}^{\infty}q^n\sum_{m \mid n}m^5.\\
\end{align*}
It implies that the Weierstrass model $E_{q}$ is minimal at $v$ and the curves $E$ and $E_{q}$ are isomorphic over some extension $L$ of $K(C)_{v}$. The isomorphism corresponds to a change of coordinates between a minimal Weierstrass model of $E$ (with coordinates $x'$ and $y'$) and $E_{q}$ with $x\mapsto u^2 x'+r$, $y\mapsto u^3 y'+u^2 s x'+t$ where $u,s,t$ belong to the ring of integers of $L$. We have also $\ord_{v}(u)=0$ so the equality $h_{E_{q},v}=h_{E,v}$ holds by \cite[Ka-29]{Katz_p_adic}. By \cite[Thm. 12.4.2]{Katz_Mazur} we have $h_{E_{q},v}=0$, which contradicts our assumption $h_{E,v}>0$.
\end{proof}

For the next two lemmas assume that $E$ is an ordinary elliptic curve over $K(C)$, field of characteristic $p>3$. Let $\{D_{nP}\}_{n\in\mathbb{N}}$ be an elliptic divisibility sequence attached to a point $P$ in $E(K(C))$ of infinite order and let $S\rightarrow C$ be an elliptic surface corresponding to $E$. We denote by $e$ the $p$--valuation $v_{p}(n)$ of $n$.
\begin{lemma}\label{lemma:tame_case}
Let $E$ be tame. Then
\[\deg W(E,P,n)\leq (p^{e}-1)\chi(S).\]
\end{lemma}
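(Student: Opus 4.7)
The plan is to unpack the definition of $W(E,P,n)$ using Lemma \ref{lemma:order_char_p}, then bound the result via the global identity in Lemma \ref{lem:Valutions_of_h_E_v}. Recall that $W(E,P,n) = \sum_{v} f(E,P,n,v)\,(v)$ where $f(E,P,n,v) = \ord_{v} D_{nP} - p^{v_{p}(n/m(v))}\ord_{v}D_{m(v)P}$ and the sum ranges over $v \in \Supp D_{nP}$ with $m(v) < n$. A preliminary observation is that if $v \in \Supp D_{nP}$, then $m(v) \mid n$ (this is exactly the minimality argument in the proof of Lemma \ref{lemma:order_char_p}), so for each such $v$ we may write $e' := v_{p}(n/m(v)) \leq e$.

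Next, because $E$ is tame, the hypothesis $h_{E,v} \leq p-1$ is satisfied at every place, so the first case of Lemma \ref{lemma:order_char_p} applies uniformly. It gives
\begin{equation*}
\ord_{v} D_{nP} = p^{e'} \ord_{v} D_{m(v)P} + \frac{p^{e'} - 1}{p-1}\, h_{E,v},
\end{equation*}
which collapses $f(E,P,n,v)$ to $\frac{p^{e'}-1}{p-1} h_{E,v}$. Thus $W(E,P,n)$ is supported only at those places where $h_{E,v} > 0$ and $e' \geq 1$, and
\begin{equation*}
\deg W(E,P,n) \;=\; \sum_{\substack{v\,:\,m(v)\mid n\\ m(v)<n}} \frac{p^{e'(v)} - 1}{p-1}\, h_{E,v}.
\end{equation*}

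Using the monotonicity $e'(v) \leq e$ gives $\frac{p^{e'(v)}-1}{p-1} \leq \frac{p^{e}-1}{p-1}$ for every relevant $v$. Extending the summation to all places $v \in C$ (the extra terms are non-negative) and applying Lemma \ref{lem:Valutions_of_h_E_v} yields
\begin{equation*}
\deg W(E,P,n) \;\leq\; \frac{p^{e}-1}{p-1} \sum_{v \in C} h_{E,v} \;=\; \frac{p^{e}-1}{p-1}\cdot (p-1)\chi(S) \;=\; (p^{e}-1)\chi(S),
\end{equation*}
which is exactly the desired bound.

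The only step that requires attention is the justification that tameness is used globally: one must check that Lemma \ref{lemma:order_char_p}'s first-case formula is the correct replacement for $\ord_{v}D_{nP}$ at every $v$ appearing in $W$, and that places outside the support contribute zero (either because $e'(v) = 0$ or because $h_{E,v} = 0$). Both are immediate from the setup, so there is no real obstacle; the proof is essentially an algebraic identity followed by the global summation from Lemma \ref{lem:Valutions_of_h_E_v}.
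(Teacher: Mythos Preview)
Your proof is correct and follows exactly the same approach as the paper: bound $f(E,P,n,v)\leq \frac{p^{e}-1}{p-1}h_{E,v}$ using the tame case of Lemma \ref{lemma:order_char_p}, then sum over all places and apply Lemma \ref{lem:Valutions_of_h_E_v}. You have simply filled in more detail (the observation $e'(v)\leq e$ and the explicit extension of the sum to all $v\in C$) than the paper's two-line argument.
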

\begin{proof}
In the tame situation we have $f(E,P,n,v)\leq\frac{p^e -1}{p-1}h_{E,v}$. Combination of this equality with Lemma \ref{lem:Valutions_of_h_E_v} proves the statement.
\end{proof}

Let $\mathcal{R}=\mathcal{R}(P,n)=\{v:v\in\Supp D_{nP},\  m(v)<n\}$. Denote by $\Sigma_{g}$ and $\Sigma_{a}$ the set of places of respectively good and bad additive reduction of $E$. Let $\mathcal{R}_{g}=\mathcal{R}\cap\Sigma_{g}$ and $\mathcal{R}_{a}=\mathcal{R}\cap\Sigma_{a}$. Let $\mathcal{S}$ denote the set of places $v$ in $K(C)$ such that $h_{E,v}>0$. Let $\Sigma_{g}^{s}=\Sigma_{g}\cap \mathcal{S}$ and $\Sigma_{a}^{s}=\Sigma_{a}\cap \mathcal{S}$.
\begin{lemma}\label{lemma:wild_case}
Let $E$ be wild and
let $M$ denote $\max\{144p^2,\max_{v\in\mathcal{R}_{g}\cap\mathcal{S}} m(v)^{2}  \}$. The following estimates hold
for any $n$ and $P$ of infinite order
\begin{itemize}
	\item[(i)] For $v_{p}(n)\leq \lceil\log_{p} (\frac{p+(p-1)^2\chi(S)}{2p-1})\rceil$ we have
	\[\deg W(E,P,n)\leq (p^{e}-1)\chi(S) +\chi(S)p^{2e}\heightE{P}M+\frac{1}{2}\chi(S)^2 .\]
	\item[(ii)] For $v_{p}(n)>\lceil\log_{p} (\frac{p+(p-1)^2\chi(S)}{2p-1})\rceil$ we have
	\[\deg W(E,P,n)\leq \chi(S)\left((p^{e}-1)+p^{e-k}(1 +(p^{2k}M\heightE{P}+\frac{1}{2}\chi(S)))\right).\]
\end{itemize}

\end{lemma}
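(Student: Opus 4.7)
The plan is to decompose $W(E,P,n)=\sum_{v\in\mathcal{R}\cap\mathcal{S}} f(E,P,n,v)\,(v)$ into a \emph{tame part} (places with $0<h_{E,v}\le p-1$) and a \emph{wild part} (places with $h_{E,v}\ge p$). By Lemma~\ref{lemma:order_char_p}, $f(E,P,n,v)=0$ whenever $h_{E,v}=0$, so only $v\in\mathcal{S}$ contribute. The global backbone throughout will be Lemma~\ref{lem:Valutions_of_h_E_v}, which gives $\sum_v h_{E,v}=(p-1)\chi(S)$; this both bounds $\sum_{v\text{ tame}}h_{E,v}\le(p-1)\chi(S)$ and shows that the number of wild places is at most $(p-1)\chi(S)/p\le\chi(S)$.

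For the tame contribution I would apply the first case of Lemma~\ref{lemma:order_char_p} to get $f(E,P,n,v)=\tfrac{p^{e'}-1}{p-1}h_{E,v}$ with $e'=v_p(n/m(v))\le e$, and sum; this produces at most $(p^e-1)\chi(S)$, which accounts for the $(p^e-1)\chi(S)$ term present in both (i) and (ii).

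For the wild contribution, the role of Lemma~\ref{lem:supersing_char_at_fibers} is decisive: it excludes multiplicative reduction from $\mathcal{S}$ entirely, forces $m(v)\mid 12p$ at additive places (so $m(v)^2\le 144p^2$), and leaves only good places in $\mathcal{R}_g\cap\mathcal{S}$ with unconstrained $m(v)$. Hence $m(v)^2\le M$ at every wild $v$. In case (i), $v_p(n)=e\le k$, so Lemma~\ref{lemma:order_char_p} gives $\delta(e')\le p^{2e'}m(v)^2\heightE{P}+\tfrac{1}{2}\chi(S)\le p^{2e}M\heightE{P}+\tfrac{1}{2}\chi(S)$; multiplying by the $\le\chi(S)$ wild places and adding the tame bound yields the claimed estimate. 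In case (ii), for every wild $v$ the value $f(E,P,n,v)$ is dominated by $\tfrac{p^{e-k}-1}{p-1}h_{E,v}+p^{e-k}\bigl(p^{2k}M\heightE{P}+\tfrac{1}{2}\chi(S)\bigr)$; summing via the two devices above ($\sum h_{E,v}\le(p-1)\chi(S)$ for the first summand and the $\chi(S)$-count for the second), combining with the tame bound, and applying the crude replacement $(p^{e-k}-1)\chi(S)\le p^{e-k}\chi(S)$, produces the second estimate.

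The main obstacle is the bookkeeping at wild additive-reduction places where $p\mid m(v)$: the local exponent $e'=v_p(n/m(v))$ drops below the global $e=v_p(n)$, so one must verify that the corresponding $\delta(e')$ is still absorbed by the bound indexed by $e$ in case (i) and by $k$ in case (ii). Since the upper bound on $\delta(e')$ grows through its dominant $p^{2e'}$ factor, and $p^{e-k}\ge 1$ in case (ii), this absorption is routine once the decomposition is set up.
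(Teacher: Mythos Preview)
Your proposal is correct and follows essentially the same route as the paper: split $\deg W(E,P,n)$ into the tame contribution (bounded via $\sum_v h_{E,v}=(p-1)\chi(S)$ from Lemma~\ref{lem:Valutions_of_h_E_v}) and the wild contribution (bounded using the $\delta$--estimates of Lemma~\ref{lemma:order_char_p}, the count $\#\{v:h_{E,v}\ge p\}\le\tfrac{p-1}{p}\chi(S)\le\chi(S)$, and the $m(v)^2\le M$ bound coming from Lemma~\ref{lem:supersing_char_at_fibers}). Your explicit bookkeeping of the local exponent $e'=v_p(n/m(v))$ versus the global $e=v_p(n)$ is in fact more careful than the paper's own proof, which silently identifies the two; the absorption you describe is exactly what is needed to justify that step.
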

\begin{proof}
From Lemma \ref{lemma:order_char_p} we can split the expression $\deg W(E,P,n)$ into two parts and estimate them separately.

\begin{align*}
\deg W(E,P,n)&=\sum_{v\in\mathcal{R}\cap\mathcal{S}}f(E,P,n,v)\\
&=\sum_{h_{E,v}<p}f(E,P,n,v)+\sum_{h_{E,v}\geq p}f(E,P,n,v)\\
&\leq (p^{e}-1)\chi(S)+\sum_{h_{E,v}\geq p}f(E,P,n,v).
\end{align*}
The last inequality follows from $f(E,P,n,v)\leq\frac{p^e -1}{p-1}h_{E,v}$ for $h_{E,v}<p$ and Lemma \ref{lem:Valutions_of_h_E_v}. Put $k=\lceil\log_{p} (\frac{p+(p-1)^2\chi(S)}{2p-1})\rceil$ and assume that $e=v_{p}(n)\leq k$. It follows that
\[f(E,P,n,v)\leq p^{2e}m(v)^2 \heightE{P}+\frac{1}{2}\chi(S)-p^{e}\]
for $v$ such that $h_{E,v}\geq p$. By Lemma \ref{lem:Valutions_of_h_E_v} there is at most $\frac{p-1}{p}\chi(S)$ such different places $v$. By Lemma \ref{lem:supersing_char_at_fibers} they can be only of good or additive reduction. Hence
\begin{align*}
\sum_{h_{E,v}\geq p}f(E,P,n,v)&\leq \frac{p-1}{p}\chi(S)p^{2e}\heightE{P}(\max\{\max_{v\in\Sigma_{a}^{s}}m(v)^{2},\max_{v\in\mathcal{R}_{g}\cap\mathcal{S}} m(v)^{2}  \})\\
&+\frac{p-1}{p}\chi(S)(\frac{1}{2}\chi(S)-p^{e}).
\end{align*}
By Lemma \ref{lem:supersing_char_at_fibers} it follows that $\max_{v\in\Sigma_{a}^{s}}m(v)\leq 12p$, hence
\[\sum_{h_{E,v}\geq p}f(E,P,n,v)\leq \chi(S)p^{2e}\heightE{P}M+\frac{1}{2}\chi(S)^2. \]
Assume now that $e> k$. We have the inequality
\[f(E,P,n,v)\leq \frac{p^{e-k} -1}{p-1}h_{E,v}+p^{e-k}\delta(k)\]
where $\delta(k) \leq p^{2k}m(v)^2 \heightE{P}+\frac{1}{2}\chi(S)-p^{k}$.
It implies that
\begin{align*}
\sum_{h_{E,v}\geq p}f(E,P,n,v)&\leq (p^{e-k}-1)\chi(S) +\\
&\frac{p-1}{p}\chi(S)p^{e-k}(p^{2k}M\heightE{P}+\frac{1}{2}\chi(S)-p^{k})
\end{align*}
or in simplified form
\[\sum_{h_{E,v}\geq p}f(E,P,n,v)\leq p^{e-k}\chi(S) +p^{e-k}\chi(S)(p^{2k}M\heightE{P}+\frac{1}{2}\chi(S)).\]
\end{proof}
\begin{remark}
We observe that the bound $\lceil\log_{p} (\frac{p+(p-1)^2\chi(S)}{2p-1})\rceil$ approaches $1$ as $p\rightarrow\infty$ independently of $\chi(S)$.
\end{remark}

\begin{theorem}\label{thm:main_theorem_tame}
Let $E$ be an elliptic curve over $K(C)$ of positive characteristic $p>3$ with at least one bad fibre. Assume that $E$ is tame.  Let $\pi:S\rightarrow C$ be the attached elliptic fibration.  Let $P$ be a point of infinite order on $E$. Let $p^{r}$ be the inseparable degree of the $j$--map of $E$. There exists an explicit constant $N=N(g(C),p,r)$ which depends only on the genus of $C$, $p$ and $r$ such that for all $n\geq N$ the divisor $D_{nP}$ has a primitive valuation.
\end{theorem}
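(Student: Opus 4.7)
My plan is to combine the fundamental inequality of Lemma \ref{lemma:fundamental_inequality2} with the arithmetic estimates of Section \ref{sec:arithm}, the tame bound of Lemma \ref{lemma:tame_case}, and the height lower bound of Lemma \ref{lem:height_est_char_p_1}. Assume $D_{nP}$ has no primitive valuation and split into two cases according to whether $p$ divides $n$.

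In the case $p \nmid n$, Lemma \ref{lemma:fundamental_inequality2} together with $\sigma_{2}(n) < \zeta(2) n^{2}$ from \eqref{eq:sigma_2_estimate} and Lemma \ref{lemma:inequality2_alt} (valid since $p > 3$) yields
\[ n^{2}(2-\zeta(2))\,\heightE{P} \leq \tfrac{3}{2}\chi(S)\bigl(d(n)-1\bigr). \]
Applying Lemma \ref{lem:height_est_char_p_1} I obtain $\heightE{P} \geq c(g(C),p,r)\,\chi(S)$, so the factor $\chi(S)$ cancels and I am left with $n^{2} \leq A(g(C),p,r)\, d(n) \leq A(g(C),p,r)\, n$, bounding $n$ by an explicit function of $g(C), p, r$.

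In the case $p \mid n$, write $n = n_{0}p^{e}$ with $p \nmid n_{0}$. Using Lemma \ref{lemma:fundamental_inequality2} with $\sigma_{2}^{(p)}(n) \leq (1+1/p)\zeta(2)\,n^{2}$ from Proposition \ref{prop:arithm_estimates}, the estimate $C_{2}(n,p,P) \leq \tfrac{3}{2}\chi(S)\bigl(d^{(p)}(n)-1\bigr)$ of Lemma \ref{lem:C2_C3_estimates}, and the tame bound $C_{3}(n,p,P) = \deg W(E,P,n) \leq (p^{e}-1)\chi(S)$ of Lemma \ref{lemma:tame_case}, I set $\alpha_{p} = 2 - (1+1/p)\zeta(2)$. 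Since $p \geq 5$ gives $(1+1/p)\zeta(2) < 2$, one has $\alpha_{p} > 0$. Cancelling $\chi(S)$ via Lemma \ref{lem:height_est_char_p_1} yields
\[ n^{2}\,\alpha_{p}\,c \leq \tfrac{3}{2}\bigl(d^{(p)}(n)-1\bigr) + (p^{e}-1). \]
Substituting the explicit formula $d^{(p)}(n) = \frac{p^{e+1}-1}{p-1}\,d(n_{0})$ from Proposition \ref{prop:arithm_estimates}, dividing by $p^{e}$, and using $d(n_{0}) \leq n_{0}$ gives
\[ n_{0}^{2}\,p^{e}\,\alpha_{p}\,c \leq \frac{3p}{2(p-1)}\,n_{0} + 1, \]
which forces both $n_{0}$ and $p^{e}$, hence $n$, to be bounded by an explicit constant depending only on $g(C), p, r$. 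Taking $N$ to be the maximum of the bounds from the two cases completes the argument.

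The delicate step is twofold: first, verifying $\alpha_{p} > 0$ for $p \geq 5$, which is built into the hypothesis $p > 3$ and the factor $1+1/p$ arising from Proposition \ref{prop:arithm_estimates}; and second, tracking the dependence of the constant $c$ on $(g(C),p,r)$ through the dichotomy of Lemma \ref{lem:height_est_char_p_1}. Both are routine once the inequalities are laid out, because the tame hypothesis ensures that $C_{3}(n,p,P)$ is controlled by $p^{e}\chi(S)$ alone, allowing $\chi(S)$ to cancel completely against the height lower bound and leaving only the asserted parameters $g(C), p, r$.
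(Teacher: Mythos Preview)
Your proof is correct and follows essentially the same route as the paper's: the same case split on $p\mid n$, the same chain of lemmas (Lemma~\ref{lemma:fundamental_inequality2}, Proposition~\ref{prop:arithm_estimates}, Lemma~\ref{lem:C2_C3_estimates}, Lemma~\ref{lemma:tame_case}, Lemma~\ref{lem:height_est_char_p_1}), and the same key constant $\alpha_{p}=\theta(p)=2-(1+1/p)\zeta(2)>0$ for $p\geq 5$. The only cosmetic difference is in the final arithmetic: the paper keeps $d^{(p)}(n)$ in terms of $d(n)$ and uses $d(n)\leq n^{0.988}$ to bound $n$ directly, whereas you rewrite $d^{(p)}(n)=\tfrac{p^{e+1}-1}{p-1}\,d(n_{0})$ (equivalent, since $d(n)=(e+1)d(n_{0})$), apply the trivial bound $d(n_{0})\leq n_{0}$, and bound $n_{0}$ and $p^{e}$ separately.
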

\begin{proof}
Let $n$ be an integer such that the divisor $D_{nP}$ has no primitive valuation. Let us first assume that $p\nmid n$. Lemma \ref{lemma:fundamental_inequality2} implies that
\[n^2\heightE{P}\leq \heightE{P}(\sigma_{2}(n)-n^2)+C_{2}(n,P).\]
We combine the estimate $\sigma_{2}(n)<\zeta(2)n^2$ with the estimate from Lemma \ref{lemma:inequality2}. The only difference with characteristic zero case is that we apply now the height estimate for $\heightE{P}$ from Lemma \ref{lem:height_est_char_p_1}. It follows that there exists an effective constant $N_{1}=N_{1}(g(C),p^{r})$ such that $n\leq N_{1}$.

Let us assume that $p\mid n$. After Lemma \ref{lemma:fundamental_inequality2} we have
\[n^2\heightE{P}\leq \heightE{P}(\sigma_{2}^{(p)}(n)-n^2)+C_{2}(n,p,P)+C_{3}(n,p,P).\]
By Proposition \ref{prop:arithm_estimates} it follows that
\[n^2\heightE{P}\leq \heightE{P}\cdot \left(\left(1+\frac{1}{p}\right)\zeta(2)-1\right)n^2+C_{2}(n,p,P)+C_{3}(n,p,P)\]
and in simplified form
\[\theta(p) n^2\heightE{P}\leq C_{2}(n,p,P)+C_{3}(n,p,P)\]
where by $\theta(p)$ we denote $2-\left(1+\frac{1}{p}\right)\zeta(2)$. We apply Lemma \ref{lem:C2_C3_estimates} and get the bound
\begin{equation*}
\theta(p) n^2\heightE{P}\leq\frac{3}{2}\chi(S)\cdot(d^{(p)}(n)-1)+\deg W(E,P,n).
\end{equation*}
Put $e=v_{p}(n)$. Lemma \ref{lemma:tame_case} implies that $\deg W(E,P,n)\leq (p^e-1)\chi(S)$, hence
\begin{equation*}
\theta(p) n^2\heightE{P}\leq\frac{3}{2}\chi(S)\cdot(d^{(p)}(n)-1)+(p^e-1)\chi(S)
\end{equation*}
and again by Proposition \ref{prop:arithm_estimates} it follows that
\begin{equation*}
\theta(p) n^2\heightE{P}\leq\frac{3}{2}\chi(S)\cdot\left(\frac{p^{e+1}-1}{(e+1)(p-1)}\cdot d(n)-1\right)+(p^e-1)\chi(S)
\end{equation*}
We rearrange the sum and drop several terms to get
\begin{equation*}
\theta(p) n^2\heightE{P}\leq\left(\frac{3}{2} p d(n)+1\right)p^e\chi(S)
\end{equation*}
For $\chi(S)=h_{K(C)}(E)\geq 2\cdot p^r (g(C)-1)$ the inequality
\[\frac{\chi(S)}{\heightE{P}}\leq 10^{18 p^{r}}\]
holds. Hence
\begin{equation*}
\theta(p) n^2\leq \left(\frac{3}{2} p d(n)+1\right) p^e\cdot 10^{18 p^{r}}
\end{equation*}
For $n\geq 19$ we obtain $d(n)\leq n^{\epsilon}$ with $\epsilon=0.988$. Since $p\geq 5$, then $\theta(p)\geq 2 - \frac{\pi^2}{5}>0.026$. We have $n=p^{e}n_{0}$ where $n_{0}$ is coprime to $p$. Finally
\[0.026\cdot n\cdot n_{0}\leq 10^{18p^r}\left(\frac{3}{2}p n^{\epsilon}+1\right).\]
We have $n_{0}\geq 1$ hence $\alpha n\leq \beta n^{\epsilon}+\gamma$ for explicit $\alpha,\beta$ and $\gamma$ that depend on $p$ and $r$ only. Such an inequality can hold only for finitely many $n$. We conclude that there exists a constant $N=N(p,r)$ such that for $n\geq N$ the divisor $D_{nP}$ has a primitive valuation.

For $\chi(S)=h_{K(C)}(E)< 2\cdot p^r (g(C)-1)$ the inequality
\[\frac{\chi(S)}{\heightE{P}}\leq 10^{36 g(C)p^{r}}.\]
holds. In a similar way as above we obtain a bound $N=N(g(C),p,r)$ such that for $n\geq N$ the divisor $D_{nP}$ has a primitive valuation.
\end{proof}
\begin{remark}
We observe that our leading assumption $p>3$ is needed to get a positive lower bound on $\theta(p)$. We leave it as an open question whether it is possible to establish the general result that will incorporate prime characteristics $2$ and $3$.
\end{remark}
Let us assume that $E$ is defined over $K(C)$ where the field of constants $K$ of $K(C)$ is not algebraically closed. For $\charac K(C)=p$ we put $K=\mathbb{F}_{q}$ where $q=p^{s}$ for some positive $s$. We consider a point $P$ in $E(K(C))$. It is possible to construct the fibration $\pi:S\rightarrow C$ such that the generic fibre is $E$ over $K(C)$ and the fibres above $v\in C(\overline{K})$ are defined over the field $k(v)$ which has $\deg v:=[k(v):K]$. 

\begin{theorem}\label{thm:theorem_wild_case}
Let $E$ be an elliptic curve defined over $K(C)$ of characteristic $p>3$ with field of constants $k=\mathbb{F}_{q}$, $q=p^{s}$. Let $E$ be wild. Let $\pi:S\rightarrow C$ be an elliptic fibration attached to $E$ in such a way that the fibres $E_{v}$ above $v\in C(\overline{K})$ of good reduction are defined over $k(v)$.  Take a point $P$ in $E(K(C))$ of infinite order. Let $p^{r}$ be the inseparable degree of the $j$--map of $E$. There exists an explicit constant $N=N(g(C),\chi(S),p,r,s)$ which depends only on the genus of $C$, Euler characteristic $\chi(S)$, $p$, $r$ and $s$ such that for $n\geq N$ the divisor $D_{nP}$ has a primitive valuation.
\end{theorem}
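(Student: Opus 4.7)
The plan is to adapt the proof of Theorem~\ref{thm:main_theorem_tame}, substituting the wild estimate of Lemma~\ref{lemma:wild_case} for Lemma~\ref{lemma:tame_case}. The case $p\nmid n$ is identical to the tame proof and yields a bound depending only on $(g(C),p,r)$. For $p\mid n$, Lemma~\ref{lemma:fundamental_inequality2} gives the master inequality
\[
\theta(p)\,n^2\,\heightE{P}\leq C_2(n,p,P)+C_3(n,p,P),
\]
where $C_2$ is bounded by Lemma~\ref{lem:C2_C3_estimates} and $C_3=\deg W(E,P,n)$ is controlled by Lemma~\ref{lemma:wild_case} in terms of the parameter $M=\max\{144p^2,\ \max_{v\in\mathcal{R}_g\cap\mathcal{S}}m(v)^2\}$. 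The two new tasks are (i) to bound $M$ uniformly in $(p,s,\chi(S))$, and (ii) to handle the self-coupling of $\heightE{P}$ introduced by the wild term in $\deg W$.

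For (i), which is where the assumption $K=\mathbb{F}_q$ enters, fix $v\in\mathcal{R}_g\cap\mathcal{S}$ and let $w$ be the closed point of $C/\mathbb{F}_q$ below $v$, with degree $d_w=[k(w):\mathbb{F}_q]$. By hypothesis the good-reduction fibre $E_v$ is an elliptic curve over $k(v)=\mathbb{F}_{q^{d_w}}$ and $m(v)$ divides $|E_v(k(v))|$; the Hasse--Weil bound therefore gives $m(v)\leq (q^{d_w/2}+1)^2$. On the other hand, the geometric places above $w$ all carry the same integer value $h_{E,\cdot}\geq 1$, so their joint contribution to the identity of Lemma~\ref{lem:Valutions_of_h_E_v} is at least $d_w$, forcing $d_w\leq (p-1)\chi(S)$. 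Combining yields the explicit uniform bound
\[
M\leq M_0:=\max\{144p^2,\ (p^{s(p-1)\chi(S)/2}+1)^4\},
\]
which depends only on $(p,s,\chi(S))$.

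For (ii), substitute $M\leq M_0$ into Lemma~\ref{lemma:wild_case} and write $n=p^e n_0$ with $p\nmid n_0$. Each of the two sub-cases of that lemma rearranges the master inequality into the shape
\[
\bigl(\theta(p)\,n^2-\chi(S)\,A(e,k)\,M_0\bigr)\,\heightE{P}\leq B(n,\chi(S),p),
\]
with $A(e,k)=p^{2e}$ when $e\leq k$ and $A(e,k)=p^{e+k}$ when $e>k$, and with $B$ growing polynomially in $n$ by Proposition~\ref{prop:arithm_estimates} and Lemma~\ref{lem:C2_C3_estimates}. Factoring $n^2=p^{2e}n_0^2$, the coefficient of $\heightE{P}$ becomes $p^{2e}(\theta(p)n_0^2-\chi(S)M_0)$ or $p^{e+k}(\theta(p)p^{e-k}n_0^2-\chi(S)M_0)$, respectively, and is strictly positive once $n_0^2$ crosses the explicit threshold $\chi(S)M_0/\theta(p)$; values of $n_0$ below the threshold give an immediate bound on $n_0$, and in sub-case (ii) additionally force $p^{e-k}\leq \chi(S)M_0/\theta(p)$, hence bound $e$. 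In the positive regime, inserting the lower bound $\heightE{P}\geq c(g(C),p,r)\chi(S)$ from Lemma~\ref{lem:height_est_char_p_1} together with the sub-linear estimate \eqref{eq:d_bound} for $d^{(p)}(n)$ produces an inequality of the form $\alpha n\leq \beta n^{\epsilon}+\gamma$ with $\epsilon<1$ and $\alpha,\beta,\gamma$ explicit functions of $(g(C),\chi(S),p,r,s)$, which has only finitely many solutions.

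The central technical difficulty is precisely this self-coupling of $\heightE{P}$ caused by the wild term in $\deg W$: one must show that the net coefficient $\theta(p)n^2-\chi(S)A(e,k)M_0$ is simultaneously positive and of order $n^2$. This is what forces the decomposition $n=p^e n_0$ and the case split of Lemma~\ref{lemma:wild_case}; the boundedness of the threshold $k$ (which tends to $1$ as $p\to\infty$ by the remark following Lemma~\ref{lemma:wild_case}) guarantees that in sub-case (ii) one has $p^{e-k}\geq p$, stabilising the sign immediately. Once positivity of the coefficient is secured, the argument proceeds verbatim as in the tame proof, producing the explicit constant $N=N(g(C),\chi(S),p,r,s)$.
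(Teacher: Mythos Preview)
Your proposal is correct and follows the same overall strategy as the paper: bound $m(v)$ via the Hasse--Weil estimate combined with the degree bound $\deg v\leq (p-1)\chi(S)$ from Lemma~\ref{lem:Valutions_of_h_E_v}, then insert Lemma~\ref{lemma:wild_case} into the master inequality from Lemma~\ref{lemma:fundamental_inequality2}. The only substantive difference is in how you treat the self-coupling of $\heightE{P}$. The paper simply divides the master inequality by $\heightE{P}$, so that the wild contribution $\chi(S)A(e,k)M_0\heightE{P}$ becomes the constant $\chi(S)A(e,k)M_0$; for $e\leq k$ this is bounded outright by $\chi(S)p^{2k}M_0$ and is absorbed into a constant $\gamma$, while for $e>k$ it equals $\chi(S)p^{k}M_0\cdot p^{e}$ and is absorbed into $\gamma p^{e}$, after which the inequality reads $\theta(p)n^2\leq \tfrac{\chi(S)}{\heightE{P}}(\alpha d(n)+\beta)p^{e}+\gamma p^{e}$ and one finishes exactly as in the tame case. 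No sign analysis is needed. Your rearrangement to the left and case split on positivity also works, but note a small slip: in sub-case $e>k$ the natural threshold is on $p^{e-k}n_0^{2}$, not on $n_0^{2}$ alone (otherwise $n_0$ small does not by itself bound $e$, contrary to what you wrote); once one splits at $p^{e-k}n_0^{2}>2\chi(S)M_0/\theta(p)$ the coefficient is indeed $\geq \tfrac{\theta(p)}{2}n^{2}$ and the remainder of your argument goes through.
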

\begin{proof}
We proceed in a similar way to the proof of Theorem \ref{thm:main_theorem_tame}. Let $n$ be an integer such that the divisor $D_{nP}$ has no primitive valuation. For  $p\nmid n$ we follow the reasoning from the proof of Theorem \ref{thm:main_theorem_tame}. For $p\mid n$ , let $e=v_{p}(n)$. We arrive at the inequality
\begin{equation*}
\theta(p) n^2\heightE{P}\leq\frac{3}{2}\chi(S)\cdot(p^{e+1}\cdot d(n))+\deg W(E,P,n)
\end{equation*}
where $\theta(p)$ is defined as in the proof of Theorem \ref{thm:main_theorem_tame}.
For $v\in C(\overline{K})$ of good reduction the fibre $E_{v}$ is defined over $\mathbb{F}_{q^{\deg v}}$ and the reduction $P_{v}$ of point $P$ at $v$ is an $\mathbb{F}_{q^{\deg v}}$--rational point. From Lemma \ref{lem:Valutions_of_h_E_v} it follows that $\deg v \leq (p-1)\chi(S)$. Hasse--Weil bound \cite[Chap. V, Thm. 1.1]{Silverman_arithmetic} implies that
\[\# E_{v}(\mathbb{F}_{q^{\deg v}})\leq (\sqrt{q^{\deg v}}+1)^2.\]
From the definition of $m(v)$ we have $m(v)=\ord P_{v}$, hence 
\[m(v)\leq (\sqrt{q^{\deg v}}+1)^2\leq (\sqrt{q^{(p-1)\chi(S)}}+1)^2.\]
Let $k=\lceil\log_{p} (\frac{p+(p-1)^2\chi(S)}{2p-1})\rceil$ and suppose $e\leq k$. From Lemma \ref{lemma:wild_case} it follows that
\begin{align*}
\deg W(E,P,n)&\leq (p^{e}-1)\chi(S)\\
+\chi(S)&p^{2e}\heightE{P}\max\left\{144p^2,(\sqrt{q^{(p-1)\chi(S)}}+1)^4 \right\}+\frac{1}{2}\chi(S)^2.
\end{align*}
We conclude that there exist explicit constants $\alpha,\beta$ and $\gamma$ that depend on $\chi(S),p$ and $s$ such that
\[\theta(p)n^2\leq \frac{\chi(S)}{\heightE{P}}(\alpha d(n)+\beta)+\gamma.\]
We bound trivially $d(n)$ by $n$ from above. When we have $\chi(S)\geq 2\cdot p^r (g(C)-1)$ the bound $\frac{\chi(S)}{\heightE{P}}\leq 10^{18 p^{r}}$ holds and the inequality is true only for finitely many $n$ under the assumption $p\geq 5$. There is an explicit constant $N$ which depends on $\chi(S),p,s$ and $r$ such that for $n\geq N$ the divisor $D_{nP}$ has a primitive valuation. For $\chi(S)< 2\cdot p^r (g(C)-1)$ we produce a constant $N$ that depends additionally on $g(C)$. 

Finally, for $e>k$ we find explicit constants $\alpha,\beta,\gamma$ that depend on $\chi(S),p$ and $s$ such that
\[\theta(p) n^2\leq \frac{\chi(S)}{\heightE{P}}(\alpha d(n)+\beta)p^{e}+\gamma p^{e}.\]

For $n\geq 19$ we have $d(n)\leq n^{\epsilon}$ with $\epsilon=0.988$. Now we proceed as in the proof of Theorem \ref{thm:main_theorem_tame}.
\end{proof}

\section{Examples}\label{sec:Examples}
We present several examples where we establish the exact set of non--primitive divisors for concrete elliptic divisibility sequences. The first example deals with an infinite family of curves in characteristic $0$. We prove that as follows from the theorem the constant is absolute and in this case equals $1$, i.e. all divisors are primitive. 

The second example deals with the curve in characteristic $p=7$ where the $j$--map is inseparable. The next three examples indicate what happens when the field $K(C)$ is of positive characteristic and we allow the function $H(E)$ to vanish. We show that there are infinitely many non--primitive divisors in a sequence. They all rely on the fact that the multiplication by $p$ map is inseparable of degree $p^2$.

\begin{example}\label{example:myfamily}
We present now an example where the constant can be explicitly determined for a large family of elliptic curves with base curve $C=\mathbb{P}^{1}$ and $\chi(S)$ unbounded. The computations performed in this example inspired the proof of the general case for characteristic $0$ fields.

Computations in the example are based on \cite{Naskrecki_quadrics}.
Let $f,g,h\in\overline{\mathbb{Q}}[t]$ be polynomials of positive degree without a common root that satisfy $f^2+g^2=h^2$. We define an elliptic curve
\[E_{f,g,h}:y^2=x(x-f^2)(x-g^2)\]
over the function field $\overline{\mathbb{Q}}(t)$. There exists a point $Q=(-g^2,\sqrt{-2}g^2h)$ of infinite order on this curve. In the example we present an explicit argument that for all $n\in\mathbb{N}$ the divisors $D_{nQ}$ are primitive. Note that $\chi(S)=\deg f$ if $\deg g\leq \deg f$ so the Euler characteristic can be made unbounded. We can take for example polynomials \[(f,g,h)=\left(\frac{t^{2m}-1}{2},t^{m},\frac{t^{m}+1}{2}\right)\]
for any $m\in\mathbb{N}$. The equation $E_{f,g,h}$ represents the globally minimal Weierstrass model of the given elliptic curve. Its fibres of bad reduction are above the points $a\in\overline{\mathbb{Q}}$ such that $f(a)=0$ or $g(a)=0$ or $(f^2-g^2)(a)=0$ or $a=\infty$. The correcting terms in the Shioda's height formula are recorded in Table \ref{arr:corr_terms}. We denote by $v_{a}(\eta)$ the order of vanishing of a polynomial $\eta$ at $a$. We also denote $c_{v}(R,R)$ by $c_{v}(R)$. The height $\langle Q,Q\rangle$ equals $\deg f$. By the bilinearity of the height pairing $\langle\cdot,\cdot\rangle$ we know that $\langle k Q, k Q\rangle = k^2\langle Q,Q\rangle$. Application of \eqref{eq:Shioda_height} implies that
\[k^2\langle Q,Q\rangle = 2\deg f+2 \overline{kQ}.\overline{O}-\sum_{\substack{a:\\g(a)=0}} c_{a}(kQ) - c_{\infty}(kQ).\]
For $k$ even the sum $\sum\limits_{\substack{a:\\g(a)=0}}c_{a}(kQ)$ vanishes and for $k$ odd is equal to $\deg g$. Similarly for $2\mid k$ the factor $c_{\infty}(kQ)$ equals $0$ and for $2\nmid k$ it is equal to $\deg f-\deg g$. This follows from the group structure of $G(F_{v})$ for the fibres under consideration. By a simple algebraic manipulation we get the formula for the intersection numbers
\begin{equation*}
\overline{kQ}.\overline{O}=\left\{\begin{array}{ll}
\frac{k^2-2}{2}\deg f & ,2\mid k\\
& \\
\frac{k^2-1}{2}\deg f & ,2\nmid k
\end{array}\right.
\end{equation*}
Now we compute explicitly the constant $N(E_{f,g,h},Q)$. Suppose that $D_{nQ}$ does not have a primitive divisors. Then it follows
\[\dotprod{nP}{O}\leq \summy \dotprod{mP}{O}\]
Suppose $n$ is odd, then
\[\frac{n^2-1}{2}\deg f \leq \sum_{\substack{m\mid n\\ m< n}}\frac{m^2-1}{2}\deg f.\]
This is equivalent to
\begin{equation}\label{eq:sigma_bound}
(d(n)-1)+(n^2-1)\leq \sigma_{2}(n)-n^2.
\end{equation}
The first term on the left side of equation \eqref{eq:sigma_bound} is non--negative and $\sigma_{2}(n)<\zeta(2)n^2$, so
\[n^2 < \frac{1}{2-\zeta(2)}\]
hence $n< 1.68$, so $n=1$. Now we consider the case when $n$ is even. The inequality
\[\frac{n^2-2}{2}\deg f \leq \sum_{\substack{m\mid n\\ m<n}} \overline{m Q}.\overline{O}\]
is equivalent to
\[\left(2d(n)-d\left(n/2^{v_{2}(n)}\right)\right)+2(n^2-2)\leq \sigma_{2}(n).\]
We drop the non-negative term $\left(2d(n)-d\left(n/2^{v_{2}(n)}\right)\right)$. It follows that
\[(2-\zeta(2))n^2\leq  4\]
which can hold only for $n\leq 2$. Now we check by a direct computation that $D_{2Q}$ actually contains primitive valuations:
\[2Q=\left(-\frac{(f^2-g^2)^2}{8h^2}, \frac{\sqrt{-1}(g^2-f^2)(3f^2+g^2)(f^2+3g^2)}{16\sqrt{2}h^3} \right)\]
so the constant $N(E_{f,g,h},Q)$ equals $1$.

\begin{table}
$\begin{array}{cccc}
v & c_{v}(Q) & \textrm{comp}_{v}(Q) & F_{v} \\
\hline
\infty & \deg f - \deg g & 2(\deg f - \deg g) & I_{4(\deg f - \deg g)}\\
a: g(a)=0 & v_{a}(g) & 2 v_{a}(g) & I_{4v_{a}(g)}\\
a: f(a)=0 & 0 & 0 & I_{4v_{a}(g)}\\
a: (f^2-g^2)(a)=0 & 0 & 0 & I_{2v_{a}(g)}
\end{array}$
\vskip5pt
\caption{Correcting terms for a curve with Weierstrass equation $E_{f,g,h}$}\label{arr:corr_terms}
\end{table}
\end{example}
 
\begin{example}
Let $C=\mathbb{P}^{1}$ with parameter $t$ for its function field $K(C)$. Assume that $K=\overline{\mathbb{F}}_{7}$. The curve $E: y^2=x^3-t^3 x+t$ has bad reduction at $t=0$ (type $II$), $t=5$ (type $I_{7}$) and $t=\infty$ (type $III$). The associated elliptic surface $\pi: S\rightarrow C$ satisfies $\chi(S)=1$ and hence $S$ is a rational surface with Picard number equal to $10$. By Shioda--Tate formula \cite[Cor. 5.3]{Shioda_Mordell_Weil} the group $E(\overline{\mathbb{F}}_{7}(t))$ has rank $1$ and  by \cite{Oguiso_Shioda} is generated by $P=(3t+2,2t^2+t+1)$ which has canonical height $\heightE{P}=\frac{1}{2}\langle P,P\rangle = \frac{1}{14}$. The four points $P,2P,3P$ and $4P$ are integral with respect to $t$.
\begin{table}[h!]
\[\begin{array}{c|c}
  n & D_{nP}\\
  \hline
  1 & 0\\
  2 & 0\\
  3 & 0\\
  4 & 0\\
  5 & (4)\\
  6 & (3)\\
  7 & (0)\\
  8 & (\alpha_{1})+(\alpha_{2}) ((t-\alpha_{1})(t-\alpha_{2}) = t^2+6t+4\\
  \ldots & \ldots\\
  14 & (0)+5(\infty)
 \end{array}\]
 \caption{Divisors $D_{nP}$ for small values of $n$}
\end{table}
We prove below that these are the only integral points with respect to $t$ and for all $n\geq 5$ the divisor $D_{nP}$ admits a primitive valuation. Observe that the $j$--invariant of $E$ is a $7$-th power $j=\left(\frac{6t}{t+2}\right)^7$ and its inseparable degree is $7$.

\begin{table}\label{tab:bad_val_example_char_7}
 \[\begin{array}{c|c|c|c|c|}
  v & \textrm{type of } v &  P_{v} & \textrm{Is singular on }E_{v}\textrm{?} & c_{v}(P,P)\\
  \hline
  t=5 & I_{7} & (3,0) & \textrm{yes} & 10/7\\
  t=0 & II & (2,1) & \textrm{no} & 0\\
  t=\infty & III & (0,0) & \textrm{yes} & 1/2
 \end{array}\]
\caption{Reduction $P_{v}$ of point $P$ at places $v$ of bad reduction with reduced curve $E_{v}$}
\end{table}
We check that for $v\neq 0, \infty$ we have $h_{E,v}=0$ and $h_{E,0}=1$, $h_{E,\infty}=5$, and $m(0)=7$, $m(\infty)=14$. Information from Table \ref{tab:bad_val_example_char_7} and knowledge of the component group for each bad fibre allow us to compute
\begin{equation*}
 c_{v}(kP,kP)=\left\{\begin{array}{ll}
                      1/2 & , v=\infty, 2\nmid k\\
                      \frac{(2k\textrm{ mod }7)\cdot (7-(2k\textrm{ mod }7))}{7} & , v=5\\
                      0 & \textrm{, otherwise}
                      \end{array}
 \right.
\end{equation*}
We assume $n>1$ and that $D_{nP}$ has no primitive divisors. From the formula
\begin{equation}\label{eq:divisor_ineq_example_char_7}
D_{nP}\leq\sum_{\substack{m\mid n\\ m<n}}p^{v_{p}(\frac{n}{m})}D_{mP}+ W(E,P,n) 
\end{equation}
and the computations above we can effectively check that for $5\leq n\leq 20000$ the formula does not hold. For $n\geq 20000$ we apply the degree function to \eqref{eq:divisor_ineq_example_char_7} and get
\[0.12n\leq 14\cdot \left(\frac{31}{24}\cdot n^{0.465}+1\right)\]
which is valid only for $n\leq 11998$. So only the divisors $D_{2P},D_{3P}$ and $D_{4P}$ are not primitive and for $n\geq 5$ the divisor $D_{nP}$ always has a primitive valuation. Because $E(\overline{\mathbb{F}}_{7}(t))=\langle P\rangle $, so for any $\overline{\mathbb{F}}_{7}(t)$--rational point $Q$ on $E$ the sequence $D_{nQ}$ contains at most $3$ non--primitive elements.
\end{example}

\begin{example}
Let $p\geq 5$ and pick an elliptic curve $E_{0}:y^2=x^3+\alpha x+\beta$ with $\alpha,\beta\in \mathbb{F}_{p}$ which is supersingular. Consider the field $K(C)=\mathbb{F}_{p}(t)$ of functions of the projective line $C$ over $\mathbb{F}_{p}$ and let $r=t^3+\alpha t+\beta$. The curve $E_{0}^{(r)}: y^2=x^3+\alpha r^2 x+\beta r^3$ over $K(C)$ is a generic fibre of a Kummer K3 surface with $I_{0}^{*}$ fibres at places $t_{0}$ such that $r(t_{0})=0$ or $t_{0}=\infty$.
We always have a point $P=(tr,r^2)$ on this curve
(in fact $\rank E_{0}^{(r)}(\overline{\mathbb{F}}_{p}(t))=4$ because $E_{0}$ is supersingular, cf.\cite[\S 12.7]{Shioda_Schutt}). 
Moreover on $E_{0}$ the $[p]$ multiplication map is inseparable of degree $p^{2}$ and since $E_{0}$ is defined over $\mathbb{F}_{p}$ we have that $[p](x,y)=(x^{p^2},-y^{p^2})$. The curve $E_{0}$ over $\overline{K(C)}$ is isomorphic to $E_{0}^{(r^{d})}$ over $\overline{K(C)}$ via $(x,y)\mapsto (x r^{d},y^{3/2 d})$ for any positive integer $d$. Hence the $[p]$ map on $E_{0}^{(r)}$ satisfies $[p](x,y)=(x^{p^2} r^{1-p^2}, -y^{p^2} r^{(3-3p^2)/2})$. Any $p^{k}$ multiple of the point $P$ on $E_{0}^{(r)}$ is an integral point
\[p^{k}P=(t^{p^{2k}}r,r^{(3+p^{2k})/2}).\]
The sequence $\{D_{p^{k}P}\}_{k\geq 0}$ of divisors has support only at $t=\infty$: $D_{P}=0$ and $D_{p^{k}P}=(p^2-1)(\infty)$ for $k\geq 1$. Hence the sequence $\{D_{nP}\}_{n\geq 1}$ has infinitely many elements that have no primitive valuation. 

There is nothing special about the point $P$ so we can pick any $K(C)$--rational point $Q$ on $E_{0}^{(r)}$ and there will exist again infinitely many divisors $D_{nQ}$ for $n\geq 1$. From our construction it follows that $H(E)=0\in K(C)$.
\end{example}

\begin{example}
Let $E$ be an elliptic curve over $\mathbb{F}_{2}(t)$ with globally minimal Weierstrass equation
\[E:y^2+ty=x^3+x.\]
We consider the point $P=(1,0)$ which is of infinite order in $E(\mathbb{F}_{2}(t))$. Multiplication by $2$ map on $E$ satisfies the equality
\[x([2](x,y))=\frac{1 + x^4}{t^2}.\]
For two polynomials $p$,$s$ in $\mathbb{F}_{2}[t]$ which are coprime and $p/s^2$ is the $x$--coordinate a point $Q$ on $E$ we get
\[x(2Q)=\frac{p^4+s^8}{t^2 s^8}\]
and it is easy to see that $p^4+s^8$ and $t^2 s^8$ are again coprime. We show by induction that for $l\geq 1$
\[x(2^l P)= \frac{\sum\limits_{j=1}^{l-1}t^{\sum_{k=j}^{l-2} 2^{2k+1}}}{t^{\frac{2}{3}(2^{2l-2}-1) }}.\]
So for $l\geq 2$ we have $\Supp D_{2^l P}=\{(0)\}$ and for every $l\ge 3$ the divisor $D_{2^l P}$ is not primitive.
\end{example}

\begin{example}
Let $E$ be an elliptic curve over $\mathbb{F}_{3}(t)$ with globally minimal Weierstrass equation
\[E:y^2+txy=x^3+2t^2x^2+(2t^2+1)x+(2t^2+1).\]
The point $P=(1,0)$ is of infinite order in $E(\mathbb{F}_{3}(t))$. We check that
\[x([3](x,y)) = \frac{1}{(1+t)^4 (2+t)^4} x^9 +\frac{2t^2}{(1+t)(2+t)}.\]
For $l\geq 1$ the divisor $D_{3^l P}$ is supported at $1$ and $2$ and for $l\geq 2$ it is not primitive.
\end{example}

\section*{Acknowledgments}
The author would like to thank Wojciech Gajda for suggesting this research problem and for the hint to the height bounds of \cite{SilvHindry} used at the critical point of the argument. He also thanks Krzysztof Górnisiewicz for helpful remarks, Maciej Radziejewski for his information about the upper bounds on the divisor sum functions and Joseph Silverman for helpful comments. Finally the author thanks an anonymous referee for careful reading of the manuscript and for very helpful suggestions.

\bibliography{bibliography}
\bibliographystyle{amsplain}
\end{document}